\newtheorem{theorem}{Theorem}[section]
\newtheorem*{theoremd*}{Definition/Theorem}
\newtheorem*{theorem1*}{Theorem 1}
\newtheorem*{theorem*}{Theorem 1}
\newtheorem*{problem*}{Problem}
\newtheorem*{question*}{Question}
\newtheorem{definition}{Definition}[subsection]
\newtheorem*{remarks*}{Remarks}
\newtheorem*{claim*}{Claim}
\newtheorem*{remark*}{Remark}
\newtheorem*{hlt*}{Hard Lefschetz Theorem}
\newtheorem*{HRR*}{Hodge-Riemann Bilinear Relations}
\newtheorem*{basisthm*}{Basis Theorem}
\newtheorem*{relbasisthm*}{Relative Basis Theorem}
\newtheorem*{primdecomp*}{Primitive Decomposition Theorem}
\newtheorem{proposition}{Proposition}[subsection]
\newtheorem{lemma}{Lemma}[subsection]
\newtheorem{corollary}{Corollary}[subsection]
\newtheorem*{corollary1*}{Corollary 1}
\newtheorem*{corollary*}{Corollary}
\newcommand{\p}{{\mathbf{p}}}
\newcommand{\y}{{\mathbf{y}}}
\newcommand{\Dc}{\Delta_c^+}
\newcommand{\Dcp}{\Delta_{c'}^-}
\newcommand{\taupp}{\tau_{c+}}
\newcommand{\taupm}{\tau_{c-}}
\newcommand{\dsp}{\displaystyle}
\newcommand{\Res}{\operatorname{Res}}
\newcommand{\Lie}{\operatorname{Lie}}
\newcommand{\Kirc}{\mathcal{K}_{c}}
\newcommand{\Kircp}{\mathcal{K}_{c'}}
\newcommand{\Kir}{\mathcal{K}}
\newcommand{\xymap}{\xymatrixcolsep{5pc}\xymatrixrowsep{1pc}\xymatrix}
\newcommand{\Maps}{\operatorname{Maps}}
\newcommand{\supp}{\operatorname{supp}}
\newcommand{\ind}{\operatorname{ind}}
\newcommand{\sra}{\shortrightarrow}
\newcommand{\Sym}{\operatorname{Sym}}
\newcommand{\R}{{\mathbb R}}
\newcommand{\Z}{{\mathbb Z}}
\newcommand{\N}{{\mathbb N}}
\numberwithin{equation}{section}
\begin{document}

\title[Morse Theory]{Morse Theory on 1-Skeleta}
\author{Chris McDaniel}
\address{Dept. of Math. and Comp. Sci.\\
Endicott College\\
Beverly, MA 01915}
\email{cmcdanie@endicott.edu}






\begin{abstract} 
Guillemin and Zara gave necessary and sufficient conditions under which Morse theoretic techniques could be used to construct an additive basis for the equivariant cohomology of a 1-skeleton that is either $3$-independent or GKM.  We show that their conditions remain valid for all 1-skeleta, $3$-independent, GKM, or otherwise.

\end{abstract}
\maketitle



\section{Introduction}
Let $\Gamma$ be a $d$-valent graph with vertex set $V_\Gamma$ and oriented edge set $E_\Gamma$ (i.e. $pq\in E_\Gamma \Leftrightarrow qp\in E_\Gamma$).  An \emph{axial function} on $\Gamma$ is a function $\alpha\colon E_\Gamma\rightarrow\R^n$ which maps oppositely oriented edges to opposite vectors, maps oriented edges issuing from each single vertex to pairwise linearly independent vectors, and satisfies the following coplanarity condition:  for each oriented edge $pq\in E_\Gamma$ and for any other oriented edge $e$ issuing from $p$ there is a corresponding oriented edge $\theta_{pq}(e)$ issuing from $q$ and a positive scalar $\lambda_{pq}(e)$ such that the difference $\alpha(e)-\lambda_{pq}(e)\alpha(\theta_{pq}(e))$ is collinear with $\alpha(pq)$.  The collection of oriented edge matchings $\theta_{pq}\colon E^p\rightarrow E^q$ ($E^x=$ the oriented edges issuing from vertex $x$) and the collection of positive scalar functions $\lambda_{pq}\colon E^p\rightarrow \R_+$ are called a \emph{connection} ($\theta$) and a \emph{compatibility system} ($\lambda$) for the pair $(\Gamma,\alpha)$, respectively.  The quadruple $(\Gamma,\alpha,\theta,\lambda)$ is called a $d$-valent \emph{1-skeleton} in $\R^n$.  

The \emph{equivariant cohomology} of the 1-skeleton $(\Gamma,\alpha,\theta,\lambda)$ is the set $H(\Gamma,\alpha)$ consisting of maps $f\colon V_\Gamma\rightarrow S\coloneqq\Sym(\R^n)$ such that for any $pq\in E_\Gamma$ the difference $f(q)-f(p)$ is in the ideal of $S$ generated by the linear element $\alpha(pq)$.  Vertex-wise addition and multiplication give $H(\Gamma,\alpha)$ the structure of a graded algebra over the polynomial ring $S$.  In two beautiful papers \cite{GZ1,GZ2}, Guillemin and Zara showed how Morse theoretic techniques could be used to construct a nice $S$-module basis for $H(\Gamma,\alpha)$ called a \emph{generating family} for certain 1-skeleta satisfying something called the \emph{acyclicity axiom}.  Following Guillemin and Zara we say that a 1-skeleton satisfying the acyclicity axiom has the \emph{Morse package} if it admits a generating family.  It turns out that the Morse package for a 1-skeleton is equivalent to the Morse package for its planar subskeleta, called \emph{$2$-slices}.  This is our main result:
\begin{theorem}
\label{thm:GZMain}
Assume $(\Gamma,\alpha,\theta,\lambda)$ satisfies the acyclicity axiom.  Then $(\Gamma,\alpha,\theta,\lambda)$ has the Morse package if and only if every $2$-slice has the Morse package.
\end{theorem}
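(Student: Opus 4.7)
The plan is to tackle the two directions separately. The \emph{only if} direction should follow almost immediately from the definitions. Given a generating family $\{\tau_p\}_{p\in V_\Gamma}$ for $(\Gamma,\alpha,\theta,\lambda)$ and a $2$-slice $\Gamma'\subset\Gamma$, I would restrict each $\tau_p$, for $p\in V_{\Gamma'}$, to $\Gamma'$; the resulting family inherits the required support and leading-term properties with respect to the polarizing direction restricted to the plane spanned by the axial values of $\Gamma'$, giving a generating family for the $2$-slice.

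For the \emph{if} direction, I would fix a generic polarizing covector $\xi\in(\R^n)^*$, orient edges so that $\xi(\alpha(e))>0$ denotes ``upward,'' and proceed by induction on the resulting Morse order on $V_\Gamma$. At each vertex $p$, the Morse class $\tau_p$ is constrained to vanish on vertices strictly below $p$ and to take the value $\prod_{e\in F^-_p}\alpha(e)$ at $p$, where $F^-_p$ denotes the downward edges at $p$; the task is to extend these boundary values to a well-defined element of $H(\Gamma,\alpha)$. This extension problem decomposes edge-by-edge into compatibility conditions of the form $\tau_p(q)-\tau_p(p)\in(\alpha(pq))$, and I would try to solve them one ``ascending flow'' at a time, using the connection $\theta$ and compatibility system $\lambda$ to transport values across incident edges.

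The key idea, which I expect drives the argument, is that each local obstruction lives entirely inside a $2$-slice: for any pair of edges $e,e'$ at a vertex $p$ (one descending, one ascending) the coplanarity axiom pins $\tau_p$'s behaviour on the subskeleton swept out by $e$ and $e'$ to that of the corresponding $2$-slice Morse class. Given the $2$-slice Morse package, such local extensions exist; I would then glue them using the acyclicity axiom to rule out monodromy around closed cycles in the associated flow graph, since acyclicity guarantees that every closed path can be decomposed into $2$-dimensional ``flow rectangles.''

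The main obstacle, I anticipate, will be the coherence step: showing that the locally defined $2$-slice solutions actually agree on overlaps and assemble into a single class in $H(\Gamma,\alpha)$. In the $3$-independent or GKM setting of Guillemin--Zara this is automatic because of the rigidity of the axial function, but in general one must produce an explicit patching argument, essentially an induction over the ``descending'' flag-structure at each vertex, invoking the $2$-slice generating family at each step. Verifying that acyclicity is the precise combinatorial hypothesis needed to close this induction---and thereby promote $2$-dimensional Morse data to $d$-dimensional Morse data---is, I expect, the technical heart of the proof.
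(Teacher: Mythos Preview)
Your treatment of the \emph{only if} direction matches the paper's: restrict a generating family to each $2$-slice and observe that the support and leading-term conditions survive restriction. That part is fine.

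The \emph{if} direction, however, has a genuine gap. You propose a direct inductive construction of $\tau_p$ by extending along ascending edges, with the claim that the obstruction to extension ``lives entirely inside a $2$-slice,'' and that the $2$-slice generating families then furnish local solutions which you glue using acyclicity. You correctly flag the coherence step as the hard part, but you offer no mechanism for it --- and the paper does not supply one either, because it does not proceed this way at all. Two concrete problems: first, the restriction of a would-be $\tau_p$ (degree $\ind_\xi(p)$ in $\Gamma$) to a $2$-slice through $p$ is not the $2$-slice generating class for $p$ (whose degree is the index of $p$ \emph{in the slice}), so the $2$-slice Morse package does not hand you the local piece in the way you suggest; second, when the $1$-skeleton is not $3$-independent, distinct pairs of edges at $p$ can lie in the \emph{same} $2$-slice, so your ``one obstruction per pair'' picture, and the accompanying ``flow rectangle'' decomposition, does not have the shape you assume.

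The paper's route is entirely different and considerably more analytic. It first shows that the $2$-slice Morse package forces $(\Gamma,\alpha,\theta,\lambda)$ to be \emph{straight} (Propositions~\ref{prop:StMor2} and~\ref{prop:Straightness}); straightness is what makes an integral operator $\int_\Gamma$ and cross-sectional integrals $\int_{\Gamma_c}$ exist (Proposition~\ref{prop:StrInt}). One then \emph{defines} the cross-sectional cohomology $H(\Gamma_c)$ by duality against the Kirwan image under $\int_{\Gamma_c}$ (Definition~\ref{def:Hc}), proves surjective restriction maps to the local pieces $H(\Delta_c^\pm)$ (Proposition~\ref{prop:Thm6.1}), and constructs flip-flop maps $\mu^p,\delta^p$ between adjacent cross sections via a residue computation (Lemma~\ref{lem:Flip}). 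These flip-flops, together with the residue characterization of $H(\Gamma,\alpha)$ (Propositions~\ref{prop:R2} and~\ref{prop:StrDuality}), yield surjectivity of the Kirwan map (Proposition~\ref{prop:SurjKirwan}); from a single explicit element of $H(\Gamma_c)$ one then pulls back a weak generating class for each vertex (Lemma~\ref{lem:SurjKMorse}). None of this machinery --- straightness, integral operators, residues, cross-sectional cohomology, Kirwan surjectivity --- appears in your plan, and it is precisely this machinery that replaces the patching argument you were hoping to find.
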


In their important paper \cite{GZ1}, Guillemin and Zara essentially proved Theorem \ref{thm:GZMain} for 1-skeleta that are \emph{$3$-independent}, i.e. for each $p\in V_\Gamma$ and for any three oriented edges $e_1,e_2,e_3$ issuing from $p$ the vectors $\alpha(e_1), \alpha(e_2), \alpha(e_3)$ are linearly independent.  In a subsequent paper \cite{GZ2}, Guillemin and Zara proved Theorem \ref{thm:GZMain} for 1-skeleta satisfying the so-called \emph{GKM condition}, i.e. each of the scalar functions $\lambda_{pq}$ is identically equal to one.  The proof given in their latter paper is subtle and clever, drawing on some localization results they had obtained in an earlier paper \cite{GZ0}.  In this paper we show that the ideas developed by Guillemin and Zara remain valid in the general case, and we use these ideas to give a uniform proof of Theorem \ref{thm:GZMain} for all 1-skeleta, GKM, $3$-independent, or otherwise.  Before getting into the details of our proof we give a bit of background which should justify some of topological jargon used here, and give some motivation for studying these objects.

1-skeleta were first explicitly defined as above by Guillemin and Zara \cite{GZ1} as a combinatorial tool to study \emph{GKM manifolds}, after the seminal work of Goresky-Kottwitz-MacPheson \cite{GKM}.  A GKM manifold is a compact $2d$-dimensional almost complex manifold $M$ with a $T=\left(S^1\right)^n$-action whose zero and one (complex) dimensional orbits consist of finitely many $T$-invariant $S^2$'s, each containing exactly two fixed points.  An axial function for this ``graph'' is then defined by the weights of the isotropy representations of $T$ on the tangent spaces of $M$ at the fixed points, a connection is defined by a $T$-equivariant connection on the tangent bundle of $M$, and smoothness of $M$ guarantees that the constants $\lambda_{pq}(e)$ are always equal to one.  Hence a $2d$-dimensional GKM manifold with an $n$-dimensional torus acting defines a $d$-valent GKM 1-skeleton in $\Lie(T)^*\cong\R^n$.  A remarkable result of Goresky, Kottwitz, and MacPherson then states that if the manifold $M$ satisfies a technical condition called equivariant formality, then its $T$-equivariant cohomology $H_T(M)$ is isomorphic to the equivariant cohomology of its 1-skeleton.  One particularly nice family of equivariantly formal GKM manifolds are the \emph{Hamiltonian GKM manifolds}, i.e. symplectic GKM manifolds equipped with Hamiltonian torus actions.  In a series of three papers \cite{GZ0,GZ1,GZ2}, Guillemin and Zara showed that several topological results on Hamiltonian GKM manifolds have nice combinatorial interpretations on 1-skeleta, even when there is no underlying GKM manifold to speak of.   

For example, let $M$ be a Hamiltonian GKM manifold with torus $T$ acting.  Paraphrasing Guillemin and Zara \cite{GZ1}:  Let $\xi\in\Lie(T)$ be a generic covector, so that the fixed point set of its circle subgroup $H\coloneqq\langle\exp(t\xi)\rangle\subset T$ coincides with that of $T$, and let $\phi\colon M\rightarrow\R$ be its associated Hamiltonian function.  Then $\phi$ is a Morse-Bott function on $M$ whose critical points are exactly the $T$-fixed points $M^T\subset M$.  The equivariant Thom classes of the ``flow-up'' (or unstable) manifolds $\left\{W_p\right\}_{p\in V_\Gamma}$ relative the gradient flow of $\phi$ form a basis for $H_T(M)$ as a module over $H_T(pt)\cong \Sym(\Lie(T)^*)$, called a \emph{generating family}.  For each regular value $c\in\R\setminus \phi(M^T)$ the reduced space $M_c\coloneqq \phi^{-1}(c)/H$ is a Hamiltonian orbifold.  The flip-flop theorem then says that for two regular values separated by a single critical value, say $c<\phi(p)<c'$, the two reduced spaces $M_c$ and $M_{c'}$ are related by a blow up/blow down procedure called a \emph{flip-flop}.  The flip-flop theorem allows one to relate the $T'\coloneqq T/H$-equivariant cohomologies of the reduced spaces of $M$ to each other.  Furthermore the $T$-equivariant cohomology of $M$ is related to the $T'$-equivariant cohomology of its reduced space via the \emph{Kirwan map} $\Kirc\colon H_T(M)\rightarrow H_{T'}(M_c)$, and a theorem of Kirwan says that this map must be surjective. 

By analogy, a 1-skeleton satisfying the acyclicity axiom with fixed covector $\xi\in\left(\R^n\right)^*$ always admits a compatible \emph{Morse function} $\phi\colon V_\Gamma\rightarrow\R$.  A \emph{generating family}, if one exists, is a homogeneous $S$-module basis for $H(\Gamma,\alpha)$ given by Thom classes on the ``flow up'' subgraphs $\left\{\mathcal{F}_p\right\}_{p\in V_\Gamma}$ relative to the acyclic orientation on $\Gamma$ induced by $\xi$.  For any \emph{regular value}, $c\in\R\setminus\phi(V_\Gamma)$ define the \emph{cross section} of $(\Gamma,\alpha,\theta,\lambda)$ at level $c$ to be the pair $\Gamma_c\coloneqq(V_c,E_c)$ where $V_c$ is the set of oriented edges of $\Gamma$ that cross $c$ level, and $E_c$ are the $2$-slices that cross $c$ level.  Intuitively, one should think of the $c$-cross section as the intersection of the 1-skeleton with the $c$-translate of the annihilator hyperplane of $\xi$, $W_\xi\subset\R^n$.  In the case that $(\Gamma,\alpha,\theta,\lambda)$ is $3$-indepdendent with $2$-slices that have the Morse package, it turns out that $\Gamma_c$ is a $(d-1)$-valent graph which inherits a 1-skeleton structure from $(\Gamma,\alpha,\theta,\lambda)$.  In this case, one can then show that these cross sectional 1-skeleta satisfy an analogue of the flip-flop theorem.  Moreover, there is an analogue of the Kirwan map $\Kirc\colon H(\Gamma,\alpha)\rightarrow \Maps(V_c,S_\xi)$ ($S_\xi=\Sym(W_\xi)$) and, again in the case that $(\Gamma,\alpha,\theta,\lambda)$ is $3$-independent with Morse $2$-slices, one can show that its image $\Kirc\left(H(\Gamma,\alpha)\right)$ is equal to $H(\Gamma_c)$, the equivariant cohomology of $\Gamma_c$ with its inherited 1-skeleton structure.  Using these facts, Guillemin and Zara \cite{GZ1} were able to deduce that a $3$-independent 1-skeleton with Morse $2$-slices must have the Morse package itself, which is the hard implication in Theorem \ref{thm:GZMain}.  Without $3$-independence however, the cross section $\Gamma_c$ is not a 1-skeleton, and the cross sectional equivariant cohomology as defined above is not so well behaved.  Hence to carry our analogy over to the general case, one must find a suitable replacement for $H(\Gamma_c)$.  In their remarkable paper \cite{GZ2}, Guillemin and Zara found the ``right'' replacement for $H(\Gamma_c)$ using analogues of integral operators they had introduced and studied for GKM 1-skeleta in an earlier paper \cite{GZ0}. 

In that paper \cite{GZ0}, Guillemin and Zara used an analogue of the Atiyah-Bott-Berline-Vergne localization formula for the integral of an equivariant cohomology class on a Hamiltonian GKM space to define an ``integral operator'' on the equivariant cohomology of a GKM 1-skeleton.  They also introduced the notion of residues to prove a combinatorial analogue of the Jeffrey-Kirwan theorem for GKM 1-skeleta, which gives rise to a ``cross sectional integral operator''.  It turns out that the existence of such integral operators is equivalent to the compatibility system of the 1-skeleton satisfying a kind of ``trivial holonomy'' condition that we call \emph{straightness}.  In particular for a straight 1-skeleton $(\Gamma,\alpha,\theta,\lambda)$ we show that there exist positive constants $\left\{c_p\right\}_{p\in V_\Gamma}$ such that for every $f\in H(\Gamma,\alpha)$ the sum of rational functions
\begin{equation}
\label{eq:integralintro}
\int_\Gamma f\coloneqq \sum_{p\in V_\Gamma}\frac{f(p)}{c_p\prod_{e\in E^p}\alpha(e)}
\end{equation}
is actually a polynomial in $S$, c.f. Proposition \ref{prop:StrInt}.  The map $\int_\Gamma\colon H(\Gamma,\alpha)\rightarrow S[d]$ is called an \emph{integral operator} on $H(\Gamma,\alpha)$.  One of the important observations made by Guillemin and Zara \cite{GZ2} is that a map $h\colon V_\Gamma\rightarrow S$ is in $H(\Gamma,\alpha)$ if and only if 
$$\int_\Gamma f\cdot h \in S \ \ \forall \ f\in H(\Gamma,\alpha).$$

Now assume that $(\Gamma,\alpha,\theta,\lambda)$ satisfies the acyclicity axiom, with $\xi\in\left(\R^n\right)^*$, $\phi\colon V_\Gamma\rightarrow\R$, and $c\in\R\setminus\phi(V_\Gamma)$ fixed, and for each $f\in\Maps(V_c,S_\xi)$ define:
$$\int_{\Gamma_c} f\coloneqq\sum_{e\in V_c}\frac{f(e)}{c_{i(e)}m_e\prod_{\substack{e'\in E^{i(e)}\\ e'\neq e\\}}\rho_e(\alpha(e'))}$$
where $i(e)$ is the initial vertex of $e$, $c_{i(e)}$ is as in \eqref{eq:integralintro}, $m_e=\langle\xi,\alpha(e)\rangle$, and $\rho_e\colon\R^n\rightarrow W_\xi$ is the projection along the $\alpha(e)$ coordinate.  Under these assumptions (i.e. straightness and acyclicity), we prove that for every $h\in H(\Gamma,\alpha)$ we have 
\begin{equation}
\label{eq:integralcintro}
\int_{\Gamma_c}\Kirc(h)=\sum_{\phi(q)<c}\frac{1}{c_q}\Res_\xi\left(\frac{f(p)}{\prod_{e\in E^p}\alpha(e)}\right)
\end{equation}
where $\Res_\xi$ is the residue operator on rational functions introduced by Guillemin and Zara \cite{GZ0}, c.f. Lemma \ref{prop:KirwanIntegral}.  In particular this implies that $\int_{\Gamma_c}\Kirc(h)$ is a polynomial in $S_\xi$ for every $h\in H(\Gamma,\alpha)$.  

Following Guillemin and Zara, one now defines the \emph{cross sectional equivariant cohomology} $H(\Gamma_c)$ to be the set of maps $f\in\Maps(V_c,S_\xi)$ such that 
$$\int_{\Gamma_c}f\cdot\Kirc(h) \in S_\xi \ \ \forall \ h\in H(\Gamma,\alpha).$$
While this definition only makes sense for 1-skeleta that are straight, we show that in fact straightness for $(\Gamma,\alpha,\theta,\lambda)$ is implied by either the Morse package on $(\Gamma,\alpha,\theta,\lambda)$, or the Morse package on its $2$-slices, c.f. Propositions \ref{prop:StMor} and \ref{prop:StMor2}.

The remarkable fact is that an analogue of the flip-flop theorem actually holds for $H(\Gamma_c)$ as defined above.  More precisely, we show that if $(\Gamma,\alpha,\theta,\lambda)$ satisfies the acyclicity axiom and if every $2$-slice of $(\Gamma,\alpha,\theta,\lambda)$ has the Morse package, then for any two regular values separated by a unique critical value, $c<\phi(p)<c'$, there are $S_\xi$-module maps 
\begin{equation}
\label{eq:FlipFlopMaps}
\xymap{H(\Gamma_c)\ar@/^/[r]^-{\mu_p} & H(\Gamma_{c'})\ar@/^/[l]^-{\delta_p}},
\end{equation}
called \emph{flip-flop maps}, c.f. Lemma \ref{lem:Flip}.  Using the flip-flop maps, we prove that the image of the Kirwan map is again equal to $H(\Gamma_c)$, c.f. Proposition \ref{prop:SurjKirwan}.  It turns out that one can then deduce the existence of a generating family for $(\Gamma,\alpha,\theta,\lambda)$ directly from the surjectivity of the Kirwan maps, c.f. Lemma \ref{lem:SurjKMorse}.  

We would like to add that several of the results in this paper were proved for GKM 1-skeleta by Guillemin and Zara, and, once straightness has been established, many of their proofs actually hold verbatim.  Several of these proofs have been reproduced here for the sake of completeness, with references to the original arguments of Guillemin and Zara.  On the other hand there are some new arguments in this paper that give our proof of Theorem \ref{thm:GZMain} a slightly distinctive flavor from that given by Guillemin and Zara, for better or worse.     

This paper is organized as follows.  In Section \ref{sec:Basic} we define the fundamental notions of a 1-skeleton and its equivariant cohomology, and establish some of their properties.  In Section \ref{sec:Proof} we prove Theorem \ref{thm:GZMain}.  In Section \ref{sec:Comments} we give a few concluding remarks regarding 1-skeleta in the plane.   

Unless otherwise stated:  All rings and modules are graded over $\N=\left\{0,1,2,\ldots,\right\}$, all maps are assumed to be graded of degree zero, and $M[i]$ denotes the shifted module $M[i]^j\coloneqq M^{i+j}$.

\section{Definitions}
\label{sec:Basic}
\subsection{1-Skeleta}
A graph $\Gamma$ is a pair consisting of vertices $V_\Gamma$ and oriented edges $E_\Gamma$ by which we mean distinct ordered pairs of vertices where $pq\in E_\Gamma$ if and only if $qp\in E_\Gamma$.  For a given oriented edge $e=pq\in E_\Gamma$, its \emph{initial} vertex is $i(e)\coloneqq p$, its \emph{terminal} vertex is $t(e)\coloneqq q$, and its oppositely oriented counterpart is $\bar{e}\coloneqq qp$.  By the set of oriented edges at $p$, denoted $E^p$, we mean the set of all oriented edges with initial vertex $p$.  The graph is $d$-valent if the cardinality of $E^p$ is equal to $d$ for every $p\in V_\Gamma$.  Unless otherwise stated, all graphs in this paper are connected and have constant valency.  

A \emph{connection} $\theta$ on $\Gamma$ is a collection of bijective maps $\theta_{pq}\colon E^p\rightarrow E^q$ indexed by the set $E_\Gamma$ satisfying $\theta_{pq}(pq)=qp$ and $\theta_{pq}=\theta_{qp}^{-1}$ for each $pq\in E_\Gamma$.  An \emph{axial function} $\alpha$ on $\Gamma$ compatible with $\theta$ is a map $\alpha\colon E_\Gamma\rightarrow\R^n$ satisfying the following axioms:
\begin{enumerate}[{A}1.]
\item ${\dsp \left\{\alpha(e)\left|\right.e\in E^p\right\}}$ is pairwise linearly independent for each $p\in V_\Gamma$
\item $\alpha(pq)=-\alpha(qp)$ for each $pq\in E_\Gamma$
\item For every vertex $p\in V_\Gamma$ and for each pair $e,e'\in E^p$ there exist positive constants $\lambda_e(e')$ such that
$$\alpha(e')-\lambda_e(e')\alpha(\theta_e(e'))\equiv 0 \ \ \ \text{mod} \ \alpha(e)$$
\end{enumerate}
It is convenient to regard the positive constants as a family of functions $\lambda\coloneqq\left\{\lambda_e\colon E^{i(e)}\rightarrow\R_+\right\}_{e\in E_\Gamma}$ called the \emph{compatibility system} for the triple $(\Gamma,\alpha,\theta)$.  We define $\lambda_e(e)\coloneqq 1$.  Note that $\lambda$ is uniquely determined by the triple $(\Gamma,\alpha,\theta)$.  

\begin{definition}
\label{def:1skeleton}
A $d$-valent 1-skeleton in $\R^n$ is a quadruple $(\Gamma,\alpha,\theta,\lambda)$ consisting of a $d$-valent graph $\Gamma$, a connection $\theta$ on $\Gamma$, a compatibility system $\lambda$ on $\Gamma$ compatible with $\theta$, and an axial function $\alpha$ on $\Gamma$ compatible with $\theta$ and $\lambda$.
\end{definition}

A 1-skeleton is called \emph{GKM} if its compatibility system satisfies $\lambda_e\equiv 1$ for each $e\in E_\Gamma$.  A 1-skeleton is called \emph{$k$-independent} if for each $p\in V_\Gamma$ every $k$-subset of vectors $\left\{\alpha(e)\left|\right. e\in E^p\right\}$ is linearly independent.  Note that a 1-skeleton is always $2$-independent.

\subsection{Subskeleta}
 Let $\Gamma_0\subseteq\Gamma$ be a subgraph.  Suppose that for each $pq\in E_0$, the function $\theta_{pq}\colon E^p\rightarrow E^q$ restricts to a function on $\theta_0\colon E^p_0\rightarrow E^q_0$.  Then $\Gamma_0$ inherits the 1-skeleton structure from $\Gamma$ and we call the 1-skeleton $(\Gamma_0,\alpha_0,\theta_0,\lambda_0)$ a subskeleton.  The \emph{normal edges} to $\Gamma_0$ at $p\in V_0$ are the edges at $p$ that are not in $\Gamma_0$, i.e. $N^p_0\coloneqq E^p\setminus E^p_0$.  Note that $\theta_{pq}$ also defines \emph{normal connection maps} $\theta_{pq}^\perp\colon N_0^p\rightarrow N_0^q$.

For any $k$-dimensional subspace $H\subseteq\R^n$, let $\Gamma_H$ be the disjoint union of subgraphs of $\Gamma$ whose oriented edge set is $E_H\coloneqq\left\{e\in E_\Gamma\left|\right. \alpha(e)\in H\right\}$.  Let $\Gamma^0_H$ be any connected component of $\Gamma_H$.  Then $\Gamma^0_H$ has constant valency.  Indeed if $pq\in E^0_H$, and $e\in E_{H,p}^0$, then we have $\alpha(e)-\lambda_{pq}(e)\alpha(\theta_{pq}(e))=c_{pq}\alpha(pq)$.  Since $\alpha(e), \alpha(pq)\in H$ we conclude that $\alpha(\theta(e))\in H$ as well.  Thus in particular we have that $\theta_{pq}\left(E^0_{H,p}\right)=E^0_{H,q}$ for every $pq\in E^0_H$.  Since $\Gamma_H^0$ is connected we get a subskeleton $(\Gamma_H^0,\alpha_H^0,\theta_H^0,\lambda_H^0)$ called a \emph{$k$-slice} with respect to $H\subseteq\R^n$.  Note that a $k$-slice need not be $k$-valent.

\subsection{Paths and Holonomy}
For every path (resp. loop) $\gamma\colon p_0\sra p_1\cdots\sra p_N$ in $\Gamma$ composing the connection maps along the edges of $\gamma$ yields the \emph{path connection map} (resp. \emph{holonomy map}) for $\gamma$, 
$$K_\gamma\coloneqq\theta_{p_{N-1}p_N}\circ\theta_{p_0p_1}\colon E^{p_0}\rightarrow E^{p_N}.$$ 
Similarly, the product of the compatibility maps along the edges of $\gamma$ yields the \emph{path connection number} (resp. \emph{holonomy number}) for $\gamma$, 
$$\left|K_\gamma\right|\coloneqq\prod_{i=0}^{N-1}\prod_{e\in E^{p_i}}\lambda_{p_ip_{i+1}}(e).$$  
For example if $\gamma\colon p\sra q$ consists of a single edge, then $\left|K_{pq}\right|=\prod_{e\in E^p}\lambda_{pq}(e)$.  For each path $\gamma\colon p\sra\cdots\sra q$, let $\bar{\gamma}\colon q\sra\cdots\sra p$ denote the path $\gamma$ traversed backwards.  For two paths $\gamma''\colon p\sra\cdots\sra q$ and $\gamma'\colon q\sra\cdots\sra r$, their \emph{compostion} $\gamma'\circ\gamma''\colon p\sra\cdots q\sra\cdots \sra r$ is defined by concatenation.  Clearly we have $K_{\gamma'\circ\gamma''}=K_{\gamma'}\circ K_{\gamma''}$, where the $\circ$ on the right is function composition.  Also note that $\left|K_{\gamma'\circ\gamma''}\right|=\left|K_{\gamma'}\right|\cdot\left|K_{\gamma''}\right|$.  In particular, for any path $\gamma\colon p\sra\cdots\sra q$ we always have $K_{\gamma}\circ K_{\bar{\gamma}}=id_{E_p}$ and $\left|K_{\gamma}\right|\cdot\left|K_{\bar{\gamma}}\right|=1$.  A \emph{loop with basepoint $p$} is any path of the form $\gamma\colon p\sra\cdots\sra p$.  Note that if $\gamma\colon q\sra\cdots \sra q$ is any loop with basepoint $q$, and $\gamma'\colon p\sra\cdots\sra q$ is any path from $p$ to $q$, then $\gamma''\coloneqq \bar{\gamma'}\circ\gamma\circ\gamma'$ is a loop with basepoint $p$ and $\left|K_{\gamma''}\right|=\left|K_\gamma\right|$.  
\begin{definition}
\label{def:Straight}
A 1-skeleton $(\Gamma,\alpha,\theta,\lambda)$ is \emph{straight} if $\left|K_\gamma\right|=1$ for every loop $\gamma$ in $\Gamma$.
\end{definition}
Note that in order to verify straightness, it suffices to just check those loops with a fixed basepoint.

  If the path $\gamma$ lies in a subskeleton $\Gamma_0$, then we can restrict the holonomy maps to the normal edges to $\Gamma_0$ to define the \emph{normal holonomy maps} and the \emph{normal holonomy numbers}.  We then say that the subskeleton is \emph{normally straight} if the normal path connection numbers $\left|K_\gamma^\perp\right|$ are equal to one for evey loop $\gamma$ in $\Gamma_0$. 
Note that a subskeleton of a straight 1-skeleton is straight if and only if it is normally straight.  Indeed if $\gamma$ is a loop in $\Gamma_0$, then the path connection number factors $\left|K_\gamma\right|=\left|K_\gamma^0\right|\cdot\left|K_\gamma^\perp\right|$.

\begin{proposition}
\label{prop:ksliceNormStr}
Every $k$-slice is normally straight.
\end{proposition}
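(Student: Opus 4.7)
The plan is to reduce the problem to an identity in the symmetric algebra $\Sym(\R^n/H)$, which is an integral domain. The key observation is that axiom A3, originally stated modulo the linear form $\alpha(pq)$, would become a clean multiplicative identity after reducing modulo the ideal $I \subseteq S$ generated by $H \subseteq \R^n$: for each edge $pq \in E_H^0$ the vector $\alpha(pq)$ lies in $H$ and therefore maps to zero in $S/I \cong \Sym(\R^n/H)$.

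To execute this, I would fix a loop $\gamma\colon p_0 \sra p_1 \sra \cdots \sra p_N = p_0$ in $\Gamma_H^0$ and write $\bar\alpha(e)$ for the image of $\alpha(e)$ in $\Sym(\R^n/H)$. For a normal edge $e \in N_0^p$ we have $\alpha(e) \notin H$, so $\bar\alpha(e)$ is a nonzero linear form. Applying A3 to $pq \in E_H^0$ and reducing modulo $I$ would give
\[\bar\alpha(e) = \lambda_{pq}(e)\,\bar\alpha(\theta_{pq}^\perp(e))\]
in the integral domain $\Sym(\R^n/H)$. (This same reduction confirms that $\theta_{pq}$ restricts to a bijection $N_0^p \to N_0^q$, since $\bar\alpha(\theta_{pq}(e))\neq 0$ forces $\theta_{pq}(e)$ to be normal.) Next I would iterate around $\gamma$, tracking each starting edge $e \in N_0^{p_0}$ via $e_i \coloneqq \theta_{p_{i-1}p_i}^\perp(e_{i-1})$ to obtain the telescoping identity
\[\bar\alpha(e) = c_\gamma(e)\,\bar\alpha(K_\gamma^\perp(e)), \qquad c_\gamma(e) \coloneqq \prod_{i=0}^{N-1}\lambda_{p_ip_{i+1}}(e_i).\]

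The final step is to multiply these identities over all $e \in N_0^{p_0}$. Because $K_\gamma^\perp$ permutes $N_0^{p_0}$, the product $\prod_e \bar\alpha(K_\gamma^\perp(e))$ equals $\prod_e \bar\alpha(e)$, which is nonzero in $\Sym(\R^n/H)$ as a product of nonzero linear forms. Swapping the order of summation in $\prod_e c_\gamma(e)$ and using that each partial normal holonomy $N_0^{p_0} \to N_0^{p_i}$ is a bijection would let me recognize this scalar as precisely $|K_\gamma^\perp|$. Cancelling the nonzero linear-form product in the integral domain would then force $|K_\gamma^\perp| = 1$, as desired.

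The essential content is the reduction modulo $I$, which converts the $\alpha(pq)$-ambiguity in A3 into a genuine equality in a polynomial domain where the nonzero product $\prod_{e \in N_0^{p_0}} \bar\alpha(e)$ can be cancelled safely. The main obstacle — really just a bookkeeping step — is the Fubini-style rearrangement identifying $\prod_e c_\gamma(e)$ with $|K_\gamma^\perp|$, which rests on nothing more than bijectivity of the partial normal holonomies.
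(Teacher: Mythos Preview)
Your proof is correct and rests on the same core idea as the paper: kill the $\alpha(pq)$-ambiguity in axiom A3 by reducing modulo $H$, then telescope around the loop. The paper executes this by choosing a single covector $\eta \in (\R^n)^*$ vanishing on $H$ but not on the normal directions, so that A3 collapses to the scalar identity $\lambda_{pq}(e') = \langle\eta,\alpha(e')\rangle / \langle\eta,\alpha(\theta_{pq}(e'))\rangle$, and the telescoping takes place in $\R$. You instead work directly in the integral domain $\Sym(\R^n/H)$ and cancel a product of nonzero linear forms. The two are formally equivalent---your $\bar\alpha$ and the paper's $\langle\eta,\alpha(\cdot)\rangle$ play identical roles---but your version is marginally cleaner: it avoids having to choose $\eta$ generically and sidesteps the small dimension count (that $\dim(P_{e,e'}\cap K_\eta)=1$) which the paper uses to pin down $\lambda_{pq}(e')$.
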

\begin{proof}
Fix a $k$-subspace $H\subseteq\R^n$, and let $(\Gamma_H^0,\alpha_H^0,\theta_H^0,\lambda_H^0)$ be a $k$-slice.  Choose a covector $\eta\in\left(\R^n\right)^*$ which vanishes on $H$, but does not vanish on $\alpha(e)$ for each $e\notin N_0$.  Let $K_\eta$ be the $(n-1)$-dimensional annihilator subspace of $\eta\colon\R^n\rightarrow\R$, and let $P_{e,e'}$ denote the $2$-dimensional subspace spanned by $\alpha(e)$, $\alpha(e')$, and $\alpha\left(\theta_e(e')\right)$.  Note that by our choice of $\eta$, $\dim\left(P_{e,e'}\cap K_\eta\right)=1$.  Now for each $e\in E_0^p$ and $e'\in N_0^p$ we have 
$$\alpha(e')-\frac{\langle\eta,\alpha(e')\rangle}{\langle\eta,\alpha(\theta_e(e'))\rangle}\alpha(\theta_e(e'))\in P_{e,e'}\cap K_\eta.$$  
On the other hand we also have 
$$\alpha(e')-\lambda_e(e')\alpha(\theta_e(e'))=c_e\alpha(e)\in P_{e,e'}\cap K_\eta.$$
Since the subspace $P_{e,e'}\cap K_\eta$ is $1$-dimensional, it must be spanned by $\alpha(e)$.  It follows that 
$$\alpha(e')-\frac{\langle\eta,\alpha(e')\rangle}{\langle\eta,\alpha(\theta_e(e'))\rangle}\alpha(\theta_e(e'))\in \R\cdot\alpha(e).$$
Moreover, since $\alpha(e)=-\alpha(\bar{e})$ and $\alpha\left(\theta_e(e')\right)$ are linearly independent it follows that 
\begin{equation}
\label{eq:HolkSlice1}
\lambda_e(e')=\frac{\langle\eta,\alpha(e')\rangle}{\langle\eta,\alpha(\theta_e(e'))\rangle}.
\end{equation}
Now for any loop $\gamma\colon p_0\sra p_1\sra\cdots\sra p_N\sra p_0$ in $\Gamma^0_H$, \eqref{eq:HolkSlice1} yields 
\begin{equation}
\label{eq:HolkSlice2}
\left|K_\gamma^\perp\right|= \prod_{i=0}^N\left(\prod_{e\in N^{p_i}_0}\frac{\langle\eta,\alpha(e)\rangle}{\langle\eta,\alpha(\theta_{p_ip_{i+1}}(e))\rangle}\right)=\frac{\prod_{i=0}^N\prod_{e\in N^{p_i}_0} \langle\eta,\alpha(e)\rangle}{\prod_{i=0}^N\prod_{e\in N^{p_{i+1}}_0}\langle\eta,\alpha(\theta_{p_ip_{i+1}}(e))\rangle}.
\end{equation}
Since the same factors occur in both the numerator and the denominator, the quotient in \eqref{eq:HolkSlice2} must equal $1$, hence the $k$-slice is normally straight.
\end{proof}

\subsection{Polarizations}
A covector $\xi\in\left(\R^n\right)^*$ with the property that $\langle\alpha(e),\xi\rangle$ is called \emph{polarizing}.  Note that any polarizing covector induces an orientation on $\Gamma$ by specifying for each pair of oriented edges $e,\bar{e}\in E_\Gamma$ the one for which $\langle\alpha(e),\xi\rangle >0$.  If the orientation on $\Gamma$ induced by $\xi$ is acyclic, i.e. no oriented loops, then we say that $\Gamma$ is \emph{$\xi$-acyclic}.  Note that a $\xi$-acyclic orientation gives a partial ordering to the vertex set $V_\Gamma$ defined by $p\leq q$ if and only if there is an oriented path in $\Gamma$ from $p$ to $q$.  As Guillemin and Zara point out \cite[Theorem 1.4.1]{GZ1}, this partial order on $V_\Gamma$ extends naturally to a total order as follows.  For each vertex $p\in V_\Gamma$ define $\hat{\phi}(p)$ to be the length of the longest oriented path in $\Gamma$ which terminates at $p$.  Now perturb the values of $\hat{\phi}$ slightly to obtain an injective function $\phi\colon V_\Gamma\rightarrow \R$ with the property that $\phi(p)<\phi(q)$ whenever $\langle\alpha(pq),\xi\rangle>0$.  Such a function $\phi$ is called a ($\xi$-compatible) \emph{Morse function} for $\Gamma$.  Clearly the existence of a Morse function depends on the existence of a $\xi$-acyclic orientation.  It turns out that there are some examples of 1-skeleta which do not admit any $\xi$-acyclic orientations at all, c.f. \cite[pg. 302]{GZ1} or \cite[pg. 950]{McD2}.  To use Morse theory on 1-skeleta we must therefore invoke the so-called acyclicity axiom:

\noindent\textbf{Acyclicity Axiom:}  There exists a polarizing covector $\xi\in\left(\R^n\right)^*$ for $(\Gamma,\alpha,\theta,\lambda)$ which induces a $\xi$-acyclic orientation on $\Gamma$.

For technical reasons we would also like our polarizing covectors to be \emph{generic}, meaning that for each $p\in V_\Gamma$ and each quadruple $e_1,e_2,e_3,e_4\in E^p$ we have 
$$\frac{\alpha(e_1)}{\langle\xi,\alpha(e_1)\rangle}-\frac{\alpha(e_2)}{\langle\xi,\alpha(e_2)\rangle}\neq\frac{\alpha(e_3)}{\langle\xi,\alpha(e_3)\rangle}-\frac{\alpha(e_4)}{\langle\xi,\alpha(e_4)\rangle}.$$
Note that the set of generic polarizing covectors is a non-empty Zariski open set.  Thus if $(\Gamma,\alpha,\theta,\lambda)$ satisfies the acyclicity axiom, then it is always possible to find a generic polarizing covector $\xi\in\left(\R^n\right)^*$ whose induced orientation on $\Gamma$ is acyclic.  

\emph{From now on, we will assume that the acyclicity axiom is satisfied by our 1-skeleta, and a generic polarizing covector for a 1-skeleton will always refer to one whose induced orientation is acyclic.}

Fix a generic polarizing covector $\xi\in\left(\R^n\right)^*$ for $(\Gamma,\alpha,\theta,\lambda)$.  The set of oriented edges at a vertex $p\in V_\Gamma$ whose directions pair positively with $\xi$ are said to \emph{flow out from $p$}, denoted $E^p_+$, and those whose directions pair negatively with $\xi$ are said to \emph{flow into $p$}, denoted $E^p_-$.  The cardinality of the set $E^p_-$ is called the index of $p$ with respect to $\xi$, denoted by $\ind_\xi(p)$.  

\begin{definition}
For $0\leq i\leq d$, the $i^{th}$ combinatorial Betti number for $(\Gamma,\alpha,\theta,\lambda)$ is the number
$$b_i(\Gamma,\alpha)\coloneqq\#\left\{p\in V_\Gamma\left| \ind_\xi(p)=i\right.\right\}.$$
\end{definition}
While the index of a vertex clearly depends on the choice of $\xi$, the remarkable fact is that the combinatorial Betti numbers do not depend on $\xi$, c.f. \cite[Theorem 1.3.1]{GZ1}.  From this we observe that the combinatorial Betti numbers are symmetric, i.e.  for $0\leq i\leq d$ $b_i(\Gamma,\alpha)=b_{d-i}(\Gamma,\alpha)$.  Indeed, if $\xi$ is a generic polarizing covector then so is $-\xi$, and we have for each $p\in V_\Gamma$ $\ind_{\xi}(p)=d-\ind_{-\xi}(p)$.  

Also note that from the acyclicity, we must have $b_0(\Gamma,\alpha)\geq 1$ and hence $b_d(\Gamma,\alpha)\geq 1$.  A 1-skeleton with $b_0(\Gamma,\alpha)=1=b_d(\Gamma,\alpha)$ is called \emph{pointed}.  In this case there is a unique \emph{source vertex} $p_{0}\in V_\Gamma$ whose edges all flow out, and a unique \emph{sink vertex} $p_1\in V_\Gamma$ whose edges all flow in.  A 1-skeleton is called \emph{non-cyclic} if every $2$-slice pointed.  It is not difficult to see that a non-cyclic 1-skeleton is always pointed.  The converse is not true however, c.f. \cite[pg. 951]{McD2}.  The following result is key to further results in this paper.     
\begin{proposition}
\label{prop:Straightness}
Let $(\Gamma,\alpha,\theta,\lambda)$ be any $d$-valent non-cyclic 1-skeleton in $\R^n$.  Then $(\Gamma,\alpha,\theta,\lambda)$ is straight if and only if every $2$-slice $(\Gamma_H^0,\alpha_H^0,\theta_H^0,\lambda_H^0)$ is straight.
\end{proposition}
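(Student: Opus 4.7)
The forward direction is immediate from the factorization $\left|K_\gamma\right|=\left|K_\gamma^0\right|\cdot\left|K_\gamma^\perp\right|$ for a loop $\gamma$ in a 2-slice $(\Gamma_H^0,\alpha_H^0,\theta_H^0,\lambda_H^0)$, observed just before Proposition \ref{prop:ksliceNormStr}. That proposition gives $\left|K_\gamma^\perp\right|=1$, so the straightness of $\Gamma$ forces $\left|K_\gamma^0\right|=1$ as well, i.e., the 2-slice is straight.

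For the converse, assume every 2-slice is straight. Since $(\Gamma,\alpha,\theta,\lambda)$ is non-cyclic it is pointed, and by hypothesis every 2-slice is pointed as well; fix a generic polarizing covector $\xi$, a compatible Morse function $\phi$, and let $p_0$ be the unique source, so $\phi$ attains its global minimum at $p_0$. My plan is to prove $\left|K_\gamma\right|=1$ for every loop $\gamma$ based at $p_0$ by induction on the lexicographic pair
$$c(\gamma)\coloneqq\left(\max_{v\in\gamma}\phi(v),\ \#\left\{i\colon\phi(q_i)=\max_{v\in\gamma}\phi(v)\right\}\right).$$
For a nontrivial loop $\gamma=p_0=q_0\to q_1\to\cdots\to q_N=p_0$ the maximum is attained at some $q_i=q^*\neq p_0$, so $\phi(q_{i-1})<\phi(q^*)>\phi(q_{i+1})$. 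If $q_{i-1}=q_{i+1}$, the subpath $q_{i-1}\to q^*\to q_{i-1}$ is a back-and-forth contributing a factor of $1$ to $\left|K_\gamma\right|$ by the identity $\left|K_e\right|\cdot\left|K_{\bar{e}}\right|=1$, and excising it strictly reduces complexity. Otherwise $\bar{e}_{i-1}$ and $e_i$ are distinct edges in $E^{q^*}$ spanning a 2-plane $H$, and the 2-slice $\Sigma\coloneqq\Gamma_H^0$ through $q^*$ is pointed with unique source $p_\Sigma^-$.

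Let $\tau_1,\tau_2$ be oriented paths in $\Sigma$ from $p_\Sigma^-$ to $q_{i-1}$ and $q_{i+1}$ respectively, and set $\delta\coloneqq\bar{\tau}_1\circ\tau_2$. Since $\phi$ strictly increases along oriented paths, every vertex of $\delta$ has $\phi$-value at most $\max\{\phi(q_{i-1}),\phi(q_{i+1})\}<\phi(q^*)$, so in particular $\delta$ avoids $q^*$. Replacing the subpath $q_{i-1}\to q^*\to q_{i+1}$ in $\gamma$ by $\delta$ yields a loop $\gamma'$ with $c(\gamma')<c(\gamma)$. The concatenation $\ell\coloneqq(q_{i-1}\to q^*\to q_{i+1})\circ\bar{\delta}$ is a loop in $\Sigma$, so $\left|K_\ell^0\right|=1$ by the hypothesis on 2-slices and $\left|K_\ell^\perp\right|=1$ by Proposition \ref{prop:ksliceNormStr}, giving $\left|K_\ell\right|=1$. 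Multiplicativity of $\left|K\right|$ under concatenation then yields $\left|K_\gamma\right|=\left|K_{\gamma'}\right|$, and the induction hypothesis finishes the step.

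The main subtlety is controlling the detour $\delta$ so that the induction complexity strictly decreases; specifically, one must guarantee that the paths $\tau_1,\tau_2$ stay strictly below $\phi(q^*)$ and therefore $\delta$ cannot revisit $q^*$. Both follow from the strict $\phi$-monotonicity along oriented paths together with the existence of the unique source $p_\Sigma^-$ of $\Sigma$ guaranteed by non-cyclicity. The degenerate case $q_{i-1}=q_{i+1}$, where no 2-slice can be formed from the two loop-edges at $q^*$, must be treated separately but is handled directly by back-and-forth cancellation.
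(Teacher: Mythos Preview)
Your proof is correct and follows essentially the same strategy as the paper: induct on the lexicographic pair (maximum $\phi$-value on the loop, number of visits to that maximum), and at the inductive step reroute the peak $q_{i-1}\to q^*\to q_{i+1}$ through the unique source of the $2$-slice determined by the two edges at $q^*$, using straightness of that $2$-slice together with Proposition~\ref{prop:ksliceNormStr} to see the holonomy number is unchanged. Your explicit treatment of the degenerate case $q_{i-1}=q_{i+1}$ (back-and-forth cancellation) is a small but genuine improvement, since the paper tacitly assumes the two edges at $q^*$ are distinct when forming the $2$-plane $H$; one minor point is that your composition $\delta=\bar{\tau}_1\circ\tau_2$ is written in the opposite order from the paper's convention, but the intended path from $q_{i-1}$ to $q_{i+1}$ is clear.
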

\begin{proof}
If $(\Gamma,\alpha,\theta,\lambda)$ is straight, then it follows from Proposition \ref{prop:ksliceNormStr} that every $2$-slice (in fact every $k$-slice) is straight.  Now assume that every $2$-slice is straight.  Fix a generic polarizing covector, let $\phi\colon V_\Gamma\rightarrow\R$ be a compatible Morse function, and let $p_0$ be the minimum vertex with respect to $\phi$.  For each loop $\gamma$ based at $p_0$ let $h(\gamma)$ be the largest vertex of $\gamma$ with respect to $\phi$, and let $\mu(\gamma)$ be the number of times that $\gamma$ passes through it.  Then define the \emph{height} of $\gamma$ to be the pair $\left(h(\gamma),\mu(\gamma)\right)\in V_\Gamma\times\Z_+$.  We endow the set $V_\Gamma\times\Z_{+}$ with the lexicographic ordering (i.e. $(p,n)\leq(q,m)$ if and only if either $p<q$ or $p=q$ and $n<m$).  We will prove that $|K_\gamma|=1$ by induction on the height of $\gamma$.

First assume the height of $\gamma$ is $(p_0,1)$.  Then $\gamma$ is the trivial loop and there is nothing to show.  Now assume that $\gamma$ is a loop based at $p_0$ with height $(x,m)$.  Then we can write $\gamma\colon p_0\sra p_1\sra\cdots\sra p_{m-1}\sra x\sra p_{m+1}\sra\cdots\sra p_n\sra p_0$, where $\phi(p_i)\leq\phi(x)$ for all $i$.  Define $\gamma_0\colon p_0\sra p_1\sra\cdots \sra p_{m-1}$, $\gamma_{2}\colon p_{m+1}\sra\cdots p_n\sra p_0$, and $\gamma_1\colon p_{m-1}\sra x\sra p_{m+1}$, so that $\gamma=\gamma_2\circ\gamma_1\circ\gamma_0$.  Let $\Gamma_H^0=(V_0,E_0)$ be the graph of the $2$-slice containing $x$ spanned by $\alpha(xp_{m-1})$ and $\alpha(xp_{m+1})$, and let $s\in V_0$ denote the unique source in $\Gamma_H^0$, which exists by our non-cyclicity assumption.  By uniqueness of $s$, there exist oriented paths in $\Gamma_H^0$, say $\gamma_4\colon s\sra t_1\sra\cdots\sra t_k=p_{m-1}$ and $\gamma_3\colon s\sra u_1\sra\cdots\sra u_\ell=p_{m+1}$.  Then $\bar{\gamma}_3\circ\gamma_{1}\circ\gamma_4\colon s\sra t_1\sra\cdots\sra p_{m-1}\sra x\sra p_{m+1}\sra\cdots\sra u_1\sra s$ is a loop in $\Gamma_H^0$ (based at $s$).  Note that $\phi(t_a),\phi(u_b)<\phi(x)$ for all $a,b$ since the paths $\gamma_3$ and $\gamma_4$ are oriented.  Since we are assuming $(\Gamma_H^0,\alpha_H^0,\theta_H^0,\lambda_H^0)$ is straight, we deduce that $\left|K_{\bar{\gamma}_3}\circ K_{\gamma_1}\circ K_{\gamma_4}\right|=1$ from which we deduce that
$$\left|K_{\gamma_1}\right|=\frac{\left|K_{\gamma_3}\right|}{\left|K_{\gamma_4}\right|}.$$  
On the other hand the new loop (based at $p_0$) 
$$\hat{\gamma}\coloneqq\gamma_2\circ\gamma_3\circ\bar{\gamma}_4\circ\gamma_0\colon p_0\sra\cdots\sra p_{m-1}\sra t_{k-1}\sra\cdots s\sra u_1\sra\cdots \sra p_{m+1}\sra\cdots p_n\sra p_0$$
has the same holonomy number as $\gamma$, i.e.
$$\left|K_{\hat{\gamma}}\right|=\left|K_{\gamma_2}\right|\cdot\frac{\left|K_{\gamma_3}\right|}{\left|K_{\gamma_4}\right|}\cdot\left|K_{\gamma_0}\right|=\left|K_{\gamma}\right|.$$
Since $\hat{\gamma}$ must have smaller height than $\gamma$, we deduce by induction that $\left|K_{\hat{\gamma}}\right|=1$, and thus $\left|K_\gamma\right|$ must equal $1$ as well.
\end{proof}

\subsection{Cohomology Rings}
Let $S$ denote the symmetric algebra on $\R^n$, and let $Q$ denote its field of fractions.  Let $\Maps(V_\Gamma,S)$ denote the set of set maps $f\colon V_\Gamma\rightarrow S$.  The set $\Maps(V_\Gamma,S)$ has the structure of a graded ring, where addition and multiplication are defined point-wise.  Moreover the polynomial ring $S$ sits inside $\Maps(V_\Gamma,S)$ as the subring of constant maps, hence $\Maps(V_\Gamma,S)$ is actually an $S$-algebra.
The \emph{equivariant cohomology} of $(\Gamma,\alpha,\theta,\lambda)$ is defined as the subset
$$H(\Gamma,\alpha)\coloneqq\left\{f\colon V_\Gamma\rightarrow S\left|\right. f(q)-f(p)\in\alpha(pq)\cdot S, \ \ \ \forall pq\in E_\Gamma\right\}.$$
It is straightforward to see that $H(\Gamma,\alpha)$ is a graded $S$-subalgebra of $\Maps(V_\Gamma,S)$.  Since $S$ is Noetherian and $\Maps(V_\Gamma,S)$ is finitely generated, the subalgebra $H(\Gamma,\alpha)$ must also be finitely generated.  Hence the quotient 
$$\overline{H(\Gamma,\alpha)}\coloneqq\frac{H(\Gamma,\alpha)}{S^+\cdot H(\Gamma,\alpha)}$$
is a finite-dimensional graded $\R$-algebra, called the \emph{ordinary cohomology} of $(\Gamma,\alpha,\theta,\lambda)$.  

\subsubsection{Thom Classes and Integral Operators}
The \emph{support} of an equivariant class $f\in H(\Gamma,\alpha)$ is the set of vertices on which $f$ is non-zero, denoted by $\operatorname{supp}(f)$.  A non-zero equivariant class $\tau_0\in H(\Gamma,\alpha)$ of degree $k$ whose support lies in the vertex set of a subskeleton with co-valence $k$ is called a \emph{Thom class} for that subskeleton.  Note the support and degree restrictions tell us that the Thom class of a subskeleton $\Gamma_0\subseteq\Gamma$ must have the form
\begin{equation}
\label{eq:ThomClassDescription}
\tau_0(p)=\begin{cases} t_p\cdot\prod_{e\in N^p_0}\alpha(e) & \text{if $p\in V_0$}\\ 0 & \text{otherwise}\\ \end{cases}
\end{equation}
for some real numbers $\left\{t_p\right\}_{p\in V_0}$ satisfying the following condition: 
\begin{equation}
\label{eq:CocycleCondition}
t_q=t_p\cdot\prod_{e\in N^p_0}\lambda_{pq}(e) \ \ \ \forall \ pq\in E_0.
\end{equation}  
Conversely, if we can define numbers $\left\{t_p\right\}_{p\in V_0}$ satisfying \eqref{eq:CocycleCondition} then the subskeleton $(\Gamma_0,\alpha_0,\theta_0,\lambda_0)$ must support a Thom class.  This turns out to be related to the normal straightness of the subskeleton. 
\begin{proposition}
\label{prop:ThomNormal}
A subskeleton $(\Gamma_0,\alpha_0,\theta_0,\lambda_0)$ supports a Thom class if and only if it is normally straight.
\end{proposition}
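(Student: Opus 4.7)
The proof plan hinges on the observation that the cocycle condition (4.2) can be written compactly as
\[ t_q = t_p \cdot |K_{pq}^\perp|, \qquad \forall \, pq\in E_0, \]
since $\prod_{e\in N^p_0}\lambda_{pq}(e)$ is precisely the normal holonomy number of the single edge $pq$. So constructing a Thom class is equivalent to constructing a non-vanishing multiplicative cochain on the vertices of $\Gamma_0$ whose coboundary on each edge records the normal holonomy of that edge.

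For the forward direction, suppose $\tau_0$ is a Thom class with coefficients $\{t_p\}_{p\in V_0}$. Since $\tau_0\neq 0$ and the multiplicative relation propagates non-vanishing along edges, all $t_p$ are nonzero (using that $\Gamma_0$ is connected). For any loop $\gamma\colon p_0\to p_1\to\cdots\to p_N=p_0$ in $\Gamma_0$, iterating (4.2) gives $t_{p_0}=t_{p_0}\cdot|K_\gamma^\perp|$, so $|K_\gamma^\perp|=1$, proving normal straightness.

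For the converse, assume $\Gamma_0$ is normally straight. Fix a basepoint $p_0\in V_0$, set $t_{p_0}=1$, and for any $q\in V_0$ pick a path $\gamma\colon p_0\sra\cdots\sra q$ in $\Gamma_0$ and define $t_q\coloneqq|K_\gamma^\perp|$. This is well-defined: two paths from $p_0$ to $q$ differ by a loop $\gamma''$ based at $p_0$, and normal straightness gives $|K_{\gamma''}^\perp|=1$. By construction, extending any such path by one edge $pq\in E_0$ shows the cocycle condition (4.2) holds.

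It remains to verify that the resulting map $\tau_0$ defined by \eqref{eq:ThomClassDescription} actually lies in $H(\Gamma,\alpha)$, i.e.\ $\tau_0(q)-\tau_0(p)\in\alpha(pq)\cdot S$ for every edge $pq\in E_\Gamma$. There are four cases. If $p,q\notin V_0$, both values vanish. If exactly one of $p,q$ is in $V_0$ (say $p$), then $pq\in N_0^p$ so $\alpha(pq)$ divides $\tau_0(p)$ while $\tau_0(q)=0$. If $p,q\in V_0$ but $pq\notin E_0$, then $pq\in N_0^p$ and $qp\in N_0^q$, so $\alpha(pq)$ divides both $\tau_0(p)$ and $\tau_0(q)$. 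The essential case is $p,q\in V_0$ with $pq\in E_0$: here axiom A3 gives $\alpha(e)\equiv\lambda_{pq}(e)\alpha(\theta_{pq}(e))\pmod{\alpha(pq)}$ for each $e\in N^p_0$, and since $\theta_{pq}$ restricts to a bijection $N_0^p\to N_0^q$, multiplying over $e\in N_0^p$ yields
\[ \prod_{e\in N_0^p}\alpha(e)\equiv |K_{pq}^\perp|\cdot\prod_{e'\in N_0^q}\alpha(e')\pmod{\alpha(pq)}. \]
Combining this with $t_q=t_p|K_{pq}^\perp|$ gives $\tau_0(p)\equiv\tau_0(q)\pmod{\alpha(pq)}$, as required. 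The main technical obstacle is exactly this last congruence—one must verify that the $\lambda_{pq}$ factors produced by A3 when clearing $\alpha(pq)$ from the product align precisely with the normal holonomy factor dictated by the cocycle relation; once the bijection $\theta_{pq}\colon N_0^p\to N_0^q$ is used to reindex, the match is immediate.
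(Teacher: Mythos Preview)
Your proof is correct and follows essentially the same route as the paper: both directions rest on the observation that the cocycle condition \eqref{eq:CocycleCondition} reads $t_q = t_p\,|K_{pq}^\perp|$, so that a Thom class forces telescoping of holonomy numbers around loops, and conversely normal straightness lets one define $t_q$ via path holonomy from a basepoint. The only difference is that you explicitly carry out the case-by-case verification that the resulting $\tau_0$ lies in $H(\Gamma,\alpha)$, whereas the paper asserts this equivalence (between \eqref{eq:CocycleCondition} and $\tau_0\in H(\Gamma,\alpha)$) in the discussion preceding the proposition and then proves only the equivalence of \eqref{eq:CocycleCondition} with normal straightness.
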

\begin{proof}
Assume that the subskeleton is normally straight.  Fix a basepoint $p_0$ and set $t_{p_0}\coloneqq 1$.  Now for any other vertex $q\neq p_0$ of $\Gamma_0$, define a path $\gamma_q\colon p_0\sra\cdots\sra q$ and define $t_q\coloneqq\left|K_{\gamma_q}^\perp\right|$.  Then for any edge $pq\in E_0$ we get a loop (based at $p_0$) $\gamma\coloneqq\bar{\gamma_q}\circ \left\{p\sra q\right\}\circ\gamma_p\colon p_0\sra\cdots\sra p\sra q\sra\cdots\sra p_0$.  Therefore we have that
$$\left|K_{\gamma}^\perp\right|=\frac{\left|K_{\gamma_p}^\perp\right|\cdot\left|K_{pq}^\perp\right|}{\left|K^\perp_{\gamma_q}\right|}=\frac{t_p}{t_q}\left|K_{pq}^\perp\right|.$$  
Hence by straightness, we have 
$$t_p\left|K_{pq}^\perp\right|=t_q=t_p\prod_{e\in N^p_0}\lambda_{pq}(e).$$
Hence our numbers $\left\{\left|K_{\gamma_p}^\perp\right|\right\}_{p\in V_0}$ satisfy \eqref{eq:CocycleCondition} which implies that $\Gamma_0$ supports a Thom class.  

Conversely assume that $\Gamma_0$ supports a Thom class, with constants $\left\{t_p\right\}_{p\in V_0}$ satisfying \eqref{eq:CocycleCondition}.  Let $\gamma\colon p_0\sra p_1\sra p_2\sra\cdots\sra p_N\sra p_0$ be any loop in $\Gamma_0$.  Then we have that 
\begin{align*}
\left|K_{\gamma}^\perp\right| & =\left|K^\perp_{p_0p_1}\right|\cdot\left|K^\perp_{p_1p_2}\right|\cdots\left|K^\perp_{p_Np_0}\right|\\
& = \frac{t_{p_0}}{t_{p_1}}\cdot\frac{t_{p_1}}{t_{p_2}}\cdots\frac{t_{p_N}}{t_{p_0}}\\
& = 1,
\end{align*}
as desired.
\end{proof}

Note that vertices and edges always have Thom classes.  More generally, by Proposition \ref{prop:ksliceNormStr}, every $k$-slice has a Thom class.  
For any subskeleton the set-inclusion $(\Gamma_0,\alpha_0,\theta_0,\lambda_0)\subseteq(\Gamma,\alpha,\theta,\lambda)$ defines a restriction map $H(\Gamma,\alpha)\rightarrow H(\Gamma_0,\alpha_0)$.  If the subskeleton supports a Thom class then multiplication by that Thom class defines a map in the other direction.
\begin{proposition}
\label{prop:whatever}
Given a $k$-valent subskeleton $(\Gamma_0,\alpha_0,\theta_0,\lambda_0)\subseteq(\Gamma,\alpha,\theta,\lambda)$ with a Thom class $\tau_0\in H(\Gamma,\alpha)$, multiplication defines a map of $S$-modules
$$\xymap{H(\Gamma_0,\alpha_0)\ar[r]^-{\times \tau_0} & H(\Gamma,\alpha)[k]\\
f\ar@{|->}[r] & f\cdot\tau_0.\\}$$  
\end{proposition}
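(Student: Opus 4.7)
The plan is to define the product $f \cdot \tau_0 \in \Maps(V_\Gamma, S)$ vertexwise by
$$(f \cdot \tau_0)(p) = \begin{cases} f(p)\,\tau_0(p) & \text{if } p \in V_0, \\ 0 & \text{otherwise,} \end{cases}$$
and then check it lies in $H(\Gamma,\alpha)$. Once membership is established, the map $\times \tau_0$ is automatically $S$-linear, and the degree shift follows from $\tau_0$ having degree equal to the co-valence. So the substance reduces to verifying, for every $pq \in E_\Gamma$, that $(f\tau_0)(q) - (f\tau_0)(p) \in \alpha(pq)\cdot S$.

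First I would split into three cases. Case A: neither $p$ nor $q$ lies in $V_0$, so the difference is $0$. Case B: exactly one endpoint lies in $V_0$, say $p \in V_0$ and $q \notin V_0$. Then $pq \notin E_0$, so $pq \in N^p_0$ and $\alpha(pq)$ is one of the factors appearing in $\tau_0(p) = t_p \prod_{e \in N^p_0}\alpha(e)$; hence the difference $-f(p)\tau_0(p)$ is divisible by $\alpha(pq)$. Case C: both $p,q \in V_0$. If $pq \notin E_0$, then $pq \in N^p_0$ and $qp \in N^q_0$, so $\alpha(pq)$ divides both $\tau_0(p)$ and $\tau_0(q)$ individually, and we are done. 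The one substantive situation is $pq \in E_0$.

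For that last subcase, I would use the telescoping identity
$$(f\tau_0)(q) - (f\tau_0)(p) = f(q)\bigl[\tau_0(q) - \tau_0(p)\bigr] + \bigl[f(q) - f(p)\bigr]\tau_0(p).$$
The second summand is divisible by $\alpha(pq)$ because $f \in H(\Gamma_0,\alpha_0)$ and $pq \in E_0$. For the first summand, reindex $N^q_0$ via the normal connection $\theta_{pq}^\perp \colon N^p_0 \to N^q_0$ to write $\tau_0(q) = t_q \prod_{e \in N^p_0}\alpha(\theta_{pq}(e))$. By axiom A3, $\alpha(\theta_{pq}(e)) \equiv \lambda_{pq}(e)^{-1}\alpha(e) \pmod{\alpha(pq)}$ for every $e \in N^p_0$, so
$$\tau_0(q) \equiv \frac{t_q}{\prod_{e \in N^p_0}\lambda_{pq}(e)} \prod_{e \in N^p_0}\alpha(e) \pmod{\alpha(pq)}.$$
Invoking the cocycle condition \eqref{eq:CocycleCondition}, the leading coefficient collapses to $t_p$, which yields $\tau_0(q) \equiv \tau_0(p) \pmod{\alpha(pq)}$.

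The only real obstacle is this last congruence computation, and it is mild: one just has to be careful that the product reduction modulo $\alpha(pq)$ is consistent with the cocycle condition defining $t_q/t_p$, which is precisely how that condition was engineered in \eqref{eq:CocycleCondition}. Everything else is straightforward bookkeeping across the three cases.
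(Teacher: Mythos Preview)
Your proof is correct and follows the same case analysis as the paper. The one place you work harder than necessary is the subcase $pq \in E_0$: since $\tau_0$ is by hypothesis an element of $H(\Gamma,\alpha)$, the congruence $\tau_0(q) \equiv \tau_0(p) \pmod{\alpha(pq)}$ holds for \emph{every} edge of $\Gamma$ automatically, so your explicit computation via the normal connection and the cocycle condition \eqref{eq:CocycleCondition} is not needed---the telescoping identity plus $f \in H(\Gamma_0,\alpha_0)$ and $\tau_0 \in H(\Gamma,\alpha)$ finishes that case immediately.
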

\begin{proof}
Let $(\Gamma_0,\alpha_0,\theta_0,\lambda_0)$ be a $k$-valent subskeleton of $(\Gamma,\alpha,\theta,\lambda)$, let $\tau_0\in H(\Gamma,\alpha)$ be a Thom class, and let $f\in H(\Gamma_0,\alpha_0)$ be an arbitrary element.  We need to check that for each $pq\in E_\Gamma$ we have 
\begin{equation}
\label{eq:whatever1}
f(q)\cdot\tau_0(q)-f(p)\cdot\tau_0(p)=c\cdot\alpha(pq)
\end{equation}
for some $c\in S$.  Clearly if neither one of $p$ or $q$ are vertices in $\Gamma_0$, then by our description of Thom classes in \eqref{eq:ThomClassDescription}, \eqref{eq:whatever1} is satisfied.  Suppose that both $p$ and $q$ are vertices in $\Gamma_0$.  If $pq$ is an oriented edge of $\Gamma_0$ then \eqref{eq:whatever1} must be satisfied since both $f$ and $\tau_0$ are in $H(\Gamma_0,\alpha_0)$.  On the other hand, if $pq$ is not an oriented edge in $\Gamma_0$ then $pq$ is normal to $\Gamma_0$ at $p$ and $qp$ is normal to $\Gamma_0$ at $q$.  Hence in this case both $\tau_0(p)$ and $\tau_0(q)$ are $S$-multiples of $\alpha(pq)$ and again \eqref{eq:whatever1} is satisfied.
\end{proof}

We say that a Thom class is \emph{non-vanishing} if its image in the ordinary cohomology ring is non-zero.

\begin{proposition}
\label{prop:StrInt}
Let $(\Gamma,\alpha,\theta,\lambda)$ be a $d$-valent 1-skeleton in $\R^n$.  Then the following are equivalent.
\begin{enumerate}
\item $(\Gamma,\alpha,\theta,\lambda)$ is straight.
\item Some (hence every) vertex of $\Gamma$ has a non-vanishing Thom class.
\item There exist non-zero constants $\left\{c_p\right\}_{p\in V_\Gamma}$ such that for every $f\in H(\Gamma,\alpha)$ we have
$$\sum_{p\in V_\Gamma}\frac{f(p)}{c_p\cdot\prod_{e\in E^p}\alpha(e)}\in S.$$
\end{enumerate}
\end{proposition}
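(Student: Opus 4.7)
The plan is to establish the cycle (i) $\Rightarrow$ (iii) $\Rightarrow$ (ii) $\Rightarrow$ (i); the ``hence every vertex'' clause in (ii) emerges from the last step. For (i) $\Rightarrow$ (iii), I would fix a basepoint $p_0$, choose for each $q$ a path $\gamma_q\colon p_0\sra q$, and set $c_q\coloneqq|K_{\gamma_q}|$ (with $c_{p_0}\coloneqq 1$); straightness guarantees independence of path. To verify $\sum_{p}f(p)/(c_p\prod_{e\in E^p}\alpha(e))\in S$, it suffices to show the rational function has no pole along each hyperplane $\{v=0\}$ with $v=\alpha(e)$ for some edge $e$. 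Axiom A1 forces the $1$-slice $\Gamma_{\R v}$ to be a disjoint union of isolated edges, so I would group the terms accordingly. For each edge $pq$ in $\Gamma_{\R v}$, applying axiom A3 at $p$ with $e=pq$ yields $\prod_{e'\in E^q,\,e'\neq qp}\alpha(e') \equiv |K_{pq}|^{-1}\prod_{e\in E^p,\,e\neq pq}\alpha(e) \pmod{\alpha(pq)}$; combining this with $c_q=c_p|K_{pq}|$ and $f(q)\equiv f(p)\pmod{\alpha(pq)}$ makes the joint $(p,q)$-contribution regular along the hyperplane.

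For (iii) $\Rightarrow$ (ii), every vertex $p$ supports the Thom class $\tau_p$ of degree $d$ with $\tau_p(p)=\prod_{e\in E^p}\alpha(e)$ and $\tau_p(x)=0$ otherwise; direct calculation gives $\int_\Gamma\tau_p = 1/c_p\neq 0$. If $\bar\tau_p = 0$ in $\overline{H(\Gamma,\alpha)}$, then $\tau_p\in S^+\cdot H(\Gamma,\alpha)$ would force $\int_\Gamma\tau_p\in S^+$, contradicting that $1/c_p$ is a nonzero constant.

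For (ii) $\Rightarrow$ (i), assume $\tau_{p_0}$ is a non-vanishing Thom class at $p_0$ with $t_{p_0}=1$. For each edge $pq$, Propositions \ref{prop:ksliceNormStr} and \ref{prop:ThomNormal} furnish the Thom class $\tau_{pq}$ of the $1$-slice containing $pq$, and an explicit computation gives $\alpha(pq)\cdot\tau_{pq} = \tau_p - \sigma_q$, where $\tau_p$ is a Thom class at $p$ with $t_p=1$ and $\sigma_q$ is a Thom class at $q$ with $t_q=|K_{pq}|$. Since $\alpha(pq)\in S^+$, passing to $\overline{H(\Gamma,\alpha)}$ yields $\bar\tau_p = \bar\sigma_q$. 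Iterating this identity along any path $\gamma_q\colon p_0\sra q$ produces a non-vanishing Thom class at $q$ with $t_q = |K_{\gamma_q}|$, proving the ``hence every vertex'' assertion. For a loop $\gamma$ at $p_0$, the resulting Thom class at $p_0$ has $t_{p_0}=|K_\gamma|$ and so equals $|K_\gamma|\tau_{p_0}$ by proportionality of any two Thom classes at the same vertex, whence $\bar\tau_{p_0} = |K_\gamma|\bar\tau_{p_0}$ in $\overline{H(\Gamma,\alpha)}$; non-vanishing then forces $|K_\gamma|=1$.

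The main obstacle will be the pole-cancellation step in (i) $\Rightarrow$ (iii): in the non-generic case where several $1$-slice components may share the same direction $v$, one has to verify that cancellation occurs on each component independently. The crucial technical input is the identity relating $\prod_{e\in E^p,\,e\neq pq}\alpha(e)$ and $\prod_{e'\in E^q,\,e'\neq qp}\alpha(e')$ modulo $\alpha(pq)$ coming from axiom A3, which matches precisely the ratio $c_q/c_p = |K_{pq}|$ built into the definition of the $c_p$'s. Once this identity is extracted, the remaining arguments are essentially bookkeeping on Thom classes and residues.
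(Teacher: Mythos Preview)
Your proposal is correct and follows essentially the same cycle (i) $\Rightarrow$ (iii) $\Rightarrow$ (ii) $\Rightarrow$ (i) as the paper, with the same key ingredients: the constants $c_q=|K_{\gamma_q}|$, the congruence $c_p\prod_{e\neq pq}\alpha(e)\equiv c_q\prod_{e'\neq qp}\alpha(e')\pmod{\alpha(pq)}$ coming from A3, and the edge-Thom-class identity $\alpha(pq)\cdot\tau_{pq}=\tau_p-|K_{pq}|\tau_q$. The only cosmetic difference is that for (i) $\Rightarrow$ (iii) the paper clears denominators globally and shows each irreducible factor $\alpha_i$ divides the numerator, whereas you phrase the same cancellation locally as ``no pole along $\{v=0\}$''; both reduce to the identical edge-by-edge congruence, and your worry about several $1$-slice components sharing a direction is handled automatically since each edge-pair cancels independently and the remaining vertices contribute no pole.
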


\begin{proof}

(iii) $\Rightarrow$ (ii).
Define the function $\int_\Gamma\colon\Maps(V_\Gamma,S)\rightarrow Q$ by $\int_\Gamma f=\sum_{p\in V_\Gamma}\frac{f(p)}{c_p\prod_{e\in E^p}\alpha(e)}$.  Assuming that (iii) holds, the function restricts to a $S$-module map $\int_\Gamma\colon H(\Gamma,\alpha)\rightarrow S[-d]$.  Fix $p\in V_\Gamma$, and let $T_p$ denote the Thom class for $p$ defined by $T_p(p)=\prod_{e\in E^p}\alpha(e)$.  Then $\int_\Gamma T_p=\frac{1}{c_p}\neq 0$.  Since $\int_\Gamma$ is an $S$-module map, it passes to a map on ordinary cohomology $\int_\Gamma\colon \overline{H(\Gamma,\alpha)}\rightarrow\R[-d]$, and since $\int_\Gamma\bar{T}_p=\overline{\int_\Gamma T_p}=\frac{1}{c_p}\neq 0$ we conclude that the ordinary class $\bar{T}_p$ is non-zero.

\paragraph{(ii) $\Rightarrow$ (i)}
For each vertex $p\in V_\Gamma$, we may take Thom class to be the one for which $T_p(p)=\prod_{e\in E^p}\alpha(e)$, scaling if necessary.  For each edge $pq\in E_\Gamma$ define its Thom class by 
\begin{equation}
\label{eq:EdgeClass}
\sigma_{pq}(x)=\begin{cases}
\prod_{\substack{e\in E^p\\ e\neq pq\\}}\alpha(e) & \text{if $x=p$}\\
\left|K_{pq}\right|\cdot\prod_{\substack{e\in E^q\\ e\neq qp\\}}\alpha(e) & \text{if $x=q$}\\
0 & \text{otherwise},\\
\end{cases}
\end{equation}  
where $\left|K_{pq}\right|=\prod_{e\in E^p}\lambda_{pq}(e)$.  Then for each $pq\in E_\Gamma$ we have $\left|K_{pq}\right|\cdot T_q=T_p-\alpha(pq)\cdot \sigma_{pq}$.  Inductively, for any path $\gamma_i\colon p=p_0\sra p_1\sra\cdots\sra p_i$, we have  
$$\left|K_{\gamma_i}\right|\cdot T_{p_i}=T_{p_0}-\alpha(p_0p_1)\sigma_{p_0p_1}-\cdots\left|K_{\gamma_{i-1}}\right|\alpha(p_{i-1}p_i)\sigma_{p_{i-1}p_i}.$$
In particular for any loop $\gamma\colon p_0\sra p_1\sra\cdots\sra p_N\sra p_0$, we have that 
\begin{equation}
\label{eq:Str1}
\left|K_\gamma\right|T_{p_0}=T_{p_0}-\sum_{i=0}^N\left|K_{\gamma_i}\right|\alpha(p_{i}p_{i+1})\sigma_{p_ip_{i+1}}.
\end{equation}
Note that the sum on the RHS of \eqref{eq:Str1} is in the ideal $S^+\cdot H(\Gamma,\alpha)$.  Thus if $\bar{T}_{p_0}\neq 0$ then we must have that $\left|K_\gamma\right|=1$. 

\paragraph{(i) $\Rightarrow$ (iii)}
Fix a base point $p_0$ and define $p_0\coloneqq 1$.  Then for each $p_0\neq q\in V_\Gamma$ define a path $\gamma_q$ from $p_0$ to $q$, and set $c_q\coloneqq\left|K_{\gamma_q}\right|$.  In particular, straightness guarantees that for every $pq\in E_\Gamma$, we have $c_q=c_p\cdot \prod_{e\in E^p}\lambda_{pq}(e)$.  Fix $f\in H(\Gamma,\alpha)$.  The following argument has been lifted almost verbatim from the one given by Guillemin and Zara \cite[Theorem 2.2]{GZ0}.
  
  Let $\alpha_1,\ldots,\alpha_N$ denote a set of pairwise linearly independent vectors in $\R^n$ such that for each $e\in E_\Gamma$, $\alpha(e)$ is collinear with some $\alpha_j$, $1\leq j\leq N$.  Then we can find a suitable polynomial $g\in S$ such that 
\begin{equation}
\label{eq:integral2}
\sum_{p\in V_\Gamma}\frac{f(p)}{c_p\prod_{e\in E^p}\alpha(e)}=\frac{g}{\alpha_1\cdots\alpha_N}.
\end{equation}
We will show that for a fixed index $1\leq i\leq N$, $\alpha_i$ necessarily divides $g$.  Since $\alpha_1,\ldots,\alpha_N$ are relatively prime in $S$, this will show that RHS of \eqref{eq:integral2} does indeed lie in $S$.  Write $V_\Gamma=V_1\sqcup V_2$ where $V_1$ denotes the set of vertices $q$ such that for some $e\in E^q$, $\alpha(e)$ and $\alpha_i$ are collinear.  Note that the vertices in $V_1$ come in pairs.  Indeed if $p\in V_1$ then there is a unique $e\in E^p$ with $\alpha(e)=\lambda\cdot\alpha_i$, hence $q\coloneqq t(e)\in V_1$ as well.  Thus \eqref{eq:integral2} can be written as 
\begin{equation}
\label{eq:integral35}
\sum_{p\in V_1}\frac{f(p)}{c_p\prod_{e\in E^p}\alpha(e)}+\sum_{p\in V_2}\frac{f(p)}{c_p\prod_{e\in E^p}\alpha(e)}=\frac{g}{\alpha_1\cdots\alpha_N}
\end{equation}
and 
\begin{equation}
\label{eq:integral3}
\sum_{p\in V_1}\frac{f(p)}{c_p\prod_{e\in E^p}\alpha(e)}=\frac{1}{2}\sum_{\substack{pq\in E_\Gamma\\ \alpha(pq)=\lambda\alpha_i\\}}\left(\frac{f(p)}{c_p\prod_{e\in E^p}\alpha(e)}+\frac{f(q)}{c_q\prod_{e\in E^q}\alpha(e)}\right).
\end{equation}
Rewriting RHS of \eqref{eq:integral3} we get 
\begin{equation}
\label{eq:integral4}
\frac{1}{2}\sum_{\substack{pq\in E_\Gamma\\ \alpha(pq)=\lambda\alpha_i\\}}\left(\frac{f(p)c_q\prod_{\substack{e\in E^q\\ e\neq qp\\}}\alpha(e)-f(q)c_p\prod_{\substack{e\in E^p\\ e\neq pq\\}}\alpha(e)}{c_pc_q\cdot\alpha(pq)\cdot\prod_{\substack{e\in E^p\\ e\neq pq\\}}\alpha(e)\cdot\prod_{\substack{e\in E^q\\ e\neq qp\\}}\alpha(e)}\right).
\end{equation}

We observe that 
$$f(p)\equiv f(q) \ \ \ \text{mod} \ \alpha(pq)$$
and
$$c_p\prod_{\substack{e\in E^p\\ e\neq pq\\}}\alpha(e)\equiv c_q\prod_{\substack{e\in E^q\\ e\neq qp\\}}\alpha(e) \ \ \ \text{mod} \ \alpha(pq),$$
from which it follows that
$$f(p)c_q\prod_{\substack{e\in E^q\\ e\neq qp\\}}\alpha(e)\equiv f(q)c_p\prod_{\substack{e\in E^p\\ e\neq pq\\}}\alpha(e) \ \ \ \text{mod} \ \alpha(pq).$$
Hence the sum in \eqref{eq:integral4} can be rewritten as  
\begin{equation}
\label{eq:integral5}
\frac{1}{2}\sum_{\substack{pq\in E_\Gamma\\ \alpha(pq)=\lambda\alpha_i\\}}\frac{g_{pq}}{h_{pq}}
\end{equation}
for some $g_{pq},h_{pq}\in S$, where $h_{pq}$ is relatively prime to $\alpha_i$.

Plugging this observation into \eqref{eq:integral35} we get 
\begin{equation}
\label{eq:integral6}
\frac{1}{2}\sum_{\substack{pq\in E_\Gamma\\ \alpha(pq)=\lambda\alpha_i\\}}\frac{g_{pq}}{h_{pq}}+\sum_{p\in V_2}\frac{f(p)}{c_p\prod_{e\in E^p}\alpha(e)}=\frac{g}{\alpha_1\cdots\alpha_N}.
\end{equation}

Now since every denominator on the LHS of \eqref{eq:integral6} is relatively prime to $\alpha_i$, it follows that $\alpha_i$ must divide $g$.

\end{proof}

The function $\int_\Gamma\colon H(\Gamma,\alpha)\rightarrow S[-d]$ is called an \emph{integral operator} on $(\Gamma,\alpha,\theta,\lambda)$, and will play a fundamental role in this paper.  One interesting property of the integral operator is the following ``duality'' property, pointed out by Guillemin and Zara \cite{GZ2}:  

\begin{proposition}
\label{prop:StrDuality}
For a straight 1-skeleton $(\Gamma,\alpha,\theta,\lambda)$, an element $f\in\Maps(V_\Gamma,S)$ is in $H(\Gamma,\alpha)$ if and only if $\int_\Gamma f\cdot h\in S$ for all $h\in H(\Gamma,\alpha)$.
\end{proposition}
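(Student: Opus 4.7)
The plan is to prove both directions, with the forward direction being essentially immediate from what was just established and the reverse direction following from a clever choice of test class.

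For the forward direction, if $f\in H(\Gamma,\alpha)$, then since $H(\Gamma,\alpha)$ is a subring of $\Maps(V_\Gamma,S)$, the product $f\cdot h$ is again in $H(\Gamma,\alpha)$ for every $h\in H(\Gamma,\alpha)$. By Proposition \ref{prop:StrInt} (iii), straightness provides constants $\{c_p\}$ so that $\int_\Gamma g \in S$ for every $g\in H(\Gamma,\alpha)$; applying this to $g=f\cdot h$ finishes this direction.

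For the reverse direction, assume $\int_\Gamma f\cdot h\in S$ for every $h\in H(\Gamma,\alpha)$. To conclude $f\in H(\Gamma,\alpha)$, I must verify the compatibility condition $f(q)-f(p)\in\alpha(pq)\cdot S$ at each oriented edge $pq\in E_\Gamma$. The key idea is to take $h=\sigma_{pq}$, the edge Thom class defined in \eqref{eq:EdgeClass}; straightness guarantees via Proposition \ref{prop:ThomNormal} (or directly via the cocycle condition, using the convention $\lambda_{pq}(pq)=1$) that $\sigma_{pq}\in H(\Gamma,\alpha)$. Since $\sigma_{pq}$ is supported only on $\{p,q\}$, the integral collapses to two terms. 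Using $\sigma_{pq}(p)=\prod_{e\in E^p,\,e\neq pq}\alpha(e)$, $\sigma_{pq}(q)=|K_{pq}|\prod_{e\in E^q,\,e\neq qp}\alpha(e)$, the identity $\alpha(qp)=-\alpha(pq)$, and the relation $c_q=c_p\cdot|K_{pq}|$ from the proof of Proposition \ref{prop:StrInt}, a routine simplification yields
\begin{equation*}
\int_\Gamma f\cdot\sigma_{pq}\;=\;\frac{1}{c_p\,\alpha(pq)}\bigl(f(p)-f(q)\bigr).
\end{equation*}
By hypothesis this lies in $S$, and hence $\alpha(pq)$ divides $f(p)-f(q)$, which is exactly the required condition.

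The only real subtleties in carrying this out are bookkeeping: making sure $\sigma_{pq}$ genuinely defines an equivariant class (this is where straightness enters, via the matching of the two terms in \eqref{eq:EdgeClass}), and correctly combining the coefficients $c_p,c_q,|K_{pq}|$ so that the cross terms cancel. Once those are handled, each edge condition drops out from a single evaluation against the corresponding edge class, and the proposition follows. I expect no further difficulty beyond verifying these two points.
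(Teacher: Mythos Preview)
Your proof is correct and matches the paper's approach exactly: the forward direction is Proposition~\ref{prop:StrInt}(iii), and for the reverse direction the paper also tests against the edge Thom class $\sigma_{pq}$ and obtains $\int_\Gamma f\cdot\sigma_{pq}=\frac{f(p)-f(q)}{c_p\,\alpha(pq)}$ via the identity $c_q=c_p|K_{pq}|$. One small remark: edge Thom classes exist for \emph{any} 1-skeleton (the cocycle condition \eqref{eq:CocycleCondition} is automatic for a single edge), so straightness is not needed to guarantee $\sigma_{pq}\in H(\Gamma,\alpha)$; it is used only to produce the integration constants $\{c_p\}$ and the relation $c_q=c_p|K_{pq}|$.
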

\begin{proof}
Fix $f\in \Maps(V_\Gamma,S)$, and suppose that $\int_\Gamma f\cdot h\in S$ for every $h\in H(\Gamma,\alpha)$.  Let $pq\in E_\Gamma$ be any edge, and let $h$ denote its Thom class with $h(p)\coloneqq\prod_{\substack{e\in E^p\\ e\neq pq\\}}\alpha(e)$.  Then we have 
\begin{align*}
\int_\Gamma f\cdot h= & \frac{f(p)}{c_p\alpha(pq)}+\frac{\prod_{e\in E^p}\lambda_{pq}(e)f(q)}{c_q\alpha(qp)}\\
= & \frac{f(p)-f(q)}{c_p\alpha(pq)},
\end{align*}
where the second inequality follows from the identity $\frac{c_q}{c_p}=\prod_{e\in E^p}\lambda_{pq}(e)$.  Hence if $\int_\Gamma f\cdot h\in S$, we must have that $f(p)-f(q)=c_{pq}\alpha(pq)$.  The other implication is simply (iii) in Proposition \ref{prop:StrInt}.
\end{proof}

\subsubsection{Generating Classes, Weak Generating Classes, and the Morse Package}
Fix a generic polarizing covector $\xi$, and a compatible Morse function $\phi$.  Define the \emph{flow-up at $p$} to be those vertices that are endpoints of some $\xi$-oriented path starting at $p$, denoted by $\mathcal{F}_p$.  Denote by $F_p$ those vertices whose $\phi$-value is at least the $\phi$-value of $p$.  Note that in general we have the containment $\mathcal{F}_p\subseteq F_p$.  

A homogeneous equivariant class $\tau_p\in H(\Gamma,\alpha)$ of degree $\ind_\xi(p)$ is called a \emph{generating class} for $p$ if
\begin{enumerate}
\item ${\dsp\supp(\tau_p)\subseteq\mathcal{F}_p}$, and 
\item ${\dsp \tau_p(p)=\prod_{e\in E^p_-}\alpha(e).}$
\end{enumerate}

A homogeneous equivariant class $\tau_p\in H(\Gamma,\alpha)$ of degree $\ind_{\xi}(p)$ is called a \emph{weak generating class} for $p$ if
\begin{enumerate}
\item ${\dsp \supp(\tau_p)\subseteq F_p}$, and
\item ${\dsp \tau_p(p)=\prod_{e\in E^p_-}\alpha(e).}$
\end{enumerate}

A \emph{generating family}, resp. \emph{weak generating family}, is a collection of generating classes, resp. weak generating classes, one for each vertex $\left\{\tau_p\right\}_{p\in V_\Gamma}$.  

The following result is essentially due to Guillemin and Zara \cite[Theorem 2.4.2]{GZ1}.
\begin{proposition}
\label{prop:MorsePackage}
The following are equivalent:
\begin{enumerate}
\item $H(\Gamma,\alpha)$ is a free $S$-module with $b_i(\Gamma,\alpha)$ generators in degree $i$.
\item $(\Gamma,\alpha,\theta,\lambda)$ admits a weak generating family.
\item $(\Gamma,\alpha,\theta,\lambda)$ admits a generating family.
\end{enumerate}
\end{proposition}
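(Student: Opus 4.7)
The plan is to prove the cyclic implications (iii) $\Rightarrow$ (ii) $\Rightarrow$ (i) $\Rightarrow$ (iii). The first step is tautological: since $\mathcal{F}_p\subseteq F_p$ always holds, every generating class is automatically a weak generating class.

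For (ii) $\Rightarrow$ (i), I would order the vertices $p_1,\ldots,p_N$ in strictly increasing $\phi$-order and exploit the upper-triangular structure of the matrix $\bigl(\tau_{p_i}(p_j)\bigr)$. The support condition $\supp(\tau_{p_i})\subseteq F_{p_i}$ gives $\tau_{p_i}(p_j)=0$ for $j<i$, while $\tau_{p_i}(p_i)=\prod_{e\in E^{p_i}_-}\alpha(e)$ is nonzero in the integral domain $S$. Linear independence over $S$ follows by evaluating any relation $\sum g_i\tau_{p_i}=0$ at the least-index $p_i$ with $g_i\neq 0$, which forces $g_i\cdot\tau_{p_i}(p_i)=0$ and hence $g_i=0$. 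For spanning, given nonzero $f\in H(\Gamma,\alpha)$, pick the least $i$ with $f(p_i)\neq 0$; the cocycle condition applied to each edge $p_ip_j$ with $j<i$ (equivalently, to each $e\in E^{p_i}_-$) forces $\alpha(e)\mid f(p_i)$, and pairwise linear independence of these axial vectors supplies the full product divisibility $\tau_{p_i}(p_i)\mid f(p_i)$. Writing $f(p_i)=g_i\tau_{p_i}(p_i)$ and replacing $f$ by $f-g_i\tau_{p_i}$ preserves vanishing on $p_1,\ldots,p_{i-1}$ and additionally zeroes $p_i$; iterating exhausts $V_\Gamma$. The rank count $b_i(\Gamma,\alpha)$ in degree $i$ follows from $\deg\tau_{p_i}=\ind_\xi(p_i)$.

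For (i) $\Rightarrow$ (iii), which is the main obstacle, I would first extract a weak generating family from freeness via Poincar\'{e} series, then refine it to an honest generating family by a reverse-induction killing procedure. For the weak family, let $H_{\geq p}(\Gamma,\alpha)$ denote the submodule of classes supported on $F_p$, and consider the filtration $0\subseteq H_{\geq p_N}\subseteq\cdots\subseteq H_{\geq p_1}=H(\Gamma,\alpha)$ with successive quotients $Q_i\coloneqq H_{\geq p_i}/H_{\geq p_{i+1}}$. Evaluation at $p_i$ injects each $Q_i$ into $S$, and the cocycle argument above (applied to edges in $E^{p_i}_-$, which terminate at vertices where every $f\in H_{\geq p_i}$ must vanish) shows the image lies in the principal ideal $\bigl(\prod_{e\in E^{p_i}_-}\alpha(e)\bigr)\cong S[-\ind_\xi(p_i)]$. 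Summing Poincar\'{e} series yields $P\bigl(H(\Gamma,\alpha)\bigr)\leq\sum_i\frac{t^{\ind_\xi(p_i)}}{(1-t)^n}=\frac{\sum_i b_i(\Gamma,\alpha)\,t^i}{(1-t)^n}$, and hypothesis (i) forces equality throughout, so each injection $Q_i\hookrightarrow S[-\ind_\xi(p_i)]$ is an isomorphism and weak generating classes $\tau_p$ exist.

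Finally, starting from the maximum vertex $p_N$ (where $\mathcal{F}_{p_N}=\{p_N\}$ so the vertex Thom class already qualifies) and proceeding downward in $\phi$-order, I upgrade each $\tau_p$ to a generating class $\sigma_p$ by subtracting correction terms $g_q\sigma_q$ indexed by $q\in F_p\setminus\mathcal{F}_p$, processed in increasing $\phi$-order. The hard part will be the divisibility $\sigma_q(q)\mid\tau_p(q)$ that ensures $g_q\coloneqq\tau_p(q)/\sigma_q(q)\in S$. The key geometric observation is that no $e\in E^q_-$ can terminate inside $\mathcal{F}_p$: otherwise reversing $e$ and concatenating with an oriented path from $p$ to its terminal would yield an oriented path from $p$ to $q$, forcing $q\in\mathcal{F}_p$. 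Thus every such terminal lies either outside $F_p$ (where $\tau_p$ vanishes by construction) or in $F_p\setminus\mathcal{F}_p$ at strictly lower $\phi$-value (already processed to vanish). The cocycle condition then gives $\alpha(e)\mid\tau_p(q)$ for every $e\in E^q_-$, and coprimality supplies the product divisibility. Since $\supp(\sigma_q)\subseteq\mathcal{F}_q$ and $p\notin\mathcal{F}_q$ (as $\phi(q)>\phi(p)$), the subtraction preserves $\tau_p(p)$; it also preserves the earlier-zeroed values. Iterating yields the desired generating class $\sigma_p$.
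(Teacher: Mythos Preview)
Your proposal is correct and uses essentially the same three ingredients as the paper---the upper-triangular basis argument, the Poincar\'e-series dimension count forcing each filtration quotient to be all of $S[-\ind_\xi(p_i)]$, and the reverse-induction refinement subtracting off generating classes at higher vertices---just arranged into the cycle (iii)$\Rightarrow$(ii)$\Rightarrow$(i)$\Rightarrow$(iii) rather than the paper's (i)$\Rightarrow$(ii)$\Rightarrow$(iii)$\Rightarrow$(i). Your geometric justification that no $e\in E^q_-$ can terminate in $\mathcal{F}_p$ is exactly the observation the paper tacitly invokes when it writes ``by our choice of $q_0$ we must have $\tau_{p_k}(t_i)=0$,'' so the refinement step is the same argument.
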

\begin{proof}
{(i) $\Rightarrow$ (ii).}  Label the vertices $V_\Gamma=\left\{p_1,\ldots,p_N\right\}$ so that $\phi(p_1)<\cdots<\phi(p_N)$.  Define the submodule filtration of $H(\Gamma,\alpha)$ by setting $H_0\coloneqq H(\Gamma,\alpha)$, $H_{N}\coloneqq \left\{0\right\}$, and for each $1\leq i\leq N-1$ define submodule  
$$H_{i}\coloneqq\left\{f\in H(\Gamma,\alpha)\left| \supp(f)\subseteq F_{p_{i+1}}\right.\right\}.$$
Then for each $0\leq i\leq N-1$ we have an exact sequence of graded vector spaces
\begin{equation}
\label{eq:MorseExact}
\xymatrix{ 0\ar[r] & H_{i+1}\ar[r] & H_{i}\ar[r]^-{\epsilon_{i}} & \prod_{e\in E^{p_{i+1}}_-}\alpha(e)\cdot S}
\end{equation}
where the first map is inclusion, and second map, $\epsilon_{i+1}$, is evaluation at the vertex $p_{i+1}$.  Restricting to the $m^{th}$ graded pieces in \eqref{eq:MorseExact} we get the inequalities:
\begin{equation}
\label{eq:MorseIneq}
\dim_\R\left(H_i^m(\Gamma,\alpha)\right)-\dim_\R\left(H^m_{{i+1}}\right)\leq\dim_\R\left(S^{m-\sigma_{p_{i+1}}}\right), \ \ 0\leq i\leq N-1.
\end{equation}
Summing \eqref{eq:MorseIneq} over $i$, we get the inequality
\begin{align}
\label{eq:MorseIneq2}
\dim_\R\left(H^m(\Gamma,\alpha)\right)\leq & \sum_{p\in V_\Gamma}\dim_\R\left(S^{m-\sigma_p}\right)\\
\nonumber= & \sum_{j=0}^d b_j(\Gamma,\alpha)\dim_\R\left(S^{m-j}\right).
\end{align}
If $H(\Gamma,\alpha)$ is a free $S$-module with $b_i(\Gamma,\alpha)$ generators in degree $i$, then the inequality in \eqref{eq:MorseIneq2} must be an equality.  But this implies that the inequalities in \eqref{eq:MorseIneq} are also equalities, which, in turn implies that the evaluation map $\epsilon_{p_{i+1}}$ must be surjective for $0\leq i\leq N-1$.    On the other hand, the surjectivity of $\epsilon_{p_{i+1}}$ exactly means that vertex $p_{i+1}$ has a weak generating class for $0\leq i\leq N-1$.

{(ii) $\Rightarrow$ (iii).}  Let $\left\{\tau_p\right\}_{p\in V_\Gamma}$ be a weak generating family for $(\Gamma,\alpha,\theta,\lambda)$, and label the vertices as before, with $\phi(p_1)<\cdots<\phi(p_N)$.  We will construct a generating family $\left\{\kappa_p\right\}_{p\in V_\Gamma}$ from the weak generating family $\left\{\tau_p\right\}_{p\in V_\Gamma}$.  Note that $\kappa_{p_N}\coloneqq\tau_{p_N}$ is already a generating class for $p_N$, since $\mathcal{F}_{p_N}=F_{p_N}=\left\{p_N\right\}$.  Inductively, we assume that we have generating classes $\kappa_{p_N},\kappa_{p_{N-1}},\ldots,\kappa_{p_{k+1}}$ for the vertices $p_N,\ldots,p_{k+1}$, respectively, and we will construct a generating class for $p_k$ starting from the weak generating class $\tau_{p_k}$.  Note that if $\supp\left(\tau_{p_k}\right)\subseteq\mathcal{F}_{p_k}\subseteq F_p$ then $\kappa_{p_k}\coloneqq\tau_{p_k}$ is already a generating class and we are done.  Otherwise there is a vertex $q_0$ in $\supp\left(\tau_{p_k}\right)\setminus \mathcal{F}_{p_k}$ whose $\phi$-value is smallest.  If $q_0t_1,\ldots,q_0t_r$ are the oriented edges at $q_0$ that flow into $q_0$, note that by our choice of $q_0$ we must have $\tau_{p_k}(t_i)=0$ for $1\leq i\leq r$.  Hence there exists a $c_0\in S$ such that 
$$\tau_{p_k}(q_0)=c_0\prod_{e\in E^{q_0}_-}\alpha(e).$$
Since $q_0\in F_{p_k}$ we must have $\phi(p_k)<\phi(q_0)$, hence, by our inductive hypothesis, $q_0$ has a generating class $\kappa_{q_0}$.  Define the new class 
$$\tau_{p_k,1}\coloneqq\tau_{p_k}-c_0\kappa_{q_0}.$$
Now if $\supp\left(\tau_{p_k,1}\right)\subseteq\mathcal{F}_{p_k}$ we are done.  Otherwise we proceed as before, and choose the $\phi$-smallest vertex $q_1\in\supp\left(\tau_{p_k,1}\right)\setminus\mathcal{F}_{p_k}$.  Note that $\phi(q_0)<\phi(q_1)$, since $\supp\left(\tau_{p_k,1}\right)=\left(\supp\left(\tau_{p_k}\right)\cup\supp\left(\kappa_{p_k}\right)\right)\setminus\{q_0\}$.  Hence this process must terminate after finitely many, say $\ell$, iterations yielding a homogeneous class
$$\tau_{p_k,\ell}\coloneqq\tau_{p_k}-c_0\kappa_{q_0}-c_1\kappa_{q_1}-\cdots-c_{\ell-1}\kappa_{q_{\ell-1}}$$
which must be a generating class for $p_k$.

{(iii) $\Rightarrow$ (i).}  Let $\left\{\tau_p\right\}_{p\in V_\Gamma}$ be a generating family.  Suppose we have a nontrivial $S$-linear dependence relation 
\begin{equation}
\label{eq:DepRel}
\sum_{p\in V_\Gamma}c_p\tau_p=0,
\end{equation}
and let $p_0$ be the smallest vertex for which $c_{p_0}\neq 0$.  Then we must have $\tau_{p_0}=-\frac{1}{c_{p_0}}\left(\sum_{\phi(q)>\phi(p_0)} c_q\tau_q\right)$.  But this is impossible since $p_0\notin\supp(\tau_q)$ for $\phi(q)>\phi(p_0)$.  Therefore the generating family must be $S$-linearly independent.  To see that the generating family spans, fix a homogeneous $f\in H(\Gamma,\alpha)$, and let $p$ be the smallest vertex in $\supp(f)$.  Then $f(p)$ must equal $s_p\cdot\prod_{e\in E^p_-}\alpha(e)$ for some $s_p\in S$, and $f-s_p\cdot\tau_p$ is therefore another class which is supported on vertices strictly larger than $p$.  Proceeding this way, we will eventually run out of vertices and end up with a class with empty support, i.e. the zero class.  In other words we will have $f-\sum_{p\in V_\Gamma}s_p\tau_p=0$, which proves that the generating family spans $H(\Gamma,\alpha)$.
This shows that $H(\Gamma,\alpha)$ is a free $S$-module with a basis $\left\{\tau_p\right\}_{p\in V_\Gamma}$.  Since the degree of $\tau_p$ is $\ind_\xi(p)$, we see that there are $b_i(\Gamma,\alpha)$ generators in degree $i$.
\end{proof}

\begin{definition}
\label{def:MorsePackage}
A 1-skeleton $(\Gamma,\alpha,\theta,\lambda)$ in $\R^n$ that satisfies any of the conditions in Proposition \ref{prop:MorsePackage} is said to have the \emph{Morse package}.
\end{definition}

Note that if $(\Gamma,\alpha,\theta,\lambda)$ has the Morse package, then $b_0(\Gamma,\alpha)=1$, i.e. the 1-skeleton is pointed.  Indeed any equivariant class of degree zero must be a constant (since $\Gamma$ is connected), hence any two classes in $H^0(\Gamma,\alpha)$ must be constant multiples of each other.  Thus, up to a constant, $H(\Gamma,\alpha)$ has exactly one generator in degree zero.  By symmetry of the combinatorial Betti numbers we deduce that $b_d(\Gamma,\alpha)=1$ as well.

\begin{proposition}
\label{prop:StMor}
If $(\Gamma,\alpha,\theta,\lambda)$ has the Morse package, then it is straight.
\end{proposition}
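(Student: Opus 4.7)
The plan is to invoke the equivalence (i) $\Leftrightarrow$ (ii) of Proposition \ref{prop:StrInt}: straightness of $(\Gamma,\alpha,\theta,\lambda)$ will follow once I exhibit a single vertex whose Thom class is non-vanishing in ordinary cohomology. I will extract such a vertex and Thom class directly from the generating family provided by the Morse package, so the entire proof reduces to identifying one convenient basis element.

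Fix a generic polarizing covector $\xi$ and let $\left\{\tau_p\right\}_{p\in V_\Gamma}$ be a generating family, whose existence is ensured by Proposition \ref{prop:MorsePackage}. As observed immediately after Definition \ref{def:MorsePackage}, the Morse package forces $(\Gamma,\alpha,\theta,\lambda)$ to be pointed, so there is a unique sink $p_1\in V_\Gamma$. For this sink we have $E^{p_1}_-=E^{p_1}$ and $\mathcal{F}_{p_1}=\{p_1\}$, so the defining properties of a generating class force $\supp(\tau_{p_1})=\{p_1\}$ and $\tau_{p_1}(p_1)=\prod_{e\in E^{p_1}}\alpha(e)$. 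Comparing with \eqref{eq:ThomClassDescription}, I see that $\tau_{p_1}$ is precisely the vertex Thom class $T_{p_1}$ (with $t_{p_1}=1$).

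It remains to check that $\bar{T}_{p_1}\neq 0$ in $\overline{H(\Gamma,\alpha)}$. But $\left\{\tau_p\right\}$ is an $S$-module basis of $H(\Gamma,\alpha)$, and in any free module the image of a basis element in the quotient by $S^+\cdot H(\Gamma,\alpha)$ is non-zero: if $\tau_{p_1}$ lay in $S^+\cdot H(\Gamma,\alpha)$, its unique expansion in the basis $\{\tau_p\}$ would have coefficient $1\in S^+$ on $\tau_{p_1}$, a contradiction. Thus $p_1$ admits a non-vanishing Thom class, and (ii) $\Rightarrow$ (i) of Proposition \ref{prop:StrInt} gives straightness.

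There is no real obstacle; the only subtlety is recognizing that at the sink vertex the support and value constraints for a generating class collapse to exactly the description of a vertex Thom class, after which the conclusion is formal.
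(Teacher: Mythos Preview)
Your proof is correct and follows essentially the same approach as the paper: both identify the generating class at the unique sink vertex as a vertex Thom class and observe that, being part of a free $S$-module basis, its image in ordinary cohomology is non-zero, whence straightness follows from Proposition~\ref{prop:StrInt}. Your version is simply a more explicit unpacking of the paper's one-line argument (and you cite the correct proposition; the paper's reference to Proposition~\ref{prop:Straightness} appears to be a typo for Proposition~\ref{prop:StrInt}).
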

\begin{proof}
Since $b_d(\Gamma,\alpha)=1$ we know that there is a non-vanishing Thom class on a vertex of $\Gamma$; it is the generating class for the unique maximum vertex with respect to the Morse function $\phi$.  Hence by Proposition \ref{prop:Straightness}, $(\Gamma,\alpha,\theta,\lambda)$ must be straight.
\end{proof}

In fact we also have the following:
\begin{proposition}
\label{prop:StMor2}
If every $2$-slice of $(\Gamma,\alpha,\theta,\lambda)$ has the Morse package, then $(\Gamma,\alpha,\theta,\lambda)$ must be straight.
\end{proposition}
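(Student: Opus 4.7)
The plan is to combine Proposition \ref{prop:Straightness} with Proposition \ref{prop:StMor} applied to each $2$-slice individually. The key observation is that the Morse package forces pointedness, which is exactly the non-cyclicity hypothesis needed to invoke Proposition \ref{prop:Straightness}.

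First, I would check that each $2$-slice $(\Gamma_H^0,\alpha_H^0,\theta_H^0,\lambda_H^0)$ satisfies the acyclicity axiom, so that it makes sense to speak of its Morse package. Since $(\Gamma,\alpha,\theta,\lambda)$ is assumed to satisfy the acyclicity axiom, there is a generic polarizing covector $\xi\in(\R^n)^*$ inducing a $\xi$-acyclic orientation on $\Gamma$. The restriction of this orientation to the subgraph $\Gamma_H^0$ is clearly still acyclic (a loop in $\Gamma_H^0$ would be a loop in $\Gamma$), and the restricted covector polarizes the $2$-slice, so the $2$-slice inherits the acyclicity axiom.

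Next, I would use the hypothesis together with the remark following Proposition \ref{prop:MorsePackage} to deduce that every $2$-slice is pointed: if a $2$-slice has the Morse package, then $b_0=b_2=1$, which is precisely the definition of pointedness. Since this holds for every $2$-slice by assumption, the 1-skeleton $(\Gamma,\alpha,\theta,\lambda)$ is non-cyclic in the sense defined just before Proposition \ref{prop:Straightness}.

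Finally, by Proposition \ref{prop:StMor} applied to each $2$-slice, each $2$-slice is straight. Combining non-cyclicity of $(\Gamma,\alpha,\theta,\lambda)$ with straightness of every $2$-slice, Proposition \ref{prop:Straightness} then yields that $(\Gamma,\alpha,\theta,\lambda)$ itself is straight. There is no real obstacle here since all the hard work is already embedded in Propositions \ref{prop:Straightness} and \ref{prop:StMor}; the only mild technical point is verifying that the Morse package hypothesis on $2$-slices does in fact imply pointedness for each of them, but this is immediate from the symmetry $b_i=b_{d-i}$ of the combinatorial Betti numbers applied in dimension $d=2$.
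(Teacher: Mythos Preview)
Your proposal is correct and follows essentially the same route as the paper: the Morse package on each $2$-slice yields pointedness (hence non-cyclicity of the ambient 1-skeleton) and straightness of each $2$-slice, after which Proposition~\ref{prop:Straightness} finishes the argument. One small caveat: a $2$-slice need not be $2$-valent, so the claim ``$b_0=b_2=1$'' should read $b_0=b_{d'}=1$ where $d'$ is the valency of the slice---but this does not affect the argument, since pointedness only requires $b_0=1$.
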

\begin{proof}
A $2$-slice that has the Morse package must be straight and pointed from our discussion above.  Hence a 1-skeleton whose $2$-slices have the Morse package must be non-cyclic with straight $2$-slices.  It follows from Proposition \ref{prop:Straightness} that the 1-skeleton must be straight itself.
\end{proof}

\subsection{Residues}
Following \cite{GZ0}:
Let $A$ be any integral domain, and let $A[x]$ be the ring of polynomials in one variable with coefficients in $A$.  For fixed $z\in A$, the fraction $\frac{1}{x-z}$ has a unique formal power series expansion about $x=\infty$: 
\begin{align}
\label{eq:FormalLS}
\nonumber\frac{1}{x-z}= & x^{-1}\left(\sum_{n\geq 0}z^nx^{-n}\right)\\
= & \sum_{n\geq 1}z^{n-1}x^{-n}.
\end{align}
Hence for any $f(x)\in A[x]$ and any $z_1,\ldots,z_m\in A$, the fraction $\frac{f(x)}{(x-z_1)\cdots(x-z_m)}$ has a unique formal Laurent series expansion about $x=\infty$:
\begin{align}
\label{eq:FormalLS1}
\nonumber\frac{f(x)}{(x-z_1)\cdots(x-z_m)}= & f(x)\cdot\left(\sum_{n\geq 1} z_1^{n-1}x^{-n}\right)\cdots\left(\sum_{n\geq 1}z_m^{n-1}x^{-n}\right)\\
= & \sum_{n\geq -N}a_nx^{-n}.
\end{align}
    Define the \emph{residue at $x=\infty$} of the quotient $\frac{f(x)}{(x-z_1)\cdots(x-z_m)}$ to be the coefficient of $x^{-1}$ in its Laurent expansion, i.e.
$$\Res_{x=\infty}\left(\frac{f(x)}{(x-z_1)\cdots(x-z_m)}\right)\coloneqq\Res_{x=\infty}\left(\sum_{n\geq -N} a_nx^{-n}\right)\coloneqq a_1.$$

The following two key facts are due to Guillemin and Zara \cite{GZ0}, and we refer the reader there for the proofs.

\begin{proposition}
\label{thm:Residue}
In the notation above, if $z_1,\ldots,z_m$ are distinct elements of $A$, then we have
\begin{equation}
\label{eq:Residue}
\Res_{x=\infty}\left(\frac{f(x)}{(x-z_1)\cdots(x-z_m)}\right)=\sum_{k=1}^m\frac{f(z_k)}{\prod_{j\neq k}(z_k-z_j)}.
\end{equation}
\end{proposition}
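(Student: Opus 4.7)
The plan is to reduce the computation to the case of simple poles via partial fractions, read off the coefficient of $x^{-1}$ from the formal Laurent expansion term by term, and then identify each partial-fraction coefficient in closed form.

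First, since $z_1,\ldots,z_m$ are distinct elements of the integral domain $A$, the differences $z_k-z_j$ ($j\neq k$) are nonzero, so we may work in the fraction field of $A$ and perform a partial fraction decomposition
$$\frac{f(x)}{(x-z_1)\cdots(x-z_m)}=p(x)+\sum_{k=1}^m\frac{c_k}{x-z_k},$$
where $p(x)\in A[x]$ is the polynomial quotient (which is zero when $\deg f<m$). The standard way to pin down the coefficients $c_k$ is to multiply both sides by $(x-z_k)$ and specialize $x=z_k$; this gives
$$c_k=\frac{f(z_k)}{\prod_{j\neq k}(z_k-z_j)}.$$

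Next I would compute the residue at $x=\infty$ of each summand. The polynomial part $p(x)$ only involves non-negative powers of $x$, so it contributes $0$ to the coefficient of $x^{-1}$. For each simple pole, formula \eqref{eq:FormalLS} gives
$$\frac{c_k}{x-z_k}=c_k\sum_{n\geq 1}z_k^{n-1}x^{-n},$$
so the coefficient of $x^{-1}$ in this expansion is exactly $c_k$ (the $n=1$ term). Summing over $k$ and substituting the formula for $c_k$ yields
$$\Res_{x=\infty}\left(\frac{f(x)}{(x-z_1)\cdots(x-z_m)}\right)=\sum_{k=1}^m c_k=\sum_{k=1}^m\frac{f(z_k)}{\prod_{j\neq k}(z_k-z_j)},$$
which is \eqref{eq:Residue}.

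There is no serious obstacle here; the only thing to be careful about is the polynomial part in the partial fraction decomposition when $\deg f \ge m$, but since that part is a polynomial in $x$ it cannot contribute a coefficient of $x^{-1}$, so it drops out. A secondary point is to check that the formal Laurent expansion defining the residue is compatible with (is in fact computed by) the partial fraction decomposition; this follows because both the partial fraction identity and the geometric expansion \eqref{eq:FormalLS} are identities of formal Laurent series in $x^{-1}$ with coefficients in the fraction field of $A$, and the expansion is unique.
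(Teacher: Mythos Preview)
Your argument is correct. The partial-fraction decomposition over the fraction field of $A$ is valid because the $z_k$ are distinct (so the denominators $z_k-z_j$ are units in $Q(A)$), and your observation that the polynomial part $p(x)$ lies in $A[x]$ is justified since the divisor $(x-z_1)\cdots(x-z_m)$ is monic. Reading off the $x^{-1}$ coefficient from each term via \eqref{eq:FormalLS} is straightforward, and the uniqueness of the formal Laurent expansion in $x^{-1}$ over $Q(A)$ ensures the term-by-term computation is legitimate.

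As for comparison with the paper: the paper does not actually prove this proposition but simply cites \cite[Lemma 1]{GZ0}, so there is no in-paper argument to compare against. Your partial-fraction approach is the standard one and is essentially what Guillemin and Zara do in \cite{GZ0}, so you have supplied the missing details correctly.
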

\begin{proof}
See \cite[Lemma 1]{GZ0}.
\end{proof}

\begin{proposition}
\label{prop:R2}
The quotient $h(x)=\frac{f(x)}{(x-z_1)\cdots(x-z_m)}$ is in $A[x]$ if and only if $\Res_\xi\left(x^kh(x)\right)=0$ for all $k\geq 0$.
\end{proposition}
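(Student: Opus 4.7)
The plan is to read the condition directly off the unique formal Laurent expansion of $h(x)$ at $x=\infty$ defined in \eqref{eq:FormalLS1}. Writing
$$h(x)=\sum_{n\geq -N} a_n x^{-n},$$
multiplication by $x^k$ simply shifts the coefficient indices, so that
$$x^kh(x)=\sum_{n\geq -N} a_n x^{k-n}=\sum_{m\geq -N-k} a_{m+k} x^{-m},$$
and by the definition of the residue at infinity, $\Res_{x=\infty}(x^kh(x))=a_{k+1}$.

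The hypothesis that $\Res_{x=\infty}(x^kh(x))=0$ for every $k\geq 0$ is therefore exactly the statement that $a_n=0$ for every $n\geq 1$. In that case the Laurent expansion of $h(x)$ collapses to the finite sum $p(x)\coloneqq\sum_{n=-N}^{0} a_n x^{-n}$, which is an element of $A[x]$.

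The remaining step is to promote this equality of formal Laurent series $h(x)=p(x)$ to an equality of rational functions. Multiplying through by the monic polynomial $g(x)=(x-z_1)\cdots(x-z_m)$ and using that the formal expansion respects polynomial multiplication (the denominator is monic, so no convergence issues arise) gives an equality of formal Laurent series $f(x)=p(x)\cdot g(x)$; since both sides are already polynomials in $A[x]$, this is an honest polynomial identity, whence $h(x)=f(x)/g(x)=p(x)\in A[x]$. Conversely, if $h(x)\in A[x]$ from the outset then $x^kh(x)\in A[x]$ for every $k\geq 0$, so its Laurent expansion has no negative powers of $x$ and its coefficient of $x^{-1}$ is automatically zero.

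The only point requiring any care is the last one, namely that equality of formal Laurent expansions about $x=\infty$ lifts to equality of rational functions; but since $g(x)$ is monic this is routine bookkeeping rather than a genuine obstacle, so the proposition follows essentially by unpacking the definition of $\Res_{x=\infty}$.
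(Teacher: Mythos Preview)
Your argument is correct. The paper does not actually supply a proof of this proposition; it simply refers the reader to \cite[Lemma 2]{GZ0}. Your direct unpacking of the Laurent expansion at $x=\infty$---identifying $\Res_{x=\infty}(x^k h(x))$ with the coefficient $a_{k+1}$ and then lifting the resulting polynomial identity back through the monic denominator---is exactly the intended mechanism, and nothing further is needed.
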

\begin{proof}
See \cite[Lemma 2]{GZ0}.
\end{proof}

In our applications, $A$ will be a graded ring.  To distinguish between the grading on $A$, and the natural grading on $A[x]$, we define the \emph{$x$-degree} of $f(x)\in A[x]$ in the usual way, as the largest non-negative integer $N$ for which $f(x)=\sum_{j=0}^N a_jx^j$ with $a_j\in A$ and $a_N\neq 0$.  

\begin{lemma}
\label{lem:xdeg}
If the $x$-degree of $f(x)\in A[x]$ is less than $m-1$, then 
$$\Res_{x=\infty}\left(\frac{f(x)}{(x-z_1)\cdots(x-z_m)}\right)=0.$$
\end{lemma}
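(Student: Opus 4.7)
The plan is to compute the Laurent expansion at $x=\infty$ directly from the definition in \eqref{eq:FormalLS1} and observe that it starts in a power of $x^{-1}$ strictly greater than one, so that the coefficient of $x^{-1}$ must vanish.

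First I would factor out the leading power of $x$ from the denominator:
$$\frac{1}{(x-z_1)\cdots(x-z_m)} = x^{-m}\prod_{i=1}^m \frac{1}{1 - z_i x^{-1}},$$
and expand each factor as a geometric series in $x^{-1}$ inside $A[[x^{-1}]]$, namely $(1-z_ix^{-1})^{-1} = \sum_{k\geq 0} z_i^k x^{-k}$. Multiplying these $m$ power series together (each has constant term $1$), the result is a power series in $x^{-1}$ with constant term $1$, so the denominator's Laurent expansion has the form
$$\frac{1}{(x-z_1)\cdots(x-z_m)} = \sum_{n \geq m} b_n x^{-n}, \qquad b_m = 1, \; b_n \in A.$$

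Next, writing $f(x) = \sum_{j=0}^N a_j x^j$ with $N < m-1$ (so $N \leq m-2$), the product gives
$$h(x) \;\coloneqq\; \frac{f(x)}{(x-z_1)\cdots(x-z_m)} \;=\; \sum_{j=0}^N \sum_{n \geq m} a_j b_n \, x^{j-n}.$$
The coefficient of $x^{-1}$ is the sum of those terms with $n - j = 1$, that is $\sum_{j=0}^N a_j b_{j+1}$. But each index satisfies $j + 1 \leq N + 1 \leq m - 1 < m$, so every $b_{j+1}$ appearing here is zero by our description of the Laurent expansion of the denominator. Hence the coefficient of $x^{-1}$ vanishes, which by the definition just above \eqref{eq:FormalLS1} is precisely $\Res_{x=\infty}(h(x)) = 0$.

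There is no real obstacle here beyond bookkeeping; the only mild care needed is to verify that the formal Laurent series manipulations are legitimate in $A[[x^{-1}]]$, which is clear since each coefficient in the expansion of $\prod_i (1 - z_i x^{-1})^{-1}$ is a finite sum of products in $A$ (no distinctness or invertibility assumptions on the $z_i$ are required). In particular this proof does not invoke Proposition \ref{thm:Residue} and so applies uniformly, even when the $z_i$ coincide.
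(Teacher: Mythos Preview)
Your proof is correct and follows essentially the same approach as the paper: both expand each factor $(x-z_i)^{-1}$ as a geometric series in $x^{-1}$, observe that the resulting Laurent series for the full denominator begins at $x^{-m}$, and then read off that multiplying by a polynomial of $x$-degree at most $m-2$ cannot produce an $x^{-1}$ term. The only cosmetic difference is that the paper first reduces by linearity to the monomial case $f(x)=x^N$, whereas you treat a general $f$ directly; the bookkeeping is the same either way.
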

\begin{proof}
It suffices to show this for $f(x)=x^N$.  By \eqref{eq:FormalLS} we have
\begin{equation}
\label{eq:xdeg}
\frac{x^N}{(x-z_1)\cdots(x-z_m)}=x^{N-m}\left(\sum_{n=0}^\infty z_1^nx^{-n}\right)\cdots\left(\sum_{n=0}^\infty z_m^nx^{-n}\right)=x^{N-m}\cdot\sum_{n=0}^\infty a_nx^{-n}.
\end{equation}
Now if $N-m<-1$, it follows that $a_{-1}$ must equal zero in \eqref{eq:xdeg}.
\end{proof}

Proposition \ref{thm:Residue} and Lemma \ref{lem:xdeg} yield some useful identities in the field of fractions $Q(A)$.

\begin{corollary}
\label{cor:11}
For any distict elements $z_1,\ldots,z_m$ in $A$, we have 
\begin{equation}
\label{eq:11}
\sum_{i=1}^m\frac{z_i^{m-1}}{\prod_{j\neq i}(z_i-z_j)}=1.
\end{equation}
\end{corollary}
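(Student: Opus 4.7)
The plan is to read the desired identity as the residue of a single simple rational function, evaluated by both of the two methods supplied just above: Proposition \ref{thm:Residue} (sum-over-poles formula) and the direct Laurent expansion \eqref{eq:FormalLS1}. Concretely, take $f(x) = x^{m-1}$ in Proposition \ref{thm:Residue}. Because the $z_i$ are distinct, that proposition gives
\[
\sum_{i=1}^m\frac{z_i^{m-1}}{\prod_{j\neq i}(z_i-z_j)}
\;=\;\Res_{x=\infty}\!\left(\frac{x^{m-1}}{(x-z_1)\cdots(x-z_m)}\right),
\]
so the corollary reduces to showing that this residue equals $1$.

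To evaluate the residue directly, apply the geometric-series expansion \eqref{eq:FormalLS} to each factor $\tfrac{1}{x-z_k}$ and then multiply by $x^{m-1}$:
\[
\frac{x^{m-1}}{(x-z_1)\cdots(x-z_m)}
\;=\;x^{m-1}\prod_{k=1}^{m}\left(\sum_{n\ge 1}z_k^{\,n-1}x^{-n}\right)
\;=\;x^{-1}\prod_{k=1}^{m}\left(\sum_{n\ge 0}z_k^{\,n}x^{-n}\right).
\]
The coefficient of $x^{-1}$ in the resulting Laurent series is the constant term of $\prod_{k=1}^{m}\bigl(\sum_{n\ge 0}z_k^{\,n}x^{-n}\bigr)$, which is clearly $1$. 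By the definition of $\Res_{x=\infty}$, this is exactly the residue, so the right-hand side above equals $1$ and the corollary follows.

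There isn't really a substantive obstacle here: once one recognizes the left-hand side as a partial-fraction sum-over-poles expression, it is simply the residue at infinity of $x^{m-1}/\prod_k(x-z_k)$, and the only calculation is the trivial observation that the leading factor $x^{m-1}$ cancels the $x^{-m}$ coming out of the $m$ geometric series to leave $x^{-1}$ times a power series in $x^{-1}$ with constant term $1$. (As a sanity check: Lemma \ref{lem:xdeg} is sharp here, since the $x$-degree of $f(x)=x^{m-1}$ is exactly $m-1$, so the lemma does not force the residue to vanish, consistent with the answer $1$.)
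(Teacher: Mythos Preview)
Your proof is correct and is essentially identical to the paper's own argument: both compute $\Res_{x=\infty}\bigl(x^{m-1}/\prod_k(x-z_k)\bigr)$ in two ways, once via Proposition~\ref{thm:Residue} to get the left-hand side of \eqref{eq:11}, and once by multiplying out the geometric-series expansions to see that the coefficient of $x^{-1}$ is $1$. The only difference is the order of presentation.
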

\begin{proof}
By \eqref{eq:FormalLS} the Laurent series for the rational function $\frac{x^{m-1}}{(x-z_1)\cdots(x-z_m)}$ is the product given by 
\begin{equation}
\label{eq:LS1}
x^{m-1}\cdot x^{-m}\left(\sum_{n=0}^\infty z_1^n x^{-n}\right)\cdots\left(\sum_{n=0}^\infty z_m^n x^{-n}\right).
\end{equation}
The coefficient of the $-1$-term in the formal Laurent series resulting from \eqref{eq:LS1} is clearly $1$, the RHS of \eqref{eq:11}.  On the other hand Theorem \ref{thm:Residue} implies that the residue of the rational function $\frac{x^{m-1}}{(x-z_1)\cdots(x-z_m)}$ is $\sum_{i=1}^m\frac{z_i^{m-1}}{\prod_{j\neq i}(z_i-z_j)}$, the RHS of \eqref{eq:11}, as claimed.
\end{proof}

\begin{corollary}
\label{cor:12}
For any distinct elements $z_1,\ldots,z_m$ in $A$, we have
\begin{equation} 
\label{eq:12}
\sum_{i=1}^m\frac{\prod_{j\neq i} z_j}{\prod_{j\neq i}(z_j-z_i)}=1.
\end{equation}
\end{corollary}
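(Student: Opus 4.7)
The plan is to mimic the residue argument used in Corollary \ref{cor:11}, but applied to a rational function with one extra pole at the origin. Specifically, I consider
$$R(x) \coloneqq \frac{z_1 z_2 \cdots z_m}{x(x-z_1)(x-z_2)\cdots(x-z_m)},$$
which has $m+1$ distinct simple poles $0, z_1, \ldots, z_m$ and constant polynomial numerator $f(x) = z_1 \cdots z_m$ of $x$-degree $0$. Since $0 < (m+1)-1$, Lemma \ref{lem:xdeg} forces $\Res_{x=\infty} R(x) = 0$.

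Next, I apply Proposition \ref{thm:Residue} (with the $m+1$ distinct poles in place of $m$ poles) to evaluate the same residue as a sum over the poles:
$$0 = \frac{z_1 \cdots z_m}{(0-z_1)\cdots(0-z_m)} + \sum_{i=1}^m \frac{z_1 \cdots z_m}{z_i \prod_{j\neq i}(z_i - z_j)}.$$
The pole-at-zero term simplifies to $(-1)^m$, and each summand in the series reduces after cancelling $z_i$ to $\dfrac{\prod_{j\neq i} z_j}{\prod_{j\neq i}(z_i - z_j)}$. Hence $\sum_i \frac{\prod_{j\neq i} z_j}{\prod_{j\neq i}(z_i - z_j)} = -(-1)^m = (-1)^{m-1}$.

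The final step converts each denominator $\prod_{j\neq i}(z_i - z_j)$ into $\prod_{j\neq i}(z_j - z_i)$, absorbing a global sign $(-1)^{m-1}$; multiplying the preceding identity by $(-1)^{m-1}$ then produces the desired right-hand side $1$. The argument is a direct analogue of the proof of Corollary \ref{cor:11}, so I do not anticipate any substantive obstacle — only careful tracking of the $(-1)^{m-1}$ sign factor when reversing the order of differences, and verification that the $x$-degree hypothesis of Lemma \ref{lem:xdeg} holds (which it does trivially, since the numerator is a constant while the denominator has $m+1$ linear factors).
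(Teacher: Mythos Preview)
Your proof is correct and follows essentially the same approach as the paper: both consider the rational function $\dfrac{z_1\cdots z_m}{x(x-z_1)\cdots(x-z_m)}$, use Lemma~\ref{lem:xdeg} to conclude its residue at infinity vanishes, expand that residue via Proposition~\ref{thm:Residue} as a sum over the $m+1$ poles, and then track the sign $(-1)^{m-1}$ coming from reversing the differences. The paper packages the final sign bookkeeping slightly differently, arriving at $(-1)^m\bigl(1-\sum_i\cdots\bigr)=0$, but the argument is the same.
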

\begin{proof}
Consider the rational function $\frac{\prod_{i=1}^m z_i}{x\cdot (x-z_1)\cdots(x-z_m)}$.  Note that by Lemma \ref{lem:xdeg}, we have
\begin{equation}
\label{eq:Res2}
\Res_{x=\infty}\left(\frac{\prod_{i=1}^m z_i}{x\cdot (x-z_1)\cdots(x-z_m)}\right)=0.
\end{equation}
On the other hand, by Theorem \ref{thm:Residue} we have that 
\begin{equation}
\label{eq:Res3}
\Res_{x=\infty}\left(\frac{\prod_{i=1}^m z_i}{x\cdot (x-z_1)\cdots(x-z_m)}\right)=\frac{\prod_{j=1}^m z_j}{\prod_{j=1}^m(-z_j)}+\sum_{i=1}^m\frac{\prod_{j=1}^m z_j}{z_i\cdot \prod_{j\neq i}(z_i-z_j)}.
\end{equation}
Combining \eqref{eq:Res2} and \eqref{eq:Res3} yields the identity
\begin{equation}
\label{eq:Res4}
(-1)^m\left(1-\sum_{i=1}^m\frac{\prod_{j\neq i} z_j}{\prod_{j\neq i}(z_j-z_i)}\right)=0,
\end{equation}
from which the result follows.
\end{proof}

For a fixed polarizing covector $\xi\in\left(\R^n\right)^*$ fix a basis for $\R^n$ $x,y_1,\ldots,y_{n-1}$ such that $\langle\xi,x\rangle=1$ and $\langle\xi,y_i\rangle=0$ for $1\leq i\leq n-1$, i.e. $y_1,\ldots,y_{n-1}$ is a basis for $W_\xi$, the annihilator subspace of $\xi\colon\R^n\rightarrow\R$.
We regard the polynomial ring $S$ as polynomials in the variable $x$ with coefficients polynomials in $y_1,\ldots,y_{n-1}$, i.e. $S\cong S_\xi[x]$, where $S_\xi\coloneqq\Sym(W_\xi)$.  For any vectors $\alpha_1,\ldots,\alpha_m$ in $\R^n$, we have $\alpha_i=m_i\left(x-\beta_i\right)$, for $1\leq i\leq m$, where the $\beta_1,\ldots, \beta_m$ are elements of $S_\xi$.  
Then for any $f\coloneqq f(x,{\mathbf{y}})\in S\cong S_\xi[x]$ we define the \emph{residue (with respect to $\xi$)} of the quotient $\frac{f(x,{\mathbf{y}})}{\alpha_1\cdots\alpha_m}$ by
\begin{equation}
\label{eq:Res1}
\Res_{\xi}\left(\frac{f(x,{\mathbf{y}})}{\alpha_1\cdots\alpha_m}\right)\coloneqq\frac{1}{\prod_{i=1}^m m_i}\Res_{x=\infty}\left(\frac{f(x,{\mathbf{y}})}{(x-\beta_1)\cdots(x-\beta_m)}\right).
\end{equation}

In particular, $\Res_{\xi}\left(\frac{f(x,{\mathbf{y}})}{\alpha_1\cdots\alpha_m}\right)$ is always a polynomial in $S_\xi$.  For maps $f\colon V\rightarrow S_\xi[x]$ we will use the notation $f_p(x,{\mathbf{y}})$ for the polynomial value of $f$ at vertex $p$; we may also use the usual notation $f(p)$ if the polynomial variables are understood.

\subsection{The Kirwan Map and Cross Sectional Cohomology}
Fix a $d$-valent 1-skeleton $(\Gamma,\alpha,\theta,\lambda)$ in $\R^n$, fix a generic polarizing covector $\xi\in\left(\R^n\right)^*$, and fix a compatible Morse function $\phi\colon V_\Gamma\rightarrow\R$.  The vertices $V_\Gamma$ can be regarded as ``critical points'' of the Morse function $\phi$, and their images are the critical values.  We call the complement $\R\setminus\phi(V_\Gamma)$ the set of \emph{$\phi$-regular values}.  
 
Given a $\phi$-regular value $c\in\left(\phi_{\min},\phi_{\max}\right)$, define the \emph{$c$-vertex set} as the oriented edges of $\Gamma$ at $c$-level:
$$V_c\coloneqq\left\{pq\in E_\Gamma\left|\right. \phi(p)<c<\phi(q)\right\}.$$
For each $e\in E_\Gamma$ the linear map $\rho_e\colon \R^n\rightarrow W_\xi$ defined by 
$$\rho_e(x)=x-\frac{\langle\xi,x\rangle}{\langle\xi,\alpha(e)\rangle}\alpha(e)$$
extends to a map of symmetric algebras $\rho_e\colon S\rightarrow S_\xi$.
Note that for each $e\in E_\Gamma$ we have $\rho_e\equiv\rho_{\bar{e}}$.

Define the map $\mathcal{K}_c\colon\Maps(V_\Gamma,S)\rightarrow\Maps(V_c,S_\xi)$ by 
$$\mathcal{K}_c(f)(e)\coloneqq\rho_e\left(f(i(e))\right).$$
Restricting $\mathcal{K}_c$ to $H(\Gamma,\alpha)\subseteq\Maps(V_\Gamma,S)$ defines the \emph{Kirwan map}.  Note that for $f\in H(\Gamma,\alpha)$, we have 
$$\rho_e\left(f(i(e))\right)=\rho_e\left(f(t(e))\right)=\rho_{\bar{e}}(f(i(\bar{e}))).$$  

Following \cite{GZ0,GZ2} and as above, we fix a basis $x,y_1,\ldots,y_{n-1}$ for $\R^n$, such that $\langle\xi,x\rangle=1$ and $y_1,\ldots,y_{n-1}$ spans $W_\xi\subset\R^n$.  For each $e\in E_\Gamma$ let $m_e\coloneqq\langle\xi,\alpha(e)\rangle$, and write 
$\alpha(e)=m_e\left(x-\beta_e\right),$
where $\beta_e\coloneqq\beta_e(y_1,\ldots,y_{n-1})$ is a linear form in the variables $y_1,\ldots,y_{n-1}$.  Note then that for any $f=f(x,\y)\in S\cong S_\xi[x]$ we have
$\rho_e(f(x,\y))=f(\beta_e,\y).$
In particular, we have $\rho_e(\alpha(e'))=m_{e'}\left(\beta_e-\beta_{e'}\right)$.  Note that the genericity of $\xi$ guarantees that the differences $\left\{\beta_{e}-\beta_{e'}\left|e\neq e'\in E^p\right.\right\}$ are pairwise distinct.

If $(\Gamma,\alpha,\theta,\lambda)$ is straight, we can find some constants $\left\{c_p\right\}_{p\in V_\Gamma}$ satisfying  
$$c_q=c_p\prod_{e\in E^p}\lambda_{pq}(e)=c_p\left|K_{pq}\right| \ \ \ \forall \ pq\in E_\Gamma$$  
as we showed in the proof of Proposition \ref{prop:StrInt}.  If we choose and fix some constants $\left\{c_p\right\}_{p\in V_\Gamma}$, we can define the cross sectional integral operator $\int_{\Gamma_c}\colon\Maps(V_c,S_\xi)\rightarrow Q_\xi$, by
\begin{equation}
\label{eq:cIntegral}
\int_{\Gamma_c}f\coloneqq\sum_{e\in V_c}\frac{f(e)}{c_{i(e)}m_e\Kirc\left(\sigma_e\right)(e)}=\sum_{e\in V_c}\frac{f(e)}{c_{i(e)}m_e\prod_{\substack{e'\in E^{i(e)}\\ e'\neq e}}\rho_e(\alpha(e'))},
\end{equation}
where $\sigma_e\in H^{d-1}(\Gamma,\alpha)$ is the Thom class for the oriented edge $e\in V_c$, and the constants $\left\{c_{i(e)}\right\}_{e\in V_c}$ are as above. 

The following beautiful result is due to Guillemin and Zara \cite[Theorem 2.5]{GZ0}.  The proof given in \cite{GZ0} applies almost verbatim, but we reproduce it here for the sake of completeness.  
\begin{lemma}
\label{prop:KirwanIntegral}
If $(\Gamma,\alpha,\theta,\lambda)$ is straight, with $\xi$, $\phi$, and $c$ fixed as above, then for each $f\in H(\Gamma,\alpha)$ we have 
\begin{equation}
\label{eq:ResidueIntegral1}
\int_{\Gamma_c}\Kirc(f)=\sum_{\phi(q)<c}\frac{1}{c_q}\Res_\xi\left(\frac{f_q(x,{\mathbf{y}})}{\prod_{e\in E^p}\alpha(e)}\right).
\end{equation}
In particular ${\dsp \int_{\Gamma_c}\Kirc(f)\in S_\xi.}$
\end{lemma}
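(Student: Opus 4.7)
My plan is to compare the two sides directly by expanding the residue on the right using Proposition \ref{thm:Residue}, and then re-indexing the resulting double sum.

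First I would fix $q$ with $\phi(q)<c$ and compute $\Res_\xi(f_q/\prod_{e\in E^q}\alpha(e))$. Writing $\alpha(e)=m_e(x-\beta_e)$ and using that the $\beta_e$ (for $e\in E^q$) are pairwise distinct by genericity of $\xi$, Proposition \ref{thm:Residue} together with the definition \eqref{eq:Res1} gives
\begin{equation*}
\Res_\xi\!\left(\frac{f_q(x,\y)}{\prod_{e\in E^q}\alpha(e)}\right)
=\sum_{e\in E^q}\frac{\rho_e(f(q))}{m_e\prod_{e'\in E^q,\,e'\neq e}\rho_e(\alpha(e'))},
\end{equation*}
after clearing the $m_{e'}$ factors between $\rho_e(\alpha(e'))=m_{e'}(\beta_e-\beta_{e'})$ and the product $\prod m_e$. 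Dividing by $c_q$ and summing over $q$ with $\phi(q)<c$, the right-hand side of \eqref{eq:ResidueIntegral1} becomes a sum over oriented edges $e$ issuing from vertices below $c$.

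Next I would split this sum according to whether $t(e)$ is above or below $c$. The ``above $c$'' edges are exactly the elements of $V_c$, and their total contribution matches \eqref{eq:cIntegral} term-for-term, giving $\int_{\Gamma_c}\Kirc(f)$. It therefore remains to show that the contribution from oriented edges with both endpoints below $c$ vanishes. For each such unoriented edge $\{p,q\}$ the two orientations contribute
\begin{equation*}
\frac{\rho_{pq}(f(p))}{c_p\,m_{pq}\prod_{e'\in E^p,\,e'\neq pq}\rho_{pq}(\alpha(e'))}
+\frac{\rho_{qp}(f(q))}{c_q\,m_{qp}\prod_{e''\in E^q,\,e''\neq qp}\rho_{qp}(\alpha(e''))}.
\end{equation*}
Using $\rho_{pq}=\rho_{qp}$, $m_{qp}=-m_{pq}$, and the defining relation $f(q)-f(p)\in\alpha(pq)\cdot S$ (so that $\rho_{pq}(f(p))=\rho_{pq}(f(q))$), the numerators agree. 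The connection axiom A3 applied to each $e'\in E^p\setminus\{pq\}$ gives $\rho_{pq}(\alpha(e'))=\lambda_{pq}(e')\rho_{qp}(\alpha(\theta_{pq}(e')))$, so the product over $E^p\setminus\{pq\}$ equals $|K_{pq}|$ times the product over $E^q\setminus\{qp\}$ (recall $\lambda_{pq}(pq)=1$). The remaining prefactor is $\tfrac{1}{c_p m_{pq}|K_{pq}|}+\tfrac{1}{c_q m_{qp}}$, which vanishes exactly because straightness gives $c_q=c_p|K_{pq}|$ (the choice of constants from the proof of Proposition \ref{prop:StrInt}) together with $m_{qp}=-m_{pq}$.

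This identification finishes \eqref{eq:ResidueIntegral1}, and the last clause ${\int_{\Gamma_c}\Kirc(f)\in S_\xi}$ is immediate since each $\Res_\xi$ summand on the right is in $S_\xi$ by definition \eqref{eq:Res1}. The main obstacle is the cancellation argument for edges lying entirely below $c$: it is a short computation once the ingredients are lined up, but it is the one place where all three hypotheses -- straightness (to get $c_q=c_p|K_{pq}|$), the axial function axiom A3 (to rewrite products at $p$ in terms of products at $q$), and the equivariant cocycle condition on $f$ (to equate the two projected numerators) -- must be used simultaneously.
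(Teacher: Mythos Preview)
Your argument is correct. You take a slightly different route from the paper: the paper proves \eqref{eq:ResidueIntegral1} by a telescoping argument, choosing intermediate regular values $c_0<c_1<\cdots<c_N=c$ separated by single critical values and showing that each increment $\int_{\Gamma_{c_i}}\Kir_{c_i}(f)-\int_{\Gamma_{c_{i-1}}}\Kir_{c_{i-1}}(f)$ equals the residue term at the unique vertex $p_i$ in between. You instead expand the entire right-hand side at once via Proposition~\ref{thm:Residue}, identify the terms indexed by $V_c$ as $\int_{\Gamma_c}\Kirc(f)$, and kill the remaining terms by a pairwise cancellation over unoriented edges lying entirely below level $c$. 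The core identities are identical in both arguments---$\rho_e=\rho_{\bar e}$, $m_{\bar e}=-m_e$, $\rho_e(f(i(e)))=\rho_{\bar e}(f(i(\bar e)))$ for $f\in H(\Gamma,\alpha)$, and the straightness relation $c_q=c_p\,|K_{pq}|$---but the paper applies them one vertex at a time (so the cancellation is between the in- and out-edges of a single $p_i$), while you apply them one edge at a time (between the two orientations of each sub-level edge). Your organization is arguably more direct and avoids the auxiliary choice of intermediate levels; the paper's version makes the Morse-theoretic ``crossing a critical value'' picture more visible, and that incremental form is exactly what gets reused later in Lemma~\ref{lem:Flip}.
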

\begin{proof}
Choose and fix $\phi$-regular values $c_0< c_1<\ldots c_N=c$ such that for any $0\leq i\leq N$ there is a unique vertex $p_i$ satisfying $c_{i-1}<\phi(p_i)<c_i$.  Then we compute 
\begin{equation}
\label{eq:ResidueIntegral2}
\int_{\Gamma_{c_i}}\Kir_{c_i}(f)-\int_{\Gamma_{c_{i-1}}}\Kir_{c_{i-1}}(f)=\sum_{e\in E^{p_i}_+}\frac{\rho_{e}(f(p_i))}{c_{p_i}m_e\rho_e(\sigma_e(p_i))}-
\sum_{e\in E^{p_i}_-}\frac{\rho_{\bar{e}}(f(i(\bar{e})))}{c_{i(\bar{e})}m_{\bar{e}}\rho_{\bar{e}}(\sigma_{\bar{e}}(i(\bar{e})))}.
\end{equation}
Note that $\rho_{\bar{e}}\equiv\rho_e$, and that $\rho_e(f(i(e)))=\rho_{\bar{e}}(f(i(\bar{e})))$ for any $f\in H(\Gamma,\alpha)$.  Also note that $m_e=-m_{\bar{e}}$, and that $\sigma_{\bar{e}}(i(\bar{e}))=\left|K_{\bar{e}}\right|\cdot\sigma_e(i(e))$.  Then RHS of \eqref{eq:ResidueIntegral2} becomes
\begin{equation}
\label{eq:ResidueIntegral3}
\sum_{e\in E^{p_i}_+}\frac{\rho_{e}(f(p_i))}{c_{p_i}m_e\rho_e(\sigma_e(p_i))}+ 
\sum_{e\in E^{p_i}_-}\frac{\rho_{e}(f(p_i))}{c_{i(\bar{e})}m_{e}\left|K_{\bar{e}}\right|\rho_{e}(\sigma_{e}(p_i))}.
\end{equation}
Finally noting that $\left|K_{\bar{e}}\right|\cdot c_{i(\bar{e})}=c_{i(e)}$ we can write \eqref{eq:ResidueIntegral3} as
\begin{equation}
\label{eq:ResInt4}
\sum_{e\in E^{p_i}}\frac{\rho_e(f(p_i))}{c_{p_i}m_e\rho_e(\sigma_e(p_i))}=\frac{1}{c_{p_i}}\cdot\Res_\xi\left(\frac{f(p_i)}{\prod_{e\in E^{p_i}}\alpha(e)}\right)
\end{equation}
Since the integral ${\dsp \int_{\Gamma_{c_0}}\Kir_{c_0}(f)=0}$ (it is a sum over the empty set) we deduce the formula in \eqref{eq:ResidueIntegral1}.  
\end{proof}

Following Guillemin and Zara \cite[Definition 5.1]{GZ2}, we use the result of Proposition \ref{prop:KirwanIntegral} to define the equivariant cross sectional cohomology for the $c$-cross section of a 1-skeleton.

\begin{definition}
\label{def:Hc}
Define $H(\Gamma_c)$ to be the subset $g\in\Maps(V_c,S_\xi)$ such that
$$\int_{\Gamma_c}g\cdot\Kirc(h)\in S_\xi, \ \ \ \ \ \ \text{for every $h\in H(\Gamma,\alpha)$}.$$
\end{definition}
Note that by $S_\xi$-linearity of $\Gamma_\xi$, the set $H(\Gamma_c)$ is an $S_\xi$ submodule of $\Maps(V_c,S_\xi)$.  Moreover Lemma \ref{prop:KirwanIntegral} implies that $\Kirc\left(H(\Gamma,\alpha)\right)$ is a subset of $H(\Gamma_c)$.  Thus the Kirwan map is really a map \emph{into} $H(\Gamma_c)\subset\Maps(V_c,S_\xi)$, i.e.
$\Kirc\colon H(\Gamma,\alpha)\rightarrow H(\Gamma_c).$

\subsection{Complete Graphs and Initial Cross Sectional Cohomology}
Following \cite[Section 4]{GZ2}:  Let $\Delta=\left\{v_0,\ldots,v_d\right\}$ denote any finite set, and let $\tau\colon \Delta\rightarrow W_\xi$ denote any injective function with $\tau(v_i)\coloneqq\beta_i$.  Then on the set $\Maps(\Delta,S_\xi)$ there is an integral operator:
\begin{equation}
\label{eq:CompleteInt}
\int_\Delta g\coloneqq \sum_{i=0}^d\frac{g(v_i)}{\prod_{j\neq i}\left(\beta_i-\beta_j\right)}
\end{equation}
\begin{definition}
\label{def:HDc}
The subset $H(\Delta,\tau)\subseteq\Maps(\Delta,S_\xi)$ is the set of maps $g\colon\Delta\rightarrow S_\xi$ satisfying 
$$\int_{\Delta}g\cdot P(\tau) \ \in S_\xi \ \ \ \ \text{for each $P\in S_\xi[X]$}.$$
\end{definition}

\begin{lemma}
\label{lem:Lem4.1}
A map $g\colon\Delta\rightarrow S_\xi$ is in $H(\Delta,\tau)$ if and only if there exist $g_0,\ldots,g_{d}\in S_\xi$ such that 
\begin{equation}
\label{eq:CG}
g=\sum_{k=0}^dg_k\tau^k.
\end{equation}
\end{lemma}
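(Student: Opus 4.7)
The plan is to exploit the Vandermonde structure of $\{\tau^k\}_{k=0}^d$ together with the residue machinery of Proposition \ref{thm:Residue} and Lemma \ref{lem:xdeg} to reduce the problem to a triangular linear system that can be solved recursively.

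First, since the elements $\beta_0,\ldots,\beta_d\in S_\xi$ are pairwise distinct, the Vandermonde determinant $\prod_{i<j}(\beta_j-\beta_i)$ is nonzero in the field of fractions $Q(S_\xi)$, so the maps $1,\tau,\tau^2,\ldots,\tau^d$ form a $Q(S_\xi)$-basis of $\Maps(\Delta,S_\xi)\otimes_{S_\xi}Q(S_\xi)$. Hence any $g\in\Maps(\Delta,S_\xi)$ admits a unique expansion $g=\sum_{k=0}^d g_k\tau^k$ with coefficients $g_k\in Q(S_\xi)$, and the task reduces to showing that $g_k\in S_\xi$ for all $k$ if and only if $g\in H(\Delta,\tau)$.

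Next, define $h_m\coloneqq\int_\Delta\tau^m$ for $m\geq 0$. By Proposition \ref{thm:Residue}, $h_m=\Res_{x=\infty}\bigl(x^m/\prod_{i=0}^d(x-\beta_i)\bigr)$, and the Laurent expansion in \eqref{eq:FormalLS1} shows that the coefficients of such a Laurent series are polynomials in the $\beta_i$, so $h_m\in S_\xi$ for every $m\geq 0$. By Lemma \ref{lem:xdeg} we have $h_m=0$ for $0\leq m<d$, and by Corollary \ref{cor:11} we have $h_d=1$. The $\Leftarrow$ direction is now immediate: if each $g_k\in S_\xi$, then for any $P\in S_\xi[X]$, the product $g\cdot P(\tau)$ is a polynomial in $\tau$ with coefficients in $S_\xi$, and $\int_\Delta$ of such a polynomial is an $S_\xi$-linear combination of the $h_m$, hence lies in $S_\xi$.

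For the $\Rightarrow$ direction, suppose $g\in H(\Delta,\tau)$. By $S_\xi$-linearity of $\int_\Delta$,
$$\int_\Delta g\cdot\tau^n=\sum_{k=0}^d g_k h_{k+n}=g_{d-n}+\sum_{k=d-n+1}^d g_k h_{k+n},$$
where the second equality uses $h_{k+n}=0$ for $k+n<d$ together with $h_d=1$. Starting from $n=0$, this gives $g_d=\int_\Delta g\in S_\xi$, and then inductively for $n=1,2,\ldots,d$, the coefficient $g_{d-n}$ is expressed as $\int_\Delta g\cdot\tau^n$ minus an $S_\xi$-linear combination of the already-established $g_{d-n+1},\ldots,g_d\in S_\xi$; hence $g_{d-n}\in S_\xi$ as well. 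The main point is that the triangularity of the system (secured by the vanishing $h_m=0$ for $m<d$ and the normalization $h_d=1$) lets us solve it recursively within $S_\xi$, without needing to invert anything in $Q(S_\xi)$.
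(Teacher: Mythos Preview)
Your argument is correct. The paper itself does not prove this lemma but simply cites \cite[Theorem 4.1]{GZ2}, so there is no in-paper proof to compare against; your write-up supplies a self-contained argument using exactly the residue tools (Proposition \ref{thm:Residue}, Lemma \ref{lem:xdeg}, Corollary \ref{cor:11}) that the paper has already set up, which is presumably close in spirit to the original Guillemin--Zara argument.

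One small remark: in the $\Rightarrow$ direction you invoke ``$S_\xi$-linearity of $\int_\Delta$'' to pull the coefficients $g_k$ outside the integral, but at that stage the $g_k$ are only known to lie in $Q(S_\xi)$. Of course $\int_\Delta$ extends tautologically to a $Q(S_\xi)$-linear map $\Maps(\Delta,Q(S_\xi))\to Q(S_\xi)$, so the computation goes through; you might just say ``$Q(S_\xi)$-linearity'' there to be precise.
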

\begin{proof}
See \cite[Theorem 4.1]{GZ2}.
\end{proof}

\begin{corollary}
\label{cor:Lem4.1}
The $S_\xi$ module $H(\Delta,\tau)$ is free with basis $\left\{\tau^i\left|\right. 0\leq i\leq d\right\}$.
\end{corollary}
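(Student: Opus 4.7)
The approach is to observe that spanning is already free from Lemma \ref{lem:Lem4.1}: every element of $H(\Delta,\tau)$ can be written as $\sum_{k=0}^d g_k \tau^k$ with $g_k \in S_\xi$. So the only thing to establish is $S_\xi$-linear independence of the family $\{1,\tau,\tau^2,\ldots,\tau^d\}$ as elements of $\Maps(\Delta,S_\xi)$.

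To prove independence, I would suppose a relation $\sum_{k=0}^d g_k \tau^k = 0$ holds in $\Maps(\Delta,S_\xi)$ and evaluate at each vertex $v_i \in \Delta$. Since $\tau(v_i) = \beta_i$, this gives the $(d+1)$ scalar identities
$$\sum_{k=0}^d g_k \beta_i^k = 0, \qquad i = 0,1,\ldots,d,$$
which is a square linear system in the unknowns $g_0,\ldots,g_d$ whose coefficient matrix is the Vandermonde matrix $V = [\beta_i^k]_{0 \le i,k \le d}$ with entries in $S_\xi$.

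The plan is then to pass to the field of fractions $Q_\xi$ of $S_\xi$ and invoke the standard Vandermonde formula
$$\det(V) = \prod_{0 \le i < j \le d}(\beta_j - \beta_i).$$
By injectivity of $\tau$, the $\beta_i$ are pairwise distinct linear forms in $S_\xi$, hence each factor $\beta_j - \beta_i$ is a nonzero element of the integral domain $S_\xi$, so $\det(V)$ is a nonzero element of $Q_\xi$. Consequently the system has only the trivial solution over $Q_\xi$, forcing $g_k = 0$ in $Q_\xi$ and therefore in $S_\xi$, since the $g_k$ already lie in $S_\xi$. Combined with the spanning statement from Lemma \ref{lem:Lem4.1}, this proves that $\{\tau^i \mid 0 \le i \le d\}$ is a free $S_\xi$-basis for $H(\Delta,\tau)$.

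There is no real obstacle here; the argument is essentially a Vandermonde computation, and the only mild subtlety is that $S_\xi$ is a ring rather than a field, handled by passing to $Q_\xi$ and using that $S_\xi$ is an integral domain so that nonzero elements remain nonzero after inclusion in $Q_\xi$.
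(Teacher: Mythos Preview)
Your proof is correct. The paper's own proof is a single sentence (``This follows directly from Lemma~\ref{lem:Lem4.1}''), so you have simply supplied the details the paper leaves implicit: spanning is the content of the lemma, and linear independence is the Vandermonde argument you give.
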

\begin{proof}
This follows directly from Lemma \ref{lem:Lem4.1}.
\end{proof}

Note that by definition of $H(\Delta,\tau)$ the element $\tau^{d+1}\in\Maps(\Delta,S)$ must belong to $H(\Delta,\tau)$.  Thus by Lemma \ref{lem:Lem4.1} we deduce that 
\begin{equation}
\label{eq:taud}
\tau^{d+1}=\sum_{k=0}^d(-1)^{d-k}\sigma_{d+1-k}\cdot\tau^k
\end{equation}
for some polynomials $\sigma_{d+1-k}\in S^{d+1-k}$.  It turns out that the coefficient $\sigma_{d-k+1}=\sigma_{d-k+1}(\beta_0,\beta_1,\ldots,\beta_d)$ in \eqref{eq:taud} is the $d-k+1^{st}$ elementary symmetric polynomial.  We digress briefly from graphs to symmetric polynomials.

For a fixed positive integer $m$, and a fixed integer $1\leq k\leq m$ we define the $k^{th}$ \emph{elementary symmetric polynomial} by 
\begin{equation}
\label{eq:ESF}
s_{m,k}\coloneqq \sum_{1\leq i_1<\cdots<i_k\leq m}x_{i_1}\cdots x_{i_k},
\end{equation}
where the sum on the RHS is over all $k$-subsets $\left\{i_1<\cdots<i_k\right\}\subseteq\left\{1,\ldots,m\right\}$.  For each $m$, we set $s_{m,0}\coloneqq 1$.
It will be convenient to reference the following well known identity, which we state as a Lemma.
\begin{lemma}
\label{prop:Xn}
For each $m$, and for each fixed $1\leq i\leq m$ we have
\begin{equation}
\label{eq:Xn}
x_i^{m}=\sum_{j=0}^{m-1}(-1)^{m-j-1}s_{m,m-j}\cdot x_i^j.
\end{equation}
\end{lemma}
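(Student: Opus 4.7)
The plan is to derive the identity by substituting $x_i$ into the universal factorization of the polynomial $\prod_{j=1}^m(X - x_j)$ whose coefficients are, up to sign, the elementary symmetric polynomials $s_{m,k}$.

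First I would establish (or recall) the standard Vieta expansion
\[
\prod_{j=1}^m (X - x_j) \;=\; \sum_{k=0}^{m} (-1)^k s_{m,k}\, X^{m-k}
\]
as an identity in the polynomial ring $\R[x_1,\dots,x_m][X]$. This can be proved by a straightforward induction on $m$: the inductive step amounts to comparing, for each $k$, the coefficient of $X^{m-k}$ on both sides, using the recursive description $s_{m,k} = s_{m-1,k} + x_m\, s_{m-1,k-1}$.

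Next I would fix $i$ and specialize to $X = x_i$. Since $x_i$ appears among the factors on the left-hand side, the left-hand side vanishes, so
\[
0 \;=\; \sum_{k=0}^{m}(-1)^k s_{m,k}\, x_i^{m-k}.
\]
Isolating the $k=0$ term and reindexing by $j = m-k$ (so $k = m-j$ and the remaining sum runs from $j=0$ to $j=m-1$) yields
\[
x_i^m \;=\; \sum_{j=0}^{m-1} (-1)^{m-j-1}\, s_{m,m-j}\, x_i^j,
\]
which is exactly \eqref{eq:Xn}.

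There is no substantive obstacle here; the only step requiring any care is the sign bookkeeping in the reindexing, which reverses the parity of the exponent of $-1$ so that the leading correction term $s_{m,1} x_i^{m-1}$ (corresponding to $j = m-1$) appears with a positive sign, as the stated formula requires.
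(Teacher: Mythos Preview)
Your proposal is correct and matches the paper's approach exactly: the paper simply instructs the reader to plug $t = x_i$ into the identity $\prod_{i=1}^m(t - x_i) = \sum_{j=0}^m (-1)^{m-j} s_{m,m-j}\, t^j$, which is precisely your Vieta expansion followed by specialization. Your additional remarks on proving the Vieta identity by induction and on the sign bookkeeping are more than the paper provides, but the underlying argument is the same.
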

\begin{proof}
Simply plug in $t=x_i$ to the polynomial identity
$$\prod_{i=1}^m(t-x_i)= \sum_{j=0}^m(-1)^{m-j}\cdot s_{m,m-j}\cdot t^j.$$
\end{proof}

Now by Proposition \ref{prop:Xn} we see that the coefficient $\sigma_{d-k+1}=s_{d+1,d+1-k}$ is the $(d-k+1)^{st}$ elementary symmetric polynomial in the $(d+1)$ variables $\tau(v_0),\tau(v_1),\ldots,\tau(v_d)$, as claimed above.  

In particular, the finitely generated free $S$-submodule $H(\Delta,\tau)\subseteq\Maps(\Delta,S)$ is closed under multiplication, hence is also an $S$ subalgebra of $\Maps(\Delta,S)$.  

Now suppose that $c$ is a $\phi$-regular value for which there is a unique $p\in V_\Gamma$ such that $\phi(p)<c$.  The cross sectional cohomology is called \emph{initial} in this case.  See \cite[Example 6.1]{GZ2}.

\begin{proposition}
\label{prop:Initial}
The initial cross sectional cohomology $H(\Gamma_c)$ is isomorphic to $H(\Delta,\tau)$, where $\Delta\coloneqq E^p$ and $\tau\coloneqq\Kirc(x)\colon \Delta\rightarrow S_\xi$.
\end{proposition}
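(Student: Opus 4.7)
The plan is to identify $V_c$ with $\Delta = E^p$, compare the two integral operators up to a scalar, and then show that the image of the Kirwan map is exactly the $S_\xi$-subalgebra $\{P(\tau) \mid P \in S_\xi[X]\}$; the equality $H(\Gamma_c) = H(\Delta, \tau)$ will then follow directly from unwinding the defining conditions.

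Since $p$ is the unique vertex with $\phi(p) < c$, it is necessarily the $\phi$-minimum, every oriented edge at $p$ flows outward, and each such edge crosses $c$-level, so $V_c = E^p = \Delta$. Injectivity of $\tau(e) := \beta_e$ follows from axiom A1 applied at $p$, and by definition $\tau = \Kirc(x)$. With the straightening constants normalized so that $c_p = 1$, and using $\rho_e(\alpha(e')) = m_{e'}(\beta_e - \beta_{e'})$, the formula \eqref{eq:cIntegral} becomes
\begin{equation*}
\int_{\Gamma_c} g \;=\; \sum_{e \in E^p} \frac{g(e)}{m_e \prod_{e' \neq e} m_{e'}(\beta_e - \beta_{e'})} \;=\; \frac{1}{M} \int_{\Delta} g,
\end{equation*}
where $M := \prod_{e \in E^p} m_e \in \R^\times$. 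In particular $\int_{\Gamma_c} g \in S_\xi$ if and only if $\int_{\Delta} g \in S_\xi$, since $M$ is a nonzero real scalar.

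Next I would show $\Kirc(H(\Gamma, \alpha)) = \{P(\tau) \mid P \in S_\xi[X]\}$. For any $h \in H(\Gamma, \alpha)$ the value $\Kirc(h)(e) = \rho_e(h(p))$ depends only on $h(p) \in S = S_\xi[x]$, so writing $h(p) = P(x)$ with $P \in S_\xi[X]$ gives $\Kirc(h)(e) = P(\beta_e) = P(\tau(e))$. Conversely, every polynomial $P(x) \in S$ determines the constant map $h \equiv P(x)$, which lies trivially in $H(\Gamma, \alpha)$ and whose Kirwan image is $P(\tau)$, so every such $P(\tau)$ is attained.

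Combining the two steps, $g \in H(\Gamma_c)$ iff $\int_{\Gamma_c} g \cdot \Kirc(h) \in S_\xi$ for all $h \in H(\Gamma,\alpha)$, iff $\int_\Delta g \cdot P(\tau) \in S_\xi$ for all $P \in S_\xi[X]$, iff $g \in H(\Delta, \tau)$. This yields the desired isomorphism of $S_\xi$-modules inside $\Maps(\Delta, S_\xi)$. There is no real obstacle here beyond careful bookkeeping of the scalar $M$ and the observation that, in the initial case, the Kirwan map factors through evaluation at the single vertex $p$, making its image manifestly the $S_\xi$-subalgebra generated by $\tau$.
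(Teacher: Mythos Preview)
Your proof is correct and follows essentially the same approach as the paper: identify $V_c=E^p$, relate $\int_{\Gamma_c}$ to $\int_\Delta$ by the nonzero scalar $c_p\prod_e m_e$, and observe that the Kirwan image is exactly $\{P(\tau)\mid P\in S_\xi[X]\}$ by evaluating at $p$ and using constant classes for surjectivity. The only cosmetic difference is that you normalize $c_p=1$ and separate the scalar comparison of the integrals from the identification of the Kirwan image, whereas the paper carries $c_p$ along and performs the comparison directly on $f\cdot\Kirc(h)$.
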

\begin{proof}
Note that $V_c=E^p=\Delta$.  Then for any $f\in\Maps(V_c,S_\xi)=\Maps(\Delta,S_\xi)$, and for any $h\in H(\Gamma,\alpha)$ we have 
\begin{align*}
\int_{\Gamma_c} f\cdot\Kirc(h)= & \sum_{e\in E^p}\frac{f(e)\cdot\Kirc(h)(e)}{c_pm_e\prod_{\substack{e'\in E^p\\ e'\neq e\\}}(\beta_{e'}-\beta_{e})}\\
= & \frac{1}{c_p\prod_{e\in E^p}m_e}\sum_{e\in E^p}\frac{f(e)\cdot \rho_e\left(h_p(x,{\mathbf{y}})\right)}{\prod_{\substack{e'\in E^p\\ e'\neq e\\}}(\beta_{e'}-\beta_{e})}\\
= & \frac{1}{c_p\prod_{e\in E^p}m_e}\sum_{e\in E^p}\frac{f(e)\cdot h_p(\beta_e,{\mathbf{y}})}{\prod_{\substack{e'\in E^p\\ e'\neq e\\}}(\beta_{e'}-\beta_{e})}\\
= & \frac{1}{c_p\prod_{e\in E^p}m_e}\sum_{e\in E^p}\frac{f(e)\cdot h_p(\tau(e),{\mathbf{y}})}{\prod_{\substack{e'\in E^p\\ e'\neq e\\}}(\beta_{e'}-\beta_{e})}\\
= & \frac{1}{c_p\prod_{e\in E^p}m_e}\int_{\Delta}f\cdot H_p(\tau),
\end{align*}
where $H_p(x)$ is the polynomial $h_p(x,{\mathbf{y}})$ viewed in the polynomial ring $S_\xi[x]\cong S$.  Furthermore note that the polynomial $H(x)\coloneqq H_p(x)\in S_\xi[x]$ may be chosen freely by choosing the equivariant class $h$ appropriately, e.g. we can always choose $h$ to be the constant class $q\mapsto H(x)\in S\cong S_\xi[x]$.  This implies that 
\begin{align*}
H(\Gamma_c)\coloneqq & \left\{f\in\Maps(V_c,S_\xi)\left| \int_{\Gamma_c}f\cdot\Kirc(h)\in S_\xi, \ \forall h\in H(\Gamma,\alpha)\right. \right\}\\
= & \left\{f\in\Maps(\Delta,S_\xi)\left| \int_\Delta f\cdot H(\tau)\in S_\xi \ \forall H(x)\in S_\xi[x]\right.\right\}\\
\eqqcolon & H(\Delta,\tau)
\end{align*}
as claimed.
\end{proof}

\section{Morse Theory}
\label{sec:Proof}
In this section, we fix a $d$-valent 1-skeleton $(\Gamma,\alpha,\theta,\lambda)$ in $\R^n$ which satisfies the acyclicity axiom.  We shall choose and fix a generic polarizing covector $\xi\in\left(\R^n\right)^*$ and a compatible Morse function $\phi\colon V_\Gamma\rightarrow\R$.  As before, we also fix a basis $x,y_1,\ldots,y_{n-1}$ for $\R^n$ such that $\langle x,\xi\rangle=1$ and $y_1,\ldots,y_{n-1}$ span the annihilator subspace $W_\xi\subset\R^n$.  As before, for every $e\in E_\Gamma$ we denote by $\rho_e\colon S\cong S_\xi[x]\rightarrow S_\xi$ the algebra map defined by $\rho_e\left(f(x,\y)\right)=f(\beta_e,\y)$.  

In subsections \ref{subsec:one}, \ref{subsec:two}, and \ref{subsec:three} we will also be assuming that every $2$-slice has the Morse package as in Theorem \ref{thm:GZMain}.  Note that under these assumptions $(\Gamma,\alpha,\theta,\lambda)$ must also be straight, by Proposition \ref{prop:StMor2}.  We therefore also choose and fix ``integration constants'' $\left\{c_p\right\}_{p\in V_\Gamma}$.

\subsection{Restriction Maps}
\label{subsec:one}
Fix two $\phi$-consecutive vertices and let $c$ be any regular value between them, say $\phi(q)<c<\phi(p)$.  Then the oriented edge sets $E^q_+$ and $\bar{E}^p_-\coloneqq\left\{\bar{e}\left|e\in E^p_-\right.\right\}$ are both subsets of $V_c$.  For each $e\in E_\Gamma$ let $m_e\coloneqq\langle\xi,\alpha(e)\rangle$, and in our fixed basis $x,y_1,\ldots,y_{n-1}$ write $\alpha(e)=m_e\left(x-\beta_e\right)$ where $\beta_e=\beta_e(\y)$ is a linear form in the variables $\y=y_1,\ldots,y_{n-1}$.  Note that $\beta_{\bar{e}}=\beta_e$.  Define the subsets $\Delta_{c}^-\coloneqq E^q_+$ and 
$\Delta_c^+\coloneqq\bar{E}^p_-$.
Define the maps $\tau_{c\pm}\colon\Delta_{c}^{\pm}\rightarrow S_\xi$ by $\tau_{c\pm}(e)\coloneqq \beta_e$.  Define the subrings $H(\Delta_c^{\pm})\coloneqq H(\Delta_c^{\pm},\tau_{c\pm})\subseteq\Maps(\Delta_c^\pm,S_\xi)$, as in Definition \ref{def:HDc}. 

\begin{proposition}
\label{prop:Thm6.1}
The set inclusions $\Delta_c^\pm\subseteq V_c$ define surjective restriction maps
\begin{equation}
\label{eq:restrictionmap}
r_c^\pm\colon H(\Gamma_c)\rightarrow H(\Delta_c^\pm).
\end{equation}
\end{proposition}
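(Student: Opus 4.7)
\emph{Surjectivity of $r_c^+$.} Given $f \in H(\Delta_c^+)$, Lemma \ref{lem:Lem4.1} writes $f = P(\tau_{c+})$ for some $P(x) \in S_\xi[x] \subseteq S$. The constant equivariant class $h_P \in H(\Gamma,\alpha)$ with $h_P(v) \equiv P(x)$ has Kirwan image $g := \Kirc(h_P) \in H(\Gamma_c)$, and for each $e \in \Delta_c^+$ one computes $g(e) = \rho_e(P(x)) = P(\beta_e) = P(\tau_{c+}(e)) = f(e)$, so $r_c^+(g) = f$. The argument for $r_c^-$ is identical, using $\tau_{c-}(e) = \beta_e$ on $\Delta_c^- = E^q_+$.

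\emph{Well-definedness of $r_c^+$: reduction to $P = 1$.} Given $g \in H(\Gamma_c)$, I must show $\int_{\Delta_c^+} g|_{\Delta_c^+} \cdot P(\tau_{c+}) \in S_\xi$ for every $P(x) \in S_\xi[x]$. Pointwise multiplicativity of $\Kirc$, namely $\Kirc(Qh) = \Kirc(Q) \cdot \Kirc(h)$, yields
$\int_{\Gamma_c} (g \cdot \Kirc(Q)) \cdot \Kirc(h) = \int_{\Gamma_c} g \cdot \Kirc(Qh) \in S_\xi$
for every $h \in H(\Gamma,\alpha)$ and $Q \in S$, so $g \cdot \Kirc(Q) \in H(\Gamma_c)$. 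Taking $Q = P(x)$ and noting $(g \cdot \Kirc(P))|_{\Delta_c^+}(e) = g(e) \cdot P(\beta_e) = g|_{\Delta_c^+}(e) \cdot P(\tau_{c+}(e))$, it suffices to prove $\int_{\Delta_c^+} g'|_{\Delta_c^+} \in S_\xi$ for every $g' \in H(\Gamma_c)$.

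\emph{Isolating the $\Delta_c^+$-portion.} I plan to test $g'$ against a ``downward Thom class'' $\tilde h_0 \in H(\Gamma,\alpha)$ with $\tilde h_0(p) = \prod_{e' \in E^p_+} \alpha(e')$ and $\supp(\tilde h_0)$ contained in the $-\xi$-flow-up of $p$, so that $\Kirc(\tilde h_0)$ vanishes on $V_c\setminus\Delta_c^+$ automatically (since the terminal vertex of any such edge lies strictly above $p$) and restricts to $\prod_{e'\in E^p_+} m_{e'}(\beta_e - \beta_{e'})$ on each $e \in \Delta_c^+$. Axiom (A3) applied to the bijection $\theta_e \colon E^{i(e)} \to E^p$ (with $\theta_e(e) = \bar e$) gives, for $e \in \Delta_c^+$,
\[
\prod_{e''\in E^{i(e)},\,e''\neq e} m_{e''}(\beta_e - \beta_{e''}) = |K_e| \cdot \prod_{f\in E^p,\,f\neq\bar e} m_f(\beta_e - \beta_f),
\]
and splitting the right-hand product over $E^p_+$ and $E^p_- \setminus \{\bar e\}$ and using the straightness identity $c_{i(e)}|K_e| = c_p$ from Proposition \ref{prop:StrInt} shows that the $\prod_{e'\in E^p_+} m_{e'}(\beta_e-\beta_{e'})$ factor supplied by $\Kirc(\tilde h_0)(e)$ cancels exactly against the $E^p_+$ portion of the denominator. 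Summing over $\Delta_c^+$, the integrand telescopes to a nonzero scalar multiple of $g'(e) \big/ \prod_{e'\in\Delta_c^+,\,e'\neq e}(\beta_e-\beta_{e'})$, so $\int_{\Gamma_c} g' \cdot \Kirc(\tilde h_0) = C\cdot \int_{\Delta_c^+} g'|_{\Delta_c^+}$ for some $C\in\R^\times$, whence the RHS lies in $S_\xi$ because the LHS does.

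\emph{Main obstacle.} The plan hinges on the existence of $\tilde h_0$: we need the $-\xi$-flow-up of $p$ to form a subskeleton of $\Gamma$ supporting a Thom class whose value at $p$ is the expected product $\prod_{e'\in E^p_+} \alpha(e')$. Once the subskeleton structure is in hand, Proposition \ref{prop:ksliceNormStr}-type normal-straightness follows from straightness of $\Gamma$ (Proposition \ref{prop:StMor2}), so Proposition \ref{prop:ThomNormal} produces the Thom class. The delicate point is the subskeleton property itself, that the connection $\theta_{vw}$ along edges of the $-\xi$-flow-up restricts to that flow-up; this is precisely where the hypothesis that every $2$-slice has the Morse package is used, since within each $2$-slice $\Sigma$ through $v$ containing the edge $vw$ the $-\xi$-flow-up of $p$ can be identified with the $-\xi$-flow-up of $p\cap\Sigma$ inside the pointed $2$-slice $\Sigma$, and these local descriptions glue consistently via $\theta$ thanks to straightness.
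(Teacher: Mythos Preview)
Your surjectivity argument and the reduction to $P=1$ are fine, and the idea of testing against a class that isolates $\Delta_c^+$ is exactly right. The gap is in the existence of your class $\tilde h_0$. What you are asking for is precisely a generating class for $p$ with respect to the reversed polarization $-\xi$: a homogeneous class of degree $|E^p_+|$ supported on $\{v:\phi(v)\le\phi(p)\}$ whose value at $p$ is $\prod_{e'\in E^p_+}\alpha(e')$. The existence of such classes for every vertex is exactly the Morse package, which is the conclusion of Theorem~\ref{thm:GZMain}, not a hypothesis available here. Your attempted justification does not rescue this: the $-\xi$-flow-up of $p$ is \emph{not} a subskeleton in general (it need not have constant valency, and $\theta$ need not preserve it), so the Thom-class mechanism of Proposition~\ref{prop:ThomNormal} does not apply. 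And the local argument you sketch fails at the outset: for an edge $vw$ lying below $p$, the $2$-slice $\Sigma$ through $v$ containing $vw$ has no reason to contain $p$, so ``$p\cap\Sigma$'' and ``the $-\xi$-flow-up of $p$ inside $\Sigma$'' are not meaningful.

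The paper's proof circumvents this circularity by \emph{localizing} rather than trying to build a single global class. It fixes a direction $\gamma=\beta_i-\beta_j$ and shows the integral lies in $(S_\xi)_\gamma$; for this it only needs a class that isolates the $\gamma$-equivalence class $(\Delta_c^+)_\gamma^k\subseteq\Delta_c^+$. That equivalence class corresponds to the edges of a single $2$-slice $\Gamma_H^0$ through $p$, and the required class is $\kappa_{H,p}=T_H\cdot\tau_{H,p}$, where $T_H$ is the Thom class of the $2$-slice (Propositions~\ref{prop:ksliceNormStr} and~\ref{prop:ThomNormal}) and $\tau_{H,p}$ is the $-\xi$-generating class for $p$ \emph{inside} $\Gamma_H^0$, which exists by the hypothesis that each $2$-slice has the Morse package. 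In other words, the hypothesis is invoked one $2$-slice at a time, never globally; your approach tries to use it globally and thereby assumes the theorem.
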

\begin{proof}
Proposition \ref{prop:Thm6.1} for GKM 1-skeleta is due to Guillemin and Zara \cite[Theorem 6.1]{GZ2}, and their ingenious proof applies almost verbatim.  In fact, the following is essentially just a rewording of their proof, tailored to fit our notation.

Note that if $r_c^\pm\left(H(\Gamma_c)\right)\subseteq H(\Delta_c^{\pm})$ then the restriction map in \eqref{eq:restrictionmap} must be surjective.  Indeed by Proposition \ref{lem:Lem4.1}, $H(\Delta_c^\pm)$ is generated as an $S_\xi$ algebra by the single element $\tau_{c\pm}\in H(\Delta_{c}^\pm)$.  Moreover, $r_c^\pm$ is an $S_\xi$-algebra map, and $r_c^\pm\left(\Kirc(x)\right)=\tau_{c\pm}$, where $x\in H^1(\Gamma,\alpha)$ is the constant function $V_\Gamma\ni v\mapsto x\in S^1$.  Thus it suffices to show that $r_c^\pm (f)\in H(\Delta_c^\pm)$ for every $f\in H(\Gamma_c)$.  We show this only for $r\coloneqq r_c^+$.  The proof for $r_c^-$ is completely analogous. 

We need to show that for each $f\in H(\Gamma_c)$, the rational function 
\begin{equation}
\label{eq:restrict1}
\int_{\Dc} r(f)\cdot P(\taupp)\coloneqq\sum_{e\in\Dc}\frac{f(e)\cdot P(\beta_e)}{\prod_{\substack{e'\in\Dc\\ e'\neq e\\}}\left(\beta_e-\beta_{e'}\right)}
\end{equation}
is actually a polynomial in $S_\xi$ for each $P(T)\in S_\xi[T]$.  

For each $\gamma\in \R^n$, let $\left(S_\xi\right)_\gamma$ denote the localized ring at the prime ideal generated by $\gamma$.  Let $\mathcal{M}\coloneqq\left\{\beta_i-\beta_j\left| 1\leq i<j\leq r\right.\right\}$.  In order to prove that the sum in \eqref{eq:restrict1} is in $S_\xi$, it suffices to prove that it lies in the local ring $\left(S_\xi\right)_\gamma$ for every direction $\gamma\in \mathcal{M}$.

Fix $\gamma\in \mathcal{M}$, and define an equivalence relation on $\Dc$ by declaring $e\equiv e'$ if and only if $\beta_e-\beta_{e'}$ is collinear with $\gamma$.  Let $\left(\Dc\right)_{\gamma}^1,\ldots,\left(\Dc\right)_{\gamma}^\ell$ be the distinct equivalence classes, so that in particular $\bar{E}_-^p=\bigsqcup_{k=1}^\ell\left(\Dc\right)_\gamma^k$.  Then we can decompose the sum on the RHS of \eqref{eq:restrict1} by
\begin{equation}
\label{eq:restrict4}
\sum_{e\in\Dc}\frac{f(e)\cdot P(\beta_e)}{\prod_{\substack{e'\in\Dc\\ e'\neq e\\}}\left(\beta_e-\beta_{e'}\right)}=\sum_{k=1}^\ell \left(\sum_{e\in\left(\Dc\right)_\gamma^k}\frac{f(e)\cdot P(\beta_e)}{\prod_{\substack{e'\in \Dc\\ e'\neq e\\}}\left(\beta_e-\beta_{e'}\right)}\right).
\end{equation}
We will show that each sum
\begin{equation}
\label{eq:restrict5}
I_{\gamma}^k\coloneqq\sum_{e\in\left(\Dc\right)_\gamma^k}\frac{f(e)\cdot P(\beta_e)}{\prod_{\substack{e'\in \Dc\\ e'\neq e\\}}\left(\beta_e-\beta_{e'}\right)}
\end{equation}
is in the local ring $\left(S_\xi\right)_\gamma$.

Fix $1\leq k\leq \ell$.  First, suppose that $\left(\Dc\right)_\gamma^k$ consists only of a single edge, say $e$.  Then for every other $e'\in \Dc$ different from $e$, the difference $\beta_e-\beta_{e'}$ must not be a multiple of $\gamma$.  Hence in this case the sum in \eqref{eq:restrict5} is in $\left(S_\xi\right)_{\gamma}$.  

Now suppose that $\left(\Dc\right)_\gamma^k$ contains at least two edges, say $e_1$ and $e_2$.  Let $H$ be the $2$-dimensional subspace spanned by $\alpha(e_{1})$ and $\alpha(e_{2})$, and let $(\Gamma_H^0,\alpha_H^0,\theta^0_H,\lambda^0_H)$ be the corresponding $2$-slice containing the vertex $p$.  Let $E^p_H\subseteq E^p$ denote those oriented edges at $p$ belonging to $\Gamma_H^0$, and let $N_H^p$ denote those oriented edges at $p$ that are normal to $\Gamma_H^0$.  Note that $e\in \left(\Dc\right)_\gamma^k$ if and only if $\bar{e}\in E^p_-\cap E^p_H$.
Indeed for any edge $e\in E^p_-$ we have  
$$\rho_{e_1}(\alpha(e))=\alpha(e)-\frac{\langle\xi,\alpha(e)\rangle}{\langle\xi,\alpha(e_{1})\rangle}\alpha(e_{1})=m_{e}\left(\beta_{e_1}-\beta_{e}\right) \in H\cap W_\xi,$$
and since $H\cap W_\xi=\R\cdot\gamma$, we see that $\alpha(e)\in H$ if and only if $\beta_{e_1}-\beta_e\in\R\cdot\gamma$.

By Propositions \ref{prop:ksliceNormStr} and \ref{prop:ThomNormal}, the $2$-slice $(\Gamma_H^0,\alpha_H^0,\theta^0_H,\lambda_H^0)$ supports a Thom class.  Scaling if necessary, we may choose that Thom class $T_H\in H(\Gamma,\alpha)$ such that 
$$T_H(p)=\prod_{e\in N^p_H}\alpha(e).$$
By our assumptions, $(\Gamma_H^0,\alpha_H^0,\theta^0_H,\lambda_H^0)$ has the Morse package.  Note that $-\xi|_H\in\left(H\right)^*$ provides a generic polarizing covector, and $-\phi\left|_{V_H^0}\right.$ provides a compatible Morse function.  Hence we can find a homogeneous class $\tau_{H,p}\in H(\Gamma_H^0,\alpha_H^0)$ for which $\supp\left(\tau_{H,p}\right)\subseteq\left\{x\in V_H^0\left| \phi(x)<\phi(p)\right.\right\}$ and for which 
$$\tau_{H,p}(p)=\prod_{e\in E^p_+\cap E^p_H}\alpha(e).$$

Note that by Proposition \ref{prop:whatever}, the product $\kappa_{H,p}\coloneqq T_H\cdot\tau_{H,p}\colon V_\Gamma\rightarrow S$ is also an equivariant class in $H(\Gamma,\alpha)$.  Hence for any polynomial $P\coloneqq P(T)\in S_\xi[T]\cong S$, the sum of rational functions
\begin{equation}
\label{eq:restrict6}
\sum_{e\in V_c}\frac{f(e)\cdot P(\beta_e)\cdot \rho_e\left(\kappa_{H,q}(i(e))\right)}{c_{i(e)}m_{e}\rho_e\left(\prod_{\substack{e'\in E^{i(e)}\\ e'\neq e\\}}\alpha(e')\right)}=\int_{\Gamma_c}f\cdot\Kirc\left(P\cdot \kappa_{H,p}\right),
\end{equation}
must be a polynomial in $S$, since $f\in H(\Gamma_c)$.
Note that for $e\in V_c\setminus\Dc$ we must have $\phi(t(e))>\phi(p)$ hence $\rho_e\left(\kappa_{H,p}(i(e))\right)=\rho_e\left(\kappa_{H,p}(t(e))\right)=\rho_e(0)=0$.  On the other hand, for $e\in \Dc$, we have 
$$\rho_e\left(\kappa_{H,p}(i(e))\right)=\rho_e\left(\kappa_{H,p}(p)\right)=
\begin{cases} 
\rho_e\left(\prod_{e'\in E^p_+\cap E^p_H}\alpha(e')\cdot\prod_{e''\in N^p_H}\alpha(e'')\right) & \text{if $e\in\left(\Dc\right)_\gamma^k$}\\ 
0 & \text{otherwise}\\ 
\end{cases}$$
We also have for each $e\in \Dc$,  
\begin{equation}
\label{eq:id1}
c_{i(e)}\rho_e\left(\prod_{\substack{e'\in E^{i(e)}\\ e'\neq e\\}}\alpha(e')\right)=c_p\rho_e\left(\prod_{\substack{e'\in E^{p}\\ e'\neq \bar{e}\\}}\alpha(e')\right).
\end{equation}
Therefore the integral in \eqref{eq:restrict6} simplifies to 
\begin{align}
\label{eq:restrict7}
\nonumber\int_{\Gamma_c}f\cdot\Kirc\left(P\cdot \kappa_{H,p}\right) & = \sum_{e\in V_c}\frac{f(e)\cdot P(\beta_e)\cdot \rho_e\left(\kappa_{H,p}(i(e))\right)}{c_{i(e)}m_{e}\rho_e\left(\prod_{\substack{e'\in E^{i(e)}\\ e'\neq e\\}}\alpha(e')\right)}\\
\nonumber & = \sum_{e\in \left(\Dc\right)_\gamma^k}\frac{f(e)\cdot P(\beta_e)}{c_pm_{e}\rho_e\left(\prod_{\substack{e'\in E^p_-\cap E^p_H\\ e'\neq e\\}}\alpha(e')\right)}\\
 & = -\frac{1}{c_qM_{H,q}}\cdot\sum_{e\in\left(\Dc\right)_\gamma^k}\frac{f(e)\cdot P(\beta_{e})}{\prod_{\substack{e'\in\left(\Dc\right)_\gamma^k\\ e'\neq e\\}}\left(\beta_{e}-\beta_{e'}\right)}
\end{align}
where $M_{H,p}\coloneqq \prod_{e\in E^p_-\cap E^p_H} m_{e}$.  Define the polynomial $R(T)=\prod_{e'\in\Dc\setminus\left(\Dc\right)_\gamma^k}(T-\beta_{e'})\in S_\xi[T]$.  We can rewrite the sum in \eqref{eq:restrict5} as
\begin{align}
\label{eq:restrict8}
\nonumber I_{\gamma}^k & \coloneqq\sum_{e\in\left(\Dc\right)_\gamma^k}\frac{f(e)\cdot P(\beta_e)}{\prod_{\substack{e'\in \Dc\\ e'\neq e\\}}\left(\beta_e-\beta_{e'}\right)}\\
\nonumber & = \sum_{e\in\left(\Dc\right)_\gamma^k}\frac{f(e)\cdot P(\beta_e)}{\prod_{\substack{e'\in \left(\Dc\right)_\gamma^k\\ e'\neq e\\}}\left(\beta_e-\beta_{e'}\right)}\cdot\frac{1}{\prod_{e'\in \Dc\setminus\left(\Dc\right)_\gamma^k}\left(\beta_e-\beta_{e'}\right)}\\
\nonumber & = \sum_{e\in\left(\Dc\right)_\gamma^k}\frac{f(e)\cdot P(\beta_e)}{\prod_{\substack{e'\in \left(\Dc\right)_\gamma^k\\ e'\neq e\\}}\left(\beta_e-\beta_{e'}\right)}\cdot\frac{1}{R(\beta_e)}\\
\nonumber & = \frac{1}{\prod_{e\in\left(\Dc\right)_\gamma^k}R\left(\beta_e\right)}\cdot \sum_{e\in\left(\Dc\right)_\gamma^k}\frac{f(e)\cdot P(\beta_e)\cdot\prod_{\substack{e'\in\left(\Dc\right)_\gamma^k\\e'\neq e\\}}R\left(\beta_{e'}\right)}{\prod_{\substack{e'\in \left(\Dc\right)_\gamma^k\\ e'\neq e\\}}\left(\beta_e-\beta_{e'}\right)}\\
\nonumber & = \frac{1}{\prod_{e\in\left(\Dc\right)_\gamma^k}R\left(\beta_e\right)}\cdot \sum_{e\in\left(\Dc\right)_\gamma^k}\frac{f(e)\cdot P(\beta_e)\cdot Q\left(\beta_e\right)}{\prod_{\substack{e'\in \left(\Dc\right)_\gamma^k\\ e'\neq e\\}}\left(\beta_e-\beta_{e'}\right)}\\
& = -\frac{c_qM_{H,q}}{\prod_{e\in\left(\Dc\right)_\gamma^k}R\left(\beta_e\right)}\cdot\int_{\Gamma_c}f\cdot\Kirc\left(P\cdot Q\right),
\end{align}
where $Q\coloneqq Q(T)\in S_\xi[T]$ satisfies 
\begin{equation}
\label{eq:restrict9}
Q(\beta_e)=\prod_{\substack{e'\in\left(\Dc\right)_\gamma^k\\e'\neq e\\}}R\left(\beta_{e'}\right), \ \ \ \forall \ e\in\left(\Dc\right)_\gamma^k.
\end{equation}
Before we explain why $Q(T)$ exists, let us note that if it does then the sum $I_{\gamma}^k$ must be in $\left(S_\xi\right)_\gamma$.  Indeed if there is such a $Q\in S$ then the integral $\int_{\Gamma_c}f\cdot\Kirc\left(P\cdot Q\right)$ must be a polynomial in $S_\xi$, and, since $R(\beta_e)\notin\gamma\cdot S$ for each $e\in\left(\Dc\right)_\gamma^k$, the fraction $-\frac{c_pM_{H,p}}{\prod_{e\in\left(\Dc\right)_\gamma^k}R\left(\beta_e\right)}$ must lie in the local ring $\left(S_\xi\right)_\gamma$.  Hence we can conclude that the product in \eqref{eq:restrict8} must also be in $\left(S_\xi\right)_\gamma$.

To finish the proof, we need to show that there is indeed a polynomial $Q(T)\in S_\xi[T]\cong S$ satisfying \eqref{eq:restrict9}.
To see this, let us label the edges in $\left(\Dc\right)_\gamma^k=\left\{e_1,\ldots,e_m\right\}$.  Define the polynomial 
$$\tilde{R}_i(X_1,\ldots,\hat{X_i},\ldots,X_{m})\coloneqq\prod_{\substack{j=1\\ j\neq i\\}}^{m} R(X_j),$$
where the ``hat'' symbol means omission. 
The symmetric group $\mathfrak{S}_{m-1}$ acts on the polynomial ring $S_\xi[X_1,\ldots,\hat{X_i},\ldots,X_{m}]\cong\R[y_1,\ldots,y_{n-1},X_1,\ldots,\hat{X_i},\ldots,X_{m}]$ trivially on the first $(n-1)$-variables and in the usual way on the last $(m-1)$-variables.  The invariant subalgebra $\left(S_{\xi}[X_1,\ldots,\hat{X_i},\ldots,X_{m}]\right)^{\mathfrak{S}_{m-1}}$ is evidently isomorphic to the polynomial subalgebra $S_\xi[P_{1,1},\ldots,\hat{P}_{i,i},\ldots,P_{i,m}]$, where $P_{i,j}$ is the $j^{th}$ power sum symmetric polynomial, i.e.
$$P_{i,j}=X_1^j+\cdots+\hat{X}_i^j+\cdots+X_{m}^j.$$
Since $\tilde{R}_i\in\left(S_{\xi}[X_1,\ldots,\hat{X_i},\ldots,X_{m}]\right)^{\mathfrak{S}_{m-1}}$, it must therefore be a polynomial in the $P_{i,j}$'s with coefficients in $S_\xi$.  Note that for each $j$ there are polynomials $\tilde{P}_j(T)\in S_\xi[X_1,\ldots,X_m][T]$ such that for each $1\leq i\leq m$, $\tilde{P}_j(X_i)=P_{i,j}\in S_\xi[X_1,\ldots,\hat{X_i},\ldots,X_{m}]$, e.g.
$$\tilde{P}_j(T)\coloneqq X_1^j+\cdots+X_{m}^j-T^j.$$  
Hence there must be a polynomial $\tilde{Q}(T)\in S_\xi[X_1,\ldots,X_m][T]$ such that $\tilde{Q}(X_i)=\tilde{R}_i$ for each $1\leq i\leq m$.  Now we can take $Q(T)\in S_\xi[T]$ to be the image of $\tilde{Q}(T)$ under the evaluation map $X_i\mapsto\beta_i$.
\end{proof}

\subsection{Flip-Flop Maps}
\label{subsec:two}
Now we fix two $\phi$-regular values with the unique vertex $p\in V_\Gamma$ in between them, say $c<\phi(p)<c'$, and let $\Dc\coloneqq \bar{E}^p_-\subseteq V_c$ and $\Dcp\coloneqq E^p_+\subseteq V_{c'}$ as above.  Suppose that $\ind_\xi(p)=r$ so that $\left|\Dc\right|=r$ and $\left|\Dcp\right|=d-r\eqqcolon s$.  Recall that by Corollary \ref{cor:Lem4.1}, the ring $H(\Delta_p^{+})$, resp. $H(\Delta_p^-)$, is a free $S$ module generated by $\left\{\tau_{p+}^{i}\left|0\leq i\leq r-1\right.\right\}$, resp. $\left\{\tau_{p-}^a\left|0\leq a\leq s-1\right.\right\}$.  Define the \emph{down transition maps}, resp. \emph{up transition maps}, by
$$\xymap{H(\Dc)\ar[r]^-{\delta_0^p} & H(\Dcp)\\ \sum_{i=0}^{r-1}b_i\taupp^i\ar@{|->}[r] & \sum_{i=0}^{r-1}b_i\taupm^i\\} \ \text{resp.} \ \xymap{H(\Dcp)\ar[r]^-{\mu^p_0} & H(\Dc)\\ \sum_{a=0}^{s-1}c_a\taupm^a\ar@{|->}[r] & \sum_{a=0}^{s-1}c_a\taupp^a.\\}$$
Using Proposition \ref{prop:Thm6.1} and the identification of the sets $V_c\setminus\Dc$ and $V_{c'}\setminus \Dcp$, we can extend these transition maps to maps on $H(\Gamma_{c})$ and $H(\Gamma_{c'})$, i.e.
\begin{align}
\label{eq:downmap}
\xymap{H(\Gamma_{c'})\ar[r]^-{\delta^p} & \Maps(V_{c},S_\xi)\\ f\ar@{|->}[r] & {x\mapsto \begin{cases} f(x) & \text{if $x\in V_{c}\setminus\Dc$}\\ \delta_0^p\left(r_{c'}^-(f)\right)(x) & \text{if $x\in\Dc$}\\ \end{cases}.}}\\
\label{eq:upmap}
\xymap{H(\Gamma_{c})\ar[r]^-{\mu^p} & \Maps(V_{c'},S_\xi)\\ g\ar@{|->}[r] & {x\mapsto \begin{cases} g(x) & \text{if $x\in V_{c'}\setminus\Dcp$}\\ \mu_0^p\left(r_c^+(g)\right)(x) & \text{if $x\in\Dcp$}\\ \end{cases}.}}
\end{align}
It turns out that, as one might hope, the image of these maps each lie inside the corresponding cross sectional equivaraint cohomology ring.
\begin{lemma}
\label{lem:Flip}
For each $h\in H(\Gamma,\alpha)$, and for any fixed $f\in H(\Gamma_c)$ and $f'\in H(\Gamma_{c'})$ there are polynomials $F_p=F_p(x,\y), G_p=G_p(x,\y)\in S_\xi[x]\cong S$ such that 
\begin{align}
\label{eq:IntGlueForward}
\int_{\Gamma_{c'}}\mu^p(f)\Kircp(h)-\int_{\Gamma_c}f\Kirc(h)= & \frac{1}{c_p}\Res_\xi\left(\frac{F_p\cdot h_p}{\prod_{e\in E^p}\alpha(e)}\right)\\
\nonumber & \\
\label{eq:IntGlueBackward}
\int_{\Gamma_{c'}}f'\Kircp(h)-\int_{\Gamma_c}\delta^p(f')\Kirc(h)= & \frac{1}{c_p}\Res_\xi\left(\frac{G_p\cdot h_p}{\prod_{e\in E^p}\alpha(e)}\right).
\end{align}
In particular, $\mu^p(f)\in H(\Gamma_{c'})$ and $\delta^p(f')\in H(\Gamma_c)$.
\end{lemma}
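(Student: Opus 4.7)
The plan is to compute the difference of the two integrals edge-by-edge and observe that it reduces to a sum over $E^p$ that matches the claimed residue. First I would split $V_c = V_0 \sqcup \Dc$ and $V_{c'} = V_0 \sqcup \Dcp$, where $V_0 := V_c \setminus \Dc = V_{c'} \setminus \Dcp$ (these agree because $p$ is the unique critical vertex between $c$ and $c'$). On $V_0$ the map $\mu^p(f)$ equals $f$ by definition, and each summand of \eqref{eq:cIntegral} depends only on the oriented edge and not on the level, so the $V_0$-parts of the two integrals cancel in the difference, leaving only the $\Dcp$ and $\Dc$ contributions.

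The key step is identifying a single polynomial $F_p \in S$ that encodes both boundary values. By Proposition \ref{prop:Thm6.1} and Lemma \ref{lem:Lem4.1}, I can write $r_c^+(f) = \sum_{i=0}^{r-1} b_i\, \taupp^i$ for unique $b_i \in S_\xi$, and would set
\[
F_p(x, \y) := \sum_{i=0}^{r-1} b_i(\y)\, x^i \in S_\xi[x] \cong S.
\]
Evaluating at $\beta_e$ yields $r_c^+(f)(\bar e) = F_p(\beta_e, \y)$ for $\bar e \in \Dc$, and by the definition of $\mu^p_0$, $\mu^p(f)(e) = F_p(\beta_e, \y)$ for $e \in \Dcp$. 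Moreover, since $h \in H(\Gamma, \alpha)$, the Kirwan factor $\Kirc(h)(\bar e) = \rho_e(h(t(e)))$ also equals $h_p(\beta_e, \y)$. So the same two polynomials $F_p$ and $h_p$ evaluated at $\beta_e$ describe both boundary sums.

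It remains to reconcile the two sets of denominators so that the half-sums can merge. Using axiom A3 to get $\rho_e(\alpha(e'')) = \lambda_e(e'')\,\rho_e(\alpha(\theta_e(e'')))$ and the bijection $\theta_e\colon E^p \setminus \{e\} \to E^{t(e)} \setminus \{\bar e\}$, combined with $\rho_{\bar e} = \rho_e$, $m_{\bar e} = -m_e$, and the straightness identity $c_{t(e)} = c_p|K_e|$, each $\bar e \in \Dc$-contribution (together with its sign from $-\int_{\Gamma_c}$) merges with the $e \in \Dcp$-contribution into the single sum
\[
\sum_{e \in E^p} \frac{F_p(\beta_e, \y)\, h_p(\beta_e, \y)}{c_p\, m_e \prod_{e' \in E^p \setminus \{e\}} \rho_e(\alpha(e'))} = \frac{1}{c_p}\,\Res_\xi\!\left(\frac{F_p\cdot h_p}{\prod_{e \in E^p} \alpha(e)}\right),
\]
where the equality uses $\rho_e(\alpha(e')) = m_{e'}(\beta_e - \beta_{e'})$ and Proposition \ref{thm:Residue}. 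This proves \eqref{eq:IntGlueForward}; then \eqref{eq:IntGlueBackward} follows symmetrically, with $G_p(x, \y) := \sum_a c_a(\y)\, x^a$ coming from $r_{c'}^-(f') = \sum_a c_a\, \taucp^a$. The ``in particular'' conclusion is then immediate, since each right-hand side lies in $S_\xi$ and the subtracted integral does too by the hypothesis on $f$ or $f'$. The main obstacle is the bookkeeping in this last step: the formula for $F_p$ makes the conceptual picture clean, but one must carefully track how A3 and straightness conspire to convert $E^{t(e)}$-data into $E^p$-data with the correct overall sign.
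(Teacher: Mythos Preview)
Your proposal is correct and follows essentially the same approach as the paper's proof: split off the common set $V_0$ where the integrands agree, define $F_p(x,\y)=\sum_i b_i x^i$ from the $H(\Dc)$-expansion of $r_c^+(f)$, and recognize the remaining sum over $\Dc\cup\Dcp$ as the residue of $F_p\cdot h_p/\prod_e\alpha(e)$. The only cosmetic difference is that where the paper simply invokes the identity \eqref{eq:id1} (already recorded during the proof of Proposition~\ref{prop:Thm6.1}) to rewrite the $\Dc$-denominators in terms of $E^p$, you re-derive that identity on the spot from axiom A3 and the straightness relation $c_{t(e)}=c_p|K_e|$; your phrase ``merges with'' should be read as ``together with the $\Dcp$-contributions combines into the single sum over $E^p$,'' not as a pairwise matching of edges.
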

\begin{proof}
We prove that \eqref{eq:IntGlueForward} holds.  The proof of \eqref{eq:IntGlueBackward} is analogous.  Fix $f\in H(\Gamma_c)$.  By Proposition \ref{prop:Thm6.1}, its restriction $r_+^c(f)$ lies in $H(\Dc)$, and thus by Lemma \ref{lem:Lem4.1} we can find unique $c_0,\ldots,c_{r-1}\in S_\xi$ such that $r_+^c(f)=\sum_{j=0}^{r-1}c_j\taupp^j$.  Hence the restriction of $\mu^p(f)$ to $\Dcp$ is given by 
$$r_-^c\left(\mu^p(f)\right)\coloneqq \sum_{j=0}^{r-1}c_j\taupm^j.$$  
Since $f\cdot\Kirc(h)$ and $\mu^p(f)\cdot\Kircp(h)$ agree on $V_c\setminus\Dc=V_{c'}\setminus\Dcp$, and since \eqref{eq:id1} holds, the difference on the LHS of \eqref{eq:IntGlueForward} becomes
\begin{equation}
\label{eq:g1}
\sum_{e\in\Dcp}\frac{\left(\sum_{j=0}^{r-1}c_j\beta_{e}^j\right)\cdot\rho_{e}\left(h(i(e))\right)}{c_pm_{e}\prod_{\substack{e'\in E^p\\e'\neq e\\}}m_{e'}(\beta_{e}-\beta_{e'})} +\sum_{e\in\Dc}\frac{\left(\sum_{j=0}^{r-1}c_j\beta_e^j\right)\cdot\rho_e\left(h(i(e))\right)}{c_pm_{e}\prod_{\substack{e'\in E^p\\e'\neq\bar{e}\\}}m_{e'}(\beta_e-\beta_{e'})}.
\end{equation}
Define the polynomial
$$F_p(x,\y)\coloneqq \sum_{j=0}^{r-1}c_jx^j.$$
Then $\rho_{e}(F_p(x,\y))=F_p(\beta_e,\y)=\sum_{j=0}^{r-1}c_j\beta_e^j$ for all $e\in E^p$, and \eqref{eq:g1} becomes
\begin{equation}
\label{eq:g3}
\frac{1}{c_p\prod_{e\in E^p}m_e}\sum_{e\in E^p}\frac{\rho_{e}(F_p(x,\y))\cdot\rho_{e}(h_p(x,\y))}{\prod_{\substack{e'\in E^p\\e'\neq e\\}}(\beta_e-\beta_{e'})}
\end{equation}
which, according to \eqref{eq:Res1}, is exactly the RHS of \eqref{eq:IntGlueForward}.
\end{proof} 

We call the resulting $S$-module maps
$$\xymap{H(\Gamma_c)\ar@/^/[r]^-{\mu_p} & H(\Gamma_{c'})\ar@/^/[l]^-{\delta_p}}$$ 
the \emph{flip-flop maps}.  

\subsection{Surjectivity of the Kirwan Map}
We can use our flip-flop maps and Lemma \ref{lem:Flip} to see that the Kirwan maps are surjective.
\label{subsec:three}
\begin{proposition}
\label{prop:SurjKirwan}
For each $\phi$-regular value $c$, the Kirwan map $\Kirc\colon H(\Gamma,\alpha)\rightarrow H(\Gamma_c)$ is surjective.
\end{proposition}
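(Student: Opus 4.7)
The plan is to prove surjectivity by induction on the number of critical values of $\phi$ strictly below $c$. For the base case, when at most one vertex $p_0 \in V_\Gamma$ lies below $c$, Proposition \ref{prop:Initial} identifies $H(\Gamma_c)$ with the complete-graph cohomology $H(\Delta, \tau)$, where $\Delta = E^{p_0}$ and $\tau = \Kirc(x)$. By Corollary \ref{cor:Lem4.1} this $S_\xi$-module is freely generated by the powers $\tau^a = \Kirc(x^a)$, each manifestly in the image of $\Kirc$.

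For the inductive step, I fix regular values $c < c'$ with a unique vertex $p$ satisfying $c < \phi(p) < c'$, and assume $\Kirc$ is surjective. Given $f' \in H(\Gamma_{c'})$, the flip-flop map $\delta^p$ of Lemma \ref{lem:Flip} produces $\delta^p(f') \in H(\Gamma_c)$, which the inductive hypothesis lifts to some $h_0 \in H(\Gamma, \alpha)$ with $\Kirc(h_0) = \delta^p(f')$. Because $V_{c'} \setminus \Dcp = V_c \setminus \Dc$ and $\Kircp(h_0)$ restricted to this set coincides with $\Kirc(h_0) = \delta^p(f')$, which agrees with $f'$ there, the difference $g := f' - \Kircp(h_0) \in H(\Gamma_{c'})$ is supported on $\Dcp = E^p_+$.

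The crux of the proof is therefore to show that every $g \in H(\Gamma_{c'})$ supported on $\Dcp$ lies in $\Kircp(H(\Gamma,\alpha))$. By Proposition \ref{prop:Thm6.1} and Corollary \ref{cor:Lem4.1}, $r^-_{c'}(g) \in H(\Dcp)$ is a polynomial combination of powers of $\tau_{c'-}$; interpreting the extension of $g$ by zero on $E^p_-$ as an element of $H(E^p, \tau_p)$ (with $\tau_p(e) = \beta_e$) via Lemma \ref{lem:Lem4.1} then yields the factorization $g(e) = R(\beta_e) \prod_{e' \in E^p_-}(\beta_e - \beta_{e'})$ for some $R \in S_\xi[X]$. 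The natural candidate lift is the constant class $h_R := R(x,\y) \prod_{e' \in E^p_-}(x - \beta_{e'}) \in S \subset H(\Gamma, \alpha)$: it satisfies $\Kircp(h_R)|_{\Dcp} = g|_{\Dcp}$, though its Kirwan image on $V_{c'} \setminus \Dcp$ is generally nonzero. The excess on $V_{c'} \setminus \Dcp$ is removed by applying $\delta^p$ and the inductive hypothesis once more, producing a secondary lift $h_R'$ whose Kirwan image matches on $V_{c'} \setminus \Dcp$ but contributes a residual $\rho_e(h_R'(p))$ on $\Dcp$.

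The main obstacle will be absorbing this secondary residual. The strategy is to exploit the freedom in the choice of $h_R'$ modulo $\ker(\Kirc)$: the edge Thom classes $\sigma_{pe}$ for $e \in E^p_+$ lie in $\ker(\Kirc)$ (since their supports $\{p, t(e)\}$ avoid $V_c$, as $\phi(p) > c$ and $\phi(t(e)) > c' > c$), and the Thom classes of $2$-slices through $p$ (available by Proposition \ref{prop:ThomNormal}) combined with the generating classes at $p$ inside those $2$-slices (available by the Morse-package hypothesis on $2$-slices, via Propositions \ref{prop:MorsePackage} and \ref{prop:whatever}) provide classes supported in $F_p$ whose Kirwan images behave like delta functions on $\Dcp$. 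The key input to make an explicit interpolation argument work is the factorization of $g(e)$ above together with the genericity of $\xi$, which ensures the $\{\beta_e : e \in E^p\}$ are pairwise distinct and permits a Lagrange-style resolution of the residual.
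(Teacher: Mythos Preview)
Your inductive strategy starts correctly: the base case via Proposition~\ref{prop:Initial} works, and for the inductive step, applying $\delta^p$ followed by the inductive lift does reduce the problem to lifting a class $g\in H(\Gamma_{c'})$ supported on $\Dcp$. Your factorization $g(e)=R(\beta_e)\prod_{e'\in E^p_-}(\beta_e-\beta_{e'})$ is also correct and can be justified by observing that for $g$ supported on $\Dcp$ the integral $\int_{\Gamma_{c'}}g\cdot\Kircp(h)$ localizes to a sum over $E^p_+$ which, via \eqref{eq:id1}, is a constant multiple of $\int_{E^p}\tilde g\cdot H_p(\tau_p)$ for the extension $\tilde g$ by zero; since $h_p$ is arbitrary this places $\tilde g\in H(E^p,\tau_p)$.

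The gap is in the final lifting step. Your ``secondary lift'' $h_R'$ simply returns another class supported on $\Dcp$, and the proposed interpolation does not close the loop. The edge Thom classes $\sigma_{pe}$ have degree $d-1$ and their $\Kircp$-images are delta functions times $\prod_{e''\neq e}(\beta_e-\beta_{e''})$, which does not divide $g(e)$ in $S_\xi$ in general. The $2$-slice classes $T_H\cdot\tau_{H,p}$ (with $\tau_{H,p}$ the $\xi$-generating class at $p$) have degree $d-|E^p_+\cap E^p_H|>r$ unless the $2$-slice contains all of $E^p_+$, and there is no evident reason their $\Kircp$-images span the module of classes supported on $\Dcp$. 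What you actually need is a weak generating class $\tau_p$ for $p$: then $\frac{1}{M_p}R(x)\cdot\tau_p$ lifts $g$ on the nose. But constructing $\tau_p$ via Lemma~\ref{lem:SurjKMorse} requires surjectivity of $\Kircp$, which is precisely what you are proving; the inductive hypothesis at level $c$ only yields a \emph{down}-generating class $\tau_p^-$, whose $\Kircp$-image vanishes identically on $\Dcp$. So the bottom-up induction is genuinely stuck.

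The paper avoids this circularity with a non-inductive, global argument. Given $f_i\in H(\Gamma_{c_i})$, it extends via the flip-flop maps in \emph{both} directions to a full sequence $(f_k)_{k=0}^N$ with $f_0=f_N=0$. By Lemma~\ref{lem:Flip} each consecutive integral difference equals $\frac{1}{c_{p_k}}\Res_\xi\bigl(F_{p_k}h_{p_k}/\prod_{e}\alpha(e)\bigr)$ for explicit polynomials $F_{p_k}\in S$. Telescoping gives $\Res_\xi\bigl(\sum_p F_p h_p/(c_p\prod_e\alpha(e))\bigr)=0$ for every $h\in H(\Gamma,\alpha)$; replacing $h$ by $x^k h$ and invoking Proposition~\ref{prop:R2} shows $\int_\Gamma F\cdot h\in S$, whence $F\in H(\Gamma,\alpha)$ by the duality Proposition~\ref{prop:StrDuality}. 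Comparing the partial sum with Lemma~\ref{prop:KirwanIntegral} then gives $\Kir_{c_i}(F)=f_i$. The key idea you are missing is this use of duality to certify membership in $H(\Gamma,\alpha)$ globally, rather than attempting to build the lift level by level.
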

\begin{proof}
Fix regular values $c_0<\phi(p_1)<\cdots<c_{N-1}<\phi(p_N)<c_N$.  Fix $1\leq i\leq N-1$, and fix $f_i\in H(\Gamma_{c_i})$.  We can complete $f_i$ to a sequence of $\left\{f_k\right\}_{k=0}^{N}$ where $f_0\coloneqq 0\eqqcolon f_N$, $f_k\coloneqq\delta^{p_k}\circ\cdots\circ\delta^{p_i}(f_i)$ for $1\leq k<i$, and $f_k=\mu^{p_{k-1}}\circ\cdots\circ\mu^{p_i}(f_i)$ for $i< k\leq N-1$.  Then, by Lemma \ref{lem:Flip}, we have that $f_k\in H(\Gamma_{c_k})$ for all $1\leq k\leq N$ and we have 
\begin{equation}
\label{eq:SurjKirwan}
\int_{\Gamma_{c_k}}f_k\Kir_{c_k}(h)-\int_{\Gamma_{c_{k-1}}}f_{k-1}\Kir_{c_{k-1}}(h)=\frac{1}{c_{p_{k}}}\Res_\xi\left(\frac{F_{p_k}\cdot h_{p_k}}{\prod_{e\in E^{p_k}}\alpha(e)}\right),
\end{equation}
for some $F_{p_k}\in S_\xi[x]$.  We claim that the map $p\mapsto F_p$ is an equivariant class in $H(\Gamma,\alpha)$.  To see this, we add all the equations in \eqref{eq:SurjKirwan} to obtain
\begin{equation}
\label{eq:SurjKirwan2}
0=\sum_{k=1}^N\frac{1}{c_{p_k}}\Res_\xi\left(\frac{F_{p_k}\cdot h_{p_k}}{\prod_{e\in E^{p_k}}\alpha(e)}\right)=\Res_\xi\left(\sum_{p\in V_\Gamma}\frac{F_p\cdot h_p}{c_p\prod_{e\in E^p}\alpha(e)}\right),
\end{equation}
which holds for \emph{all} equivariant classes $h\in H(\Gamma,\alpha)$.  In particular, for any fixed $h\in H(\Gamma,\alpha)$ and for any fixed positive integer $k$, the element $h\cdot x^k$ is also in $H(\Gamma,\alpha)$.  In particular, we must have 
$$\Res_\xi\left(\sum_{p\in V_\Gamma}\frac{F_p\cdot h_p\cdot x^k}{c_p\prod_{e\in E^p}\alpha(e)}\right)=0$$
which implies, by Proposition \ref{prop:R2}, that $\sum_{p\in V_\Gamma}\frac{F_p\cdot h_p}{c_p\prod_{e\in E^p}\alpha(e)}\in S_\xi[x]\cong S$.  Since this holds for every $h\in H(\Gamma,\alpha)$, Proposition \ref{prop:StrDuality} implies that the map $F\colon p\mapsto F_p$ must also lie in $H(\Gamma,\alpha)$.  Summing the first $i$ equations in \eqref{eq:SurjKirwan} then yields
\begin{align}
\label{eq:SurjKirwan3}
\int_{\Gamma_{c_i}}f_i\Kir_{c_i}(h)= & \sum_{\phi(q)<c_i}\frac{1}{c_q}\Res_\xi\left(\frac{F_q\cdot h_q}{\prod_{e\in E^q}\alpha(e)}\right)\\
\nonumber = & \int_{\Gamma_{c_i}}\Kir_{c_i}(F)\cdot\Kir_{c_i}(h).
\end{align}
Since \eqref{eq:SurjKirwan3} holds for all $h\in H(\Gamma,\alpha)$ it follows that $\Kir_{c_i}(F)=f_i$, and hence $\Kir_{c_i}$ is surjective as claimed.
\end{proof}

\begin{corollary}
\label{cor:HGRing}
The submodule $H(\Gamma_c)\subseteq\Maps(V_c,S_\xi)$ is a subring, i.e. it is multiplicitively closed in $\Maps(V_c,S_\xi)$.
\end{corollary}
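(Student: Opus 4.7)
The plan is to derive the corollary directly from the surjectivity of the Kirwan map (Proposition \ref{prop:SurjKirwan}) together with the defining property of $H(\Gamma_c)$, using the elementary fact that $\Kirc$ is a ring homomorphism.

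First I would observe that the map $\Kirc\colon \Maps(V_\Gamma,S)\rightarrow \Maps(V_c,S_\xi)$, defined vertex-wise by $f\mapsto (e\mapsto \rho_e(f(i(e))))$, is a ring map because each $\rho_e\colon S\rightarrow S_\xi$ is a ring homomorphism (being the symmetric algebra map induced by the linear projection). So $\Kirc$ restricted to $H(\Gamma,\alpha)$ lands in $H(\Gamma_c)$ and respects multiplication.

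Now fix $f_1,f_2\in H(\Gamma_c)$. By Proposition \ref{prop:SurjKirwan} the Kirwan map $\Kirc\colon H(\Gamma,\alpha)\rightarrow H(\Gamma_c)$ is surjective, so choose $F_1\in H(\Gamma,\alpha)$ with $\Kirc(F_1)=f_1$. For any $h\in H(\Gamma,\alpha)$ the product $F_1\cdot h$ is again in $H(\Gamma,\alpha)$, and using that $\Kirc$ is a ring homomorphism we compute
\begin{equation*}
\int_{\Gamma_c} (f_1\cdot f_2)\cdot \Kirc(h) \;=\; \int_{\Gamma_c} f_2\cdot \Kirc(F_1)\cdot \Kirc(h) \;=\; \int_{\Gamma_c} f_2\cdot \Kirc(F_1\cdot h).
\end{equation*}
Since $f_2\in H(\Gamma_c)$, the defining property in Definition \ref{def:Hc} applied to the class $F_1\cdot h\in H(\Gamma,\alpha)$ tells us that the right-hand side lies in $S_\xi$. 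As $h$ was arbitrary, this shows $f_1\cdot f_2\in H(\Gamma_c)$, proving that $H(\Gamma_c)$ is closed under the vertex-wise product inherited from $\Maps(V_c,S_\xi)$.

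There is no real obstacle here, as the proof is entirely formal; the substantive content has already been packaged into Proposition \ref{prop:SurjKirwan}. The only points to double-check are that $\Kirc$ is indeed multiplicative (immediate from the local definition via $\rho_e$) and that surjectivity of $\Kirc$ lets us replace an arbitrary element $f_1\in H(\Gamma_c)$ by the Kirwan image of a class in $H(\Gamma,\alpha)$, which is exactly what converts the "bilinear test" defining $H(\Gamma_c)$ into the desired statement about the product $f_1\cdot f_2$.
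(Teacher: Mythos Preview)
Your proof is correct and rests on the same two ingredients as the paper's: that $\Kirc$ is a ring homomorphism and that it is surjective by Proposition \ref{prop:SurjKirwan}. The paper's argument is slightly more streamlined---since the image of a ring homomorphism is automatically a subring, surjectivity immediately gives that $H(\Gamma_c)$ is a subring---whereas you lift only one factor and verify the defining integral condition directly; but this is a cosmetic difference, not a substantive one.
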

\begin{proof}
The Kirwan map $\Kirc$ is a ring homomorphism onto its image.  By Proposition \ref{prop:SurjKirwan} its image is $H(\Gamma_c)$.
\end{proof}

\subsection{Proof of Theorem \ref{thm:GZMain}}
The following lemma will make short work of the proof of Theorem \ref{thm:GZMain}.  First we fix some notation.

Fix a $\phi$-regular value $c$, and let $p\in V_\Gamma$ be the largest vertex such that $\phi(p)<c$.  Let $r\coloneqq\ind_\xi(p)$.  Label the edges at $p$, $E^p=\left\{e_1,\ldots,e_r,e_{r+1},\ldots,e_d\right\}$ such that $\Delta_c^-\coloneqq\left\{e_{r+1},\ldots,e_d\right\}$.  We use the shorthand $\beta_\ell=\beta_{e_\ell}\in S_\xi$.

\begin{lemma}
\label{lem:SurjKMorse}
If the Kirwan map $\Kirc\colon H(\Gamma,\alpha)\rightarrow H(\Gamma_c)$ is surjective, then the vertex $p$ has a weak generating class.
\end{lemma}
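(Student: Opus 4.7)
\noindent\emph{Plan.} The idea is to write down, as the Kirwan image of the weak generating class we are trying to build, an explicit element $f\in H(\Gamma_c)$ supported on $\Delta_c^-=E^p_+$, and then pull it back via the hypothesized surjectivity of $\Kirc$. If a weak generating class $\tau_p$ with $\tau_p(p)=\prod_{i=1}^r\alpha(e_i)$ and $\tau_p(v)=0$ for $\phi(v)<\phi(p)$ existed, its Kirwan image would be forced (because $p$ is the largest vertex below $c$, so $e\in V_c$ with $i(e)\neq p$ has $\phi(i(e))<\phi(p)$) to be
\[
f(e)=\begin{cases}\rho_e\!\left(\prod_{i=1}^r\alpha(e_i)\right) & e\in\Delta_c^-,\\ 0 & e\in V_c\setminus\Delta_c^-.\end{cases}
\]
First I would verify $f\in H(\Gamma_c)$: for any $h'\in H(\Gamma,\alpha)$ the integral $\int_{\Gamma_c}f\cdot\Kirc(h')$ only runs over $E^p_+$, and the $\rho_e(\alpha(e_i))$ factors from $f$ cancel the corresponding denominator factors coming from the $e_i\in E^p_-$. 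After this cancellation, Theorem \ref{thm:Residue} identifies the remaining sum with
\[
\frac{1}{c_p}\,\Res_\xi\!\left(\frac{h'(p)}{\prod_{e\in E^p_+}\alpha(e)}\right)\in S_\xi.
\]

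\medskip

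\noindent Next I would invoke surjectivity of $\Kirc$ (using gradedness and that $f$ is homogeneous of degree $r$) to obtain $h\in H(\Gamma,\alpha)$ homogeneous of degree $r$ with $\Kirc(h)=f$. Granting the support condition $h(v)=0$ for all $\phi(v)<\phi(p)$, the normalization $h(p)=\prod_{i=1}^r\alpha(e_i)$ follows quickly: the edge conditions of $H(\Gamma,\alpha)$ at the down-edges $e_1,\ldots,e_r$ of $p$ give $\alpha(e_j)\mid h(p)$ for every $j$ (since $h$ vanishes at the other endpoint of $e_j$), hence $\prod_{j=1}^r\alpha(e_j)\mid h(p)$; since $\deg h(p)=r$ equals the degree of this product, $h(p)=R\prod_j\alpha(e_j)$ for a scalar $R\in\R$, and the relation $\rho_e(h(p))=\rho_e(\prod_j\alpha(e_j))$ at any $e\in E^p_+$, together with the distinctness of the $\beta_e$'s enforced by genericity of $\xi$, pins down $R=1$.

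\medskip

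\noindent\emph{Main obstacle.} The crux is verifying $h(v)=0$ for every $v$ with $\phi(v)<\phi(p)$. The constraint $\Kirc(h)=f$ only yields $\prod_{e\in E^v\cap V_c}\alpha(e)\mid h(v)$; this forces $h(v)=0$ by the degree bound $\deg h(v)=r<d$ when $v$ has more than $r$ up-crossing edges, but not in general, when $v$ has several down-edges inside $V_{<c}$. To handle the residual cases I would modify $h$ within its coset $h+\ker\Kirc$. Computing $\int_{\Gamma_c}\Kirc(h)\cdot\Kirc(h')$ in two ways, via Lemma \ref{prop:KirwanIntegral} on the one hand and via the Step-$1$ evaluation of $\int_{\Gamma_c}f\cdot\Kirc(h')$ on the other, produces the orthogonality
\[
\frac{1}{c_p}\,\Res_\xi\!\left(\frac{(h(p)-\prod_{i=1}^r\alpha(e_i))\,h'(p)}{\prod_{e\in E^p}\alpha(e)}\right)+\sum_{\phi(q)<\phi(p)}\frac{1}{c_q}\,\Res_\xi\!\left(\frac{h(q)\,h'(q)}{\prod_{e\in E^q}\alpha(e)}\right)=0
\]
valid for every $h'\in H(\Gamma,\alpha)$. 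Combining this identity with Proposition \ref{prop:StrDuality}, and sweeping through the vertices $v<p$ in increasing $\phi$-order (at each step absorbing the undesired value $h(v)$ into a suitable element of $\ker\Kirc$ whose values on $V_{>c}$ are produced by re-applying Kirwan surjectivity), should yield the desired modification of $h$ into a weak generating class for $p$; getting this inductive cancellation to go through cleanly is the step I expect to require the most care.
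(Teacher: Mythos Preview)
Your first two steps --- defining $f$ supported on $\Delta_c^-$ and verifying $f\in H(\Gamma_c)$ via a residue computation --- coincide with the paper's argument (the paper's $F$ is your $f$ divided by the constant $M_p=\prod_{i=1}^r m_{e_i}$). The divergence comes afterwards, and your ``main obstacle'' is in fact self-imposed.

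The paper does \emph{not} try to arrange that the preimage $T$ (your $h$) vanish at vertices below $p$; it simply discards the values of $T$ below $p$ and defines $\tau_p$ by hand:
\[
\tau_p(x)=\begin{cases}\prod_{i=1}^r\alpha(e_i)&x=p,\\ M_p\,T(x)&\phi(x)>\phi(p),\\ 0&\phi(x)<\phi(p).\end{cases}
\]
One then checks the edge condition for $\tau_p$ directly, and this is short: edges entirely above or entirely below $p$ are immediate; for an edge $xy\in V_c\setminus\Delta_c^-$ one has $\rho_{xy}(T(y))=\Kirc(T)(xy)=0$, hence $\alpha(xy)\mid T(y)$; for a down-edge $e_i$ at $p$ the value $\tau_p(p)$ is visibly divisible by $\alpha(e_i)$; and for an up-edge $e_a\in\Delta_c^-$ one has $\rho_{e_a}\bigl(M_pT(t(e_a))\bigr)=M_pF(e_a)=\rho_{e_a}\bigl(\prod_i\alpha(e_i)\bigr)$, so the difference lies in $\ker\rho_{e_a}=\alpha(e_a)\cdot S$. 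The values of $T$ below $p$ are never used.

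By contrast, your inductive scheme to modify $h$ within $h+\ker\Kirc$ is not substantiated: to kill an unwanted value $h(v)$ at some $v$ below $p$ you would need a degree-$r$ class $g\in\ker\Kirc$ with a prescribed value at $v$, and nothing in the orthogonality identity you wrote down produces such a $g$. Invoking Kirwan surjectivity again only manufactures classes with prescribed Kirwan image on $V_c$, not classes with prescribed values at vertices strictly below $c$; closing that loop would seem to require generating classes at those lower vertices already. So the route you sketch has a genuine gap, while the paper's direct construction sidesteps the issue entirely.
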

\begin{proof}
Suppose $\Kirc\colon H(\Gamma,\alpha)\rightarrow H(\Gamma_c)$ is surjective.  Consider the map $F\colon V_c\rightarrow S_\xi$ defined by 
$$F(e_a)=\begin{cases} \prod_{i=1}^r\beta_a-\beta_i & \text{if $e_a\in \Delta_c^-$}\\
0 & \text{otherwise}.\\
\end{cases}$$
Note that for any $h\in H(\Gamma,\alpha)$ we have
\begin{align*}
\int_{\Gamma_c}F\cdot\Kirc(h)= & \sum_{a=r+1}^d\frac{\prod_{i=1}^r(\beta_a-\beta_i)\cdot \rho_{e_a}\left(h_p(x,\y)\right)}{c_{p}m_a\prod_{\substack{j=1\\ j\neq a\\}}^dm_j(\beta_a-\beta_j)}\\
= & \frac{1}{c_pM_p}\sum_{a=r+1}^d\frac{h_p(\beta_a,\y)}{\prod_{\substack{j=r+1\\ j\neq a\\}}^d(\beta_a-\beta_j)}\\
= & \frac{1}{c_p}\Res_\xi\left(\frac{h_p(x,\y)}{\prod_{j=r+1}^d\alpha(e_a)}\right).
\end{align*}
In particular, we see that $F\in H(\Gamma_c)$.  By our surjectivity assumption, there is some homogeneous equvariant class $T\in H(\Gamma,\alpha)$ of degree $r$ such that $F=\Kirc(T)$.  Now define $\tau_p\colon V_\Gamma\rightarrow S$ by the rule
\begin{equation}
\label{eq:pGenClass}
\tau_p(x)=\begin{cases}
\prod_{i=1}^r\alpha(e_i) & \text{if $x=p$}\\
M_pT(x) & \text{if $\phi(x)>\phi(p)$}\\
0 & \text{if $\phi(x)<\phi(p)$},\\
\end{cases}
\end{equation}
where as above $M_p\coloneqq\prod_{i=1}^rm_{e_i}$.
Clearly if $\tau_p$ were an equivariant class in $H(\Gamma,\alpha)$ it would have to be a weak generating class for $p$.  Therefore it suffices to show that for each $xy\in E_\Gamma$ there is some $c_{xy}\in S$ such that 
\begin{equation}
\label{eq:CC}
\tau_p(y)-\tau_p(x)=c_{xy}\cdot\alpha(xy).
\end{equation} 
Fix any oriented edge $xy\in E_\Gamma$.  We may assume without loss of generality that $\phi(x)<\phi(y)$.  Certainly \eqref{eq:CC} is satisfied for some $c_{xy}\in S$ if $\phi(x)<\phi(y)<\phi(p)$ or if $\phi(p)<\phi(x)<\phi(y)$.  If $\phi(x)<\phi(p)<\phi(y)$ then $xy\in V_c\setminus\Delta_c^-$, and we have 
$$\tau_p(y)-\tau_p(x)=\tau_p(y)=M_pT(y).$$
Since $\Kirc(T)(xy)=\rho_{xy}(T(y))=0$, we see that $T(y)$ must indeed be a multiple of $\alpha(xy)$ since it is in the kernel of the map $\rho_{xy}\colon S\rightarrow S_\xi$. 
If $\phi(y)=\phi(p)$ then $xy\in\left\{e_1,\ldots,e_r\right\}$, then $\tau_p(y)-\tau_p(x)=\tau_p(p)=\prod_{i=1}^r\alpha(e_i)$ is a multiple of $\alpha(xy)$.
Finally if $\phi(x)=\phi(p)$ then $xy\in\Delta_c^-$, and we have 
$$\tau(y)-\tau(x)=M_pT(y)-\prod_{i=1}^r\alpha(e_i).$$
In this case notice that 
$$\rho_{xy}(M_pT(y))=M_p\prod_{i=1}^r\beta_{xy}-\beta_i=\prod_{i=1}^rm_{e_i}(\beta_{xy}-\beta_i)=\rho_{xy}\left(\prod_{i=1}^r\alpha(e_i)\right)$$
which again implies that the difference $\tau_p(y)-\tau_p(x)$ is a multiple of $\alpha(xy)$.  
Thus $\tau_p$ is a class in $H(\Gamma,\alpha)$, as claimed.
\end{proof}

\begin{proof}[Proof of Theorem \ref{thm:GZMain}]
Assume that $(\Gamma,\alpha,\theta,\lambda)$ has the Morse package.  Fix a polarizing covector $\xi\in\left(\R^n\right)^*$, a compatible Morse function $\phi\colon V_\Gamma\rightarrow\R$, and a generating family $\left\{\tau_p\right\}_{p\in V_\Gamma}$.  As we have already noted for the proof of Proposition \ref{prop:Thm6.1}, $\xi$, resp. $\phi$, restricts to a polarizing covector, resp. a Morse function, on every $2$-slice.  Since the restriction of an equivariant class to a subskeleton is obviously an equivariant class of the subskeleton, we deduce that the restriction of the generating class to a $2$-slice is a generating class for that $2$-slice.  Thus every $2$-slice inherits the Morse package from $(\Gamma,\alpha,\theta,\lambda)$.

Now assume that every $2$-slice of $(\Gamma,\alpha,\theta,\lambda)$ has the Morse package.  Then by Proposition \ref{prop:SurjKirwan}, the Kirwan maps $\Kirc\colon H(\Gamma,\alpha)\rightarrow H(\Gamma_c)$ are surjective.  Then Lemma \ref{lem:SurjKMorse} implies that every vertex $p\in V_\Gamma$ has a weak generating class.  Thus by Proposition \ref{prop:MorsePackage}, we deduce that $(\Gamma,\alpha,\theta,\lambda)$ has the Morse package.
\end{proof}

\section{Concluding Remarks}
\label{sec:Comments}
Theorem \ref{thm:GZMain} says that in order to understand 1-skeleta with the Morse package, it is enough to look at 1-skeleta in $\R^2$ or \emph{planar 1-skeleta}.  It is an open problem to classify those planar 1-skeleta which have the Morse package.  Here are a few remarks in this direction.

A planar 1-skeleton is called \emph{noncyclic} if it satisfies the acyclicity axiom and is pointed.  It follows from Proposition \ref{prop:StMor} and the discussion preceding it that a $d$-valent planar 1-skeleton with the Morse package must be noncyclic and straight.  Note that any noncyclic 1-skeleton has a generating class in degree zero, namely the constant map $p\mapsto 1$.  Moreover, since vertices and edges support Thom classes, any non-cyclic 1-skeleton must also admit generating classes in degrees $d$ and $d-1$.  It turns out that if our non-cyclic 1-skeleton is straight and planar then we can do slightly better.
\begin{proposition}
\label{prop:d2valent}
Every noncyclic planar straight 1-skeleton admits generating classes in degree $d-2$.  
\end{proposition}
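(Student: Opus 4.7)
The plan is to produce, for each vertex $p\in V_\Gamma$ with $\ind_\xi(p)=d-2$, a generating class $\tau_p$ of degree $d-2$ as the Thom class of a suitable $2$-valent subskeleton of $\Gamma$.  Write $E^p_+=\{e_1,e_2\}$ for the two flow-out edges at $p$.  The natural candidate is the ``$2$-orbit'' $\Gamma_p^0$ of the pair $(p,E^p_+)$ under the connection: starting with $p$ together with the edges $e_1,e_2$, at each new vertex $q=t(f)$ I adjoin $\bar f$ together with the image $\theta_f(f')\in E^q$ of the other currently-distinguished edge $f'\in E^{i(f)}$.  Since $\Gamma$ is finite the process closes into a connected $2$-valent subgraph on which $\theta$ restricts, i.e.\ a $2$-valent subskeleton of $\Gamma$.

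If $\Gamma_p^0$ can be shown to be normally straight, Proposition \ref{prop:ThomNormal} yields a Thom class $\tau_p\in H^{d-2}(\Gamma,\alpha)$, and the Thom class formula gives $\tau_p(p)=\prod_{e\in N^p_0}\alpha(e)=\prod_{e\in E^p_-}\alpha(e)$ automatically --- exactly the boundary value required of a generating class for $p$ of degree $d-2$.  The factorization $|K_\gamma|=|K_\gamma^0|\cdot|K_\gamma^\perp|$ combined with the straightness of $\Gamma$ (which gives $|K_\gamma|=1$) reduces normal straightness of $\Gamma_p^0$ to the statement that the internal holonomy $|K_\gamma^0|$ around the cycle equals $1$.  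I would verify this by a direct telescoping argument using that, in the planar setting, the axial vectors at each vertex span $\R^2$ and the compatibility constants at consecutive vertices around the cycle are linked by the coplanarity relations.

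The main obstacle is the containment $V_{\Gamma_p^0}\subseteq\mathcal{F}_p$, without which the support of $\tau_p$ fails to lie in the flow-up and $\tau_p$ is not a generating class.  Because both $e_1,e_2$ flow out, $p$ is a source of $\Gamma_p^0$ in the inherited $\xi$-acyclic orientation, and the plan is to argue further that $p$ is the unique $\phi$-minimum of the cycle, so that every other vertex is reachable from $p$ by an oriented path in $\Gamma_p^0\subseteq\Gamma$ and therefore lies in $\mathcal{F}_p$.  Planarity is essential here: in $\R^2$ the pair $\alpha(e_1),\alpha(e_2)$ spans the ambient, and the coplanarity relations $\alpha(e')\equiv\lambda_e(e')\,\alpha(\theta_e(e'))\pmod{\alpha(e)}$ with $\lambda_e(e')>0$ constrain the sign of $\langle\xi,\alpha(\theta_e(e'))\rangle$ step by step along the $2$-orbit, while noncyclicity of $\Gamma$ prevents the orbit from dipping below $p$ and then closing back up.  Making this rigorous, most naturally by an induction on the position around the $2$-orbit while tracking the evolution of $\langle\xi,\alpha(\cdot)\rangle$ at each new vertex, will be the crux of the argument.
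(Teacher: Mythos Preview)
Your approach diverges from the paper's in the choice of $2$-valent subgraph, and this is where the gap lies. You take the connection-generated $2$-orbit $\Gamma_p^0$ of $(p,E^p_+)$, which is a subskeleton by construction and would let you invoke Proposition~\ref{prop:ThomNormal} once normal straightness is checked; your telescoping argument for $|K_\gamma^0|=1$ via a wedge-product identity would indeed go through in $\R^2$. But the containment $V_{\Gamma_p^0}\subseteq\mathcal F_p$ is not merely the crux --- it is unclear that it holds at all. Noncyclicity says only that every $2$-\emph{slice} is pointed, and in $\R^2$ the only $2$-slice is $\Gamma$ itself; your connection $2$-orbit is not a $2$-slice. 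The coplanarity relation $\alpha(e_2)-\lambda_{e_1}(e_2)\,\alpha(\theta_{e_1}(e_2))\in\R\cdot\alpha(e_1)$ imposes no sign constraint on $\langle\xi,\alpha(\theta_{e_1}(e_2))\rangle$ even when both $\langle\xi,\alpha(e_i)\rangle>0$, so your inductive sign-tracking plan has no local mechanism to prevent the orbit from dipping below $p$, and you have not supplied a global one.

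The paper reverses the trade-off: it builds $\Gamma_p$ as two \emph{$\xi$-oriented} paths from $p$, followed until they first meet (they must, since noncyclicity gives a unique sink), so $V_p\subseteq\mathcal F_p$ by construction. The price is that this $\Gamma_p$ is generally \emph{not} a subskeleton --- the connection need not restrict to it --- so Proposition~\ref{prop:ThomNormal} is unavailable. Instead the paper constructs $\tau_p$ by hand: planarity guarantees that at each vertex $v_i$ of the cycle the two cycle-edge directions span $\R^2$, yielding congruences between $\prod_{e\in N^{v_i}_p}\alpha(e)$ and $\prod_{e\in N^{v_{i+1}}_p}\alpha(e)$ modulo $\alpha(v_iv_{i+1})$; the multiplicative constants are ratios of wedge products and, combined with straightness of $\Gamma$, telescope to $1$ around the cycle. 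This works for \emph{any} $2$-valent cycle in a planar straight $1$-skeleton, subskeleton or not --- precisely the freedom needed to choose one lying in $\mathcal F_p$ from the outset.
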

\begin{proof}
Let $(\Gamma,\alpha,\theta,\lambda)$ be a noncyclic planar straight 1-skeleton, let $\xi\in\left(\R^2\right)^*$ be any polarizing covector, let $p\in V_\Gamma$ be any vertex of index $d-2$, and let $\mathcal{F}_p$ denote the flow up at $p$.  Then there exists a $2$-valent subgraph $\Gamma_p=(V_p,E_p)$ containing $p$ such that $V_p\subseteq\mathcal{F}_p$.  Indeed to find such a graph $\Gamma_p$, one can simply take two oriented paths starting from $p$ (which exist since $\ind_\xi(p)=d-2$) and follow them until they meet (which they must since the orientation on $\Gamma$ must have a unique sink by the noncyclic assumption).

Label the vertices $V_p$ in cyclic order, say $v_1,\ldots,v_n$, so that $v_1=p$ and $v_iv_{i+1}\in E_p$.  For each $x\in V_p$, set $N_p^x$ to be the oriented edges at $x$ normal to $\Gamma_p$.  Then for each $1\leq i\leq n$ we have 
\begin{equation}
\label{eq:d21}
\prod_{\substack{e\in E^{v_i}\\e\neq v_{i}v_{i+1}\\}}\alpha(e)\equiv \left|K_{v_iv_{i+1}}\right|\cdot\prod_{\substack{e\in E^{v_{i+1}}\\e\neq v_{i+1}v_i\\}}\alpha(e) \ \ \ mod \ \alpha(v_iv_{i+1}).
\end{equation}
Note that $\alpha(v_iv_{i+1})$ and $\alpha(v_iv_{i-1})$ are a basis for $\R^2$, hence for each $1\leq i\leq n$ we can find real numbers $\lambda_i$ such that
\begin{equation}
\label{eq:d22}
\alpha(v_iv_{i-1})\equiv\lambda_i\alpha(v_{i+1}v_{i+2}) \ \ \ mod \ \alpha(v_iv_{i+1}).
\end{equation}
Dividing these two congruences we get that 
\begin{equation}
\label{eq:d23}
\prod_{e\in N^{v_i}_p}\alpha(e)\equiv\frac{\left|K_{v_iv_{i+1}}\right|}{\lambda_i}\prod_{e\in N^{v_{i+1}}_p}\alpha(e).
\end{equation}
Note that from \eqref{eq:d22}, the $\lambda_i$'s can be computed as quotients of exterior products, i.e.
\begin{equation}
\label{eq:d24}
\lambda_i=\frac{\alpha(v_iv_{i-1})\wedge\alpha(v_iv_{i+1})}{\alpha(v_{i+1}v_i)\wedge\alpha(v_{i+1}v_{i+2})}
\end{equation}
where $x\wedge y$ denotes the product in the exterior algebra $\bigwedge(\R^2)$.  From \eqref{eq:d24} we see that the product $\lambda_1\cdots\lambda_n$ must equal one.  Hence by straightness, we must also have  
\begin{equation}
\label{eq:d25}
\prod_{i=1}^n\frac{\left|K_{v_iv_{i+1}}\right|}{\lambda_i}=1.
\end{equation}
Now define $M_1\coloneqq 1$ and for $2\leq i\leq n$, define 
$$M_i\coloneqq \prod_{j=1}^{i-1}\frac{\left|K_{v_jv_{j+1}}\right|}{\lambda_j},$$
and define the map $\tau_p\colon V_\Gamma\rightarrow S$ by the formula
$$\tau_p(x)=\begin{cases}
M_i\prod_{e\in N_p^{v_i}}\alpha(e) & \text{if $x=v_i$}\\
0 & \text{otherwise}\\
\end{cases}$$
To finish the proof, we need only show that $\tau_p\in H(\Gamma,\alpha)$, i.e. for every oriented edge $xy\in E_\Gamma$
\begin{equation}
\label{eq:d26}
\tau_p(y)-\tau_p(x)\equiv 0 \ \ \ mod \ \alpha(xy).
\end{equation}
There are three cases to consider:  $xy\in E_p$, $xy\in N_p$, or neither.  If $xy$ is in neither $E_p$ nor $N_p$, then the difference on the LHS of \eqref{eq:d26} is zero, hence the equivalence is satisfied.  If $xy\in N_p$ then both $\tau_p(x)$ and $\tau_p(y)$ are multiples of $\alpha(xy)$ and again the equivalence \eqref{eq:d26} is satisfied.  Finally if $xy\in E_p$ then we may assume that $x=v_i$ and $y=v_{i+1}$ for some $1\leq i\leq N$, where of course $v_{N+1}\coloneqq v_1$.  The key observation to make here is that for every $1\leq i\leq N$ we have 
$$M_{i+1}=\frac{\left|K_{v_iv_{i+1}}\right|}{\lambda_i}\cdot M_i.$$  
Thus the RHS of \eqref{eq:d26} can be written
\begin{equation}
\label{eq:d27}
M_i\left(\frac{\left|K_{v_iv_{i+1}}\right|}{\lambda_i}\prod_{e\in N_p^{v_{i+1}}}\alpha(e)-\prod_{e\in N_p^{v_i}}\alpha(e)\right),
\end{equation} 
and of course the equivalence in \eqref{eq:d26} now follows from the equivalence in \eqref{eq:d23}.  Therefore $\tau_p$ is a generating class for $p$.
\end{proof}
In particular Proposition \ref{prop:d2valent} allows a nice characterization of the Morse package for planar 1-skeleta with small valency.  
\begin{corollary}
\label{cor:3valent}
Every noncyclic planar straight $3$-valent 1-skeleton has the Morse package.
\end{corollary}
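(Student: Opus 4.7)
The plan is to produce a weak generating family for $(\Gamma,\alpha,\theta,\lambda)$ and then invoke Proposition \ref{prop:MorsePackage}. Since the skeleton is $3$-valent, the possible indices of vertices with respect to a fixed generic polarizing covector $\xi$ are $0,1,2,3$, and it suffices to treat these four cases separately.

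First, because $(\Gamma,\alpha,\theta,\lambda)$ is noncyclic it is pointed, so $b_0(\Gamma,\alpha)=b_3(\Gamma,\alpha)=1$; let $p_0$ denote the source and $p_1$ the sink. The constant map $V_\Gamma\ni v\mapsto 1\in S$ is an equivariant class of degree $0$ whose support is all of $V_\Gamma$ but in particular equals $\prod_{e\in E^{p_0}_-}\alpha(e)=1$ at $p_0$, hence is a (weak) generating class for $p_0$. Dually, the Thom class $T_{p_1}$ of the sink, defined by $T_{p_1}(p_1)=\prod_{e\in E^{p_1}}\alpha(e)$ and zero elsewhere, exists (vertices always support Thom classes) and satisfies $\supp(T_{p_1})=\{p_1\}\subseteq F_{p_1}$ together with $T_{p_1}(p_1)=\prod_{e\in E^{p_1}_-}\alpha(e)$ since every edge at $p_1$ flows in; so it is a (weak) generating class for $p_1$ in degree $3$.

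Next, vertices of index $1$ are precisely those of index $d-2$, so Proposition \ref{prop:d2valent} supplies generating classes (in particular weak generating classes) for them. It remains to handle vertices of index $2=d-1$. For such a vertex $p$ there is a unique outgoing edge $pq\in E^p_+$, and the other two edges at $p$ make up $E^p_-$. By Propositions \ref{prop:ksliceNormStr} and \ref{prop:ThomNormal} every edge supports a Thom class; choose $\sigma_{pq}\in H^2(\Gamma,\alpha)$ as in \eqref{eq:EdgeClass}. Then
\[
\sigma_{pq}(p)=\prod_{\substack{e\in E^p\\ e\neq pq\\}}\alpha(e)=\prod_{e\in E^p_-}\alpha(e),
\]
while $\supp(\sigma_{pq})=\{p,q\}\subseteq F_p$ since $\phi(q)>\phi(p)$. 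Hence $\sigma_{pq}$ is a weak generating class for $p$ of degree $2$.

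Assembling these classes yields a weak generating family $\{\tau_p\}_{p\in V_\Gamma}$. By Proposition \ref{prop:MorsePackage} this is equivalent to $(\Gamma,\alpha,\theta,\lambda)$ having the Morse package. No step poses a genuine obstacle: the only nontrivial input is Proposition \ref{prop:d2valent}, whose proof is already carried out, and the low valency $d=3$ allows the remaining indices $0,1,2,3$ to be dispatched by constants, vertex Thom classes, and edge Thom classes respectively.
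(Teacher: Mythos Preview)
Your proof is correct and follows exactly the approach the paper intends: the paper does not spell out a proof of the corollary, but the preceding paragraph already records that any noncyclic 1-skeleton has generating classes in degrees $0$, $d-1$, and $d$ (via the constant class, edge Thom classes, and vertex Thom classes respectively), and Proposition~\ref{prop:d2valent} supplies degree $d-2$; for $d=3$ this covers every index. Your write-up simply makes these four cases explicit and packages them via Proposition~\ref{prop:MorsePackage}, which is precisely the intended argument.
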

  
One might naively guess that noncyclic and straight are sufficient conditions for the Morse package in higher valencies, but this is not the case as the following example shows.

Let $P\subseteq\R^2$ be a regular $7$-gon with vertices labeled in cyclic order $p_1,\ldots,p_7$.  Let $\Gamma=(V_\Gamma,E_\Gamma)$ be the graph with $V_\Gamma=\left\{p_1,\ldots,p_7\right\}$ and edges $E_\Gamma=\left\{p_ip_{i\pm1}, \ p_ip_{i\pm 3}\right\}$ where the indices are understood modulo $7$.  For each $1\leq i\leq 7$ let $\p_i$ denote the position vector of vertex $p_i$.  An axial function $\alpha\colon E_\Gamma \rightarrow\R^2$ is then defined by $\alpha(p_ip_j)\coloneqq \p_j-\p_i$.  Reflections across the edges of $\Gamma$ define a connection $\theta$ for the pair $(\Gamma,\alpha)$ and with this connection, one can check the the compatibility constants are all equal to one.  Hence the resulting $4$-valent planar 1-skeleton $(\Gamma,\alpha,\theta,\lambda)$ is straight (it is GKM) and also noncyclic, since its vertices are in convex position.  See Figure \ref{fig:cr}.  Note however that $(\Gamma,\alpha,\theta,\lambda)$ cannot have the Morse package.  Indeed if it did have the Morse package then, taking $\xi=(0,1)\in\left(\R^2\right)^*$, a generating class for $p_5$ would need to be a degree one class supported on vertices $p_1,\ldots,p_5$.  On the other hand, any degree one class in $H(\Gamma,\alpha)$ that is zero on $p_6$ and $p_7$ must be identically zero.  
\begin{figure}
  \includegraphics{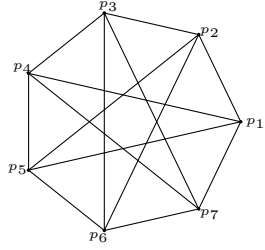}
\caption{No Morse Package}
\label{fig:cr}       
\end{figure}

The Morse package on a 1-skeleton implies that its equivariant cohomology is a free module over a polynomial ring.  It seems natural to ask for necessary and sufficient conditions for the equivariant cohomology of a 1-skeleton to be a free module over the polynomial ring $S$ in general.  Recently Luo \cite{Luo} has proved that the equivariant cohomology of any planar 1-skeleton is always free.  Due to Luo's result and perhaps a lack of counter examples, one might venture to guess that the equivariant cohomology of any 1-skeleton in $\R^n$ is always a free module over the polynomial ring $\Sym(\R^n)$.

\bibliographystyle{plain}
\bibliography{morse}

\end{document}